\documentclass[12pt]{amsart}

\usepackage[english]{babel}

\usepackage[symbol]{footmisc}

\usepackage[margin=1in,footskip=.2in]{geometry}\usepackage{amsmath,amssymb,amscd,amsfonts,mathtools,tikz,caption,float,tikz-cd, enumitem}
\usetikzlibrary{patterns}
\usepackage[colorlinks = true,
            linkcolor = blue,
            urlcolor  = blue,
            citecolor = blue,
            anchorcolor = blue]{hyperref}
\usepackage[capitalise]{cleveref}
\crefname{equation}{}{}
\usepackage{mathptmx}
\usepackage{mathrsfs}
\usepackage{comment}


\newtheorem{thm}{Theorem}[section]
\newtheorem{lem}[thm]{Lemma}
\newtheorem{defn}[thm]{Definition}
\newtheorem{prop}[thm]{Proposition}
\newtheorem{coro}[thm]{Corollary}
\newtheorem{rmk}[thm]{Remark}

\newtheorem{notation}[thm]{Notation}

\numberwithin{equation}{section}

\newcommand{\norm}[1]{\left\Vert#1\right\Vert}
\newcommand{\abs}[1]{\left\vert#1\right\vert}

\newcommand{\eps}{\varepsilon}

\newcommand{\R}{\mathbb{R}}

\newcommand{\Q}{\mathbb{Q}}
\newcommand{\Z}{\mathbb{Z}}
\newcommand{\N}{\mathbb{N}}

\newcommand{\PP}{{\mathbb{P}}}

\newcommand{\origin}{O}
\newcommand{\sgcd}{S\text{-}\mathrm{gcd}}
\newcommand{\one}{\mathbf{1}}

\newcommand{\SL}{\mathrm{SL}}
\newcommand{\GL}{\mathrm{GL}}

\newcommand{\G}{\Gamma}

\newcommand{\UL}{\mathrm{UL}}

\newcommand{\Mat}{\mathrm{Mat}}

\newcommand{\Id}{\mathrm{Id}}

\newcommand{\SG}{\mathsf{G}}

\newcommand{\SA}{\mathsf{A}}
\newcommand{\SK}{\mathsf{K}}

\newcommand{\sg}{\mathsf{g}}
\newcommand{\sh}{\mathsf{h}}

\newcommand{\su}{\mathsf{u}}

\newcommand{\T}{{\mathsf{T}}}

\renewcommand{\t}{\mathsf{t}}



\newcommand{\sx}{\mathsf x}
\newcommand{\sa}{\mathsf a}
\renewcommand{\sb}{\mathsf b}
\renewcommand{\sc}{\mathsf c}
\newcommand{\sd}{\mathsf d}
\newcommand{\sv}{\mathsf v}

\newcommand{\sy}{\mathsf y}
\newcommand{\sz}{\mathsf z}

\newcommand{\bx}{{\bf x}}
\newcommand{\by}{{\bf y}}

\newcommand{\be}{{\bf e}}
\newcommand{\bv}{{\bf v}}
\newcommand{\bw}{{\bf w}}

\newcommand{\bp}{{\bf p}}
\newcommand{\bq}{{\bf q}}

\newcommand{\bsv}{\boldsymbol{\mathsf{v}}}
\newcommand{\bsx}{\boldsymbol{\mathsf{x}}}
\newcommand{\bsy}{\boldsymbol{\mathsf{y}}}

\newcommand{\bbm}{\begin{bmatrix}}
\newcommand{\ebm}{\end{bmatrix}}
\newcommand{\bpm}{\begin{pmatrix}}
\newcommand{\epm}{\end{pmatrix}}
\newcommand{\bsm}{\left(\begin{smallmatrix}}
\newcommand{\esm}{\end{smallmatrix}\right)}
\newcommand{\bsbm}{\left[\begin{smallmatrix}}
\newcommand{\esbm}{\end{smallmatrix}\right]}


\providecommand{\vol}{\mathrm{vol}}
\newcommand{\tp}[1]{^\mathrm{t}{#1}}
\renewcommand{\mod}{\mathrm{mod}\;}
\providecommand{\Interval}{\mathrm{I}}
\newcommand{\diag}{\mathrm{diag}}
\newcommand{\supp}{\mathrm{supp}}
\newcommand{\rd}{\mathrm{d}}
\newcommand{\LI}{\Omega}
\newcommand{\LD}{\Omega}

\begin{document}
\title{Mean value theorems for the S-arithmetic primitive Siegel transforms}
\author{Samantha Fairchild}
\address{Eindhoven University of Technology, Department of Mathematics and Computer Science}
\email{\href{matilto:s.k.fairchild@tue.nl}{s.k.fairchild@tue.nl}}
\author{Jiyoung Han\textsuperscript{*}}
\address{Department of Mathematics Education, Pusan National University, Busan 46241, Republic of Korea}
\email{jiyoung.han@pusan.ac.kr}
\thanks{\textsuperscript{*}Corresponding author.}

\begin{abstract}
We develop the theory and properties of primitive unimodular $S$-arithmetic lattices in $\Q_S^d$ by giving integral formulas in the spirit of Siegel’s primitive mean value formula and Rogers' and Schmidt's second moment formulas. When $d=2$, unlike in the real case, functions arising from the $S$-primitive Siegel transform are unbounded, requiring a careful analysis to establish their integrability.

We then use mean value and second moment formulas in three applications. First, we obtain quantitative estimates for counting primitive $S$-arithmetic lattice points. We next establish a quantitative Khintchine--Groshev theorem, which, in the real case, involves counting primitive integer points in $\mathbb{Z}^d$ subject to congruence conditions. 
Finally, we derive an $S$-arithmetic logarithm law for unipotent flows in the spirit of Athreya--Margulis.
These applications follow the spirit of the real case, but require new technical aspects of the proofs, particularly when $d=2$. 
\end{abstract}
\maketitle
\setcounter{tocdepth}{1}

\section{Introduction}\label{sec:introduction}
The main work of this paper is proving second moment formulas for the primitive Siegel integral formula in the $S$-arithmetic setting. The classical Siegel--Veech formula formalizes the idea that the expected number of lattice points in $\R^2$ in the ball of radius $R$ is $\pi R^2$. Namely for $d\geq 2$ the space of unimodular lattices $g\Z^d$ is parametrized by $g\SL_d(\Z)\in\SL_d(\R)/\SL_d(\Z)$, which inherits a Haar probability measure. Given $f:\R^d\to \R$ with compact support, define the \textbf{Siegel transform}
$$\widetilde{f}(g) = \sum_{\bv \in \Z^d-\{0\}} f(g\bv),$$
which counts the number of lattice points in $B(0,R) = \{\bx\in \R^d: \norm{\bx}_2\leq R\}$ when $f= \mathbf{1}_{B(0,R)}.$
The \textbf{Siegel integral formula} \cite{Siegel45} 
$$\int_{\SL_d(\R)/\SL_d(\Z)} \widetilde{f}(g) \,d g = \int_{\R^d} f(\bx)\,d\bx$$
gives the expected value in terms of the Lebesgue volume in $\R^d$.

A natural question from the above expected value formula is asking about higher moments. In this case for $d\geq 3$, $\widetilde{f} \in L^k$ for all  $1\leq k\leq d-1$ provided that $f$ is semicontinuous and of compact support, for instance (see Remark \ref{rmk: range of ftns}). \cite{Rogers55, Schmidt60}, and Rogers gave explicit formulas. These formulas and their applications have been generalized to many different settings including $S$-arithmetic numbers \cite{HLM2017, Han21}, Adelic numbers \cite{Kim22Adelic}, rational points on Grassmanians \cite{Kim22Grass}, as well as affine and congruence lattices \cite{AGH21, EMV15, GKY22}.

The case of $d=2$ is of particular interest, since $\widetilde{f}\:^k$ is not integrable for any $k \ge 2$. However when we consider primitive vectors $P(\Z^d) = \{\bv\in \Z^d: \gcd(\bv)=1\}$, the corresponding 
\textbf{primitive Siegel transform} given by
 \begin{equation}\label{eq:psvt}\widehat{f}(g) = \sum_{\bv\in P(\Z^d)} f(g\bv)\end{equation}
 satisfies $\widehat{f} \in L^k$ for all $k\in \N$  when $d=2$. Moreover we have the \textbf{primitive Siegel integral formula} \cite{Siegel45} for $f: \R^d\to \R$ 
 \begin{equation}\label{eq:primsiegel}\int_{\SL_d(\R)/\SL_d(\Z)} \widehat{f}(g) \,d g = \frac{1}{\zeta(d)}\int_{\R^d} f(\bx)\,d\bx.\end{equation}
 Here $\zeta$ is the Riemann zeta function, which arises since the set of primitive integers has density $\zeta(d)^{-1}$ in $\Z^d$.
The analogous work of Rogers for higher moments in the primitive case was completed by Schmidt \cite{Schmidt60} in the case of $d=2$. The story when $d=2$ has many generalizations and applications most notably in the case of translation surfaces due to the seminal work of Veech providing the analogous statement of \cref{eq:primsiegel} in \cite{Veech98}. More recent work in translation surfaces also includes higher moments with applications from the second moment arising in \cite{ACM19, AFM22, Fai21, BF22Pairs}.

\subsection*{Integral Formulas} Inspired by the works above, we consider the primitive integral formulas in the $S$-arithmetic setting for $d\geq 2$ and for $S$, {where $S$ is the union} of $\{\infty\}$ and finitely many distinct primes $\{p_1,\ldots, p_s\}$. 
The $S$-arithmetic setting is interesting in its own right, as we combine both the Archimedean and finitely many distinct non-Archimedean places when considering possible closures of $\Q$. A key distrinction arises when $d=2$: unlike \eqref{eq:psvt} where $\hat{f}$ is bounded and hence in $L^k(\SL_2(\R)/\SL_2(\Z))$ for any $k$, functions arising from the $S$-primitive Siegel transform are unbounded, thus proving integrability requires careful analysis.

More generally, the $S$-arithmetic subgroups arise naturally when considering finitely generated subgroups of $\GL_d(\overline{\Q})$. The first main result gives a primitive mean value formula over the $S$-arithmetic numbers $\Q_S$, where the 
$S$-arithmetic lattices we consider are parameterized by $\SG_d/\Gamma_d$, where $\SG_d=\SL_d(\Q_S)$ and $\Gamma_d=\SL_d(\Z_S)$. We now state the result for the primitive Siegel transform $\widehat{f}$, similar to the definition in \cref{eq:psvt} where we instead sum over primitive $S$-arithmetic vectors $P(\Z_S^d)$ defined in \cref{sec:primvectors}. Analogous to the real case, the density of the set of primitive integers comes into play with the $S$-arithmetic $\zeta$-function: 
\begin{equation}\label{eq:zetaS}
    \zeta_S(d)=\sum_{\{m\in \N:\gcd(m,p_1\cdots p_s)=1\}}\; \frac{1}{m^d}.
\end{equation}
The statements also use the Haar probability measure $\mu_d$ on $\SG_d/\Gamma_d=\SL_d(\Q_S)/\SL_d(\Z_S)$, and the volume measure $d\bsx$ on $\Q_S^d$ that we define in \cref{sec:notation}. 

\begin{prop}[$S$-arithmetic primitive mean value formula]\label{prop:primmean}
	Let $f \in B_c^{SC}(\Q_S^d) $. For any $d \geq 2$, {the $S$-primitive Siegel transform $\widehat{f}$ defined as in \eqref{def: S-primitive Siegel transform}} is integrable with 
	$$\int_{\SG_d/\Gamma_d} \widehat{f}(\sg\Gamma_d) \,d\mu_d(\sg) = \frac{1}{\zeta_S(d)}\int_{\Q_S^d} f(\bsx)\,d\bsx.$$
\end{prop}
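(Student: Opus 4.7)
The plan is to deduce the primitive mean value formula from the (known) non-primitive $S$-arithmetic Siegel integral formula established in \cite{HLM2017,Han21}, combined with the unique decomposition of nonzero vectors in $\Z_S^d$ into a primitive $S$-arithmetic vector scaled by a positive integer coprime to $p_1,\ldots,p_s$.

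First I would verify integrability. Since $f \in B_c^{SC}(\Q_S^d)$, the pointwise bound $|\widehat{f}| \leq \widetilde{|f|}$ combined with the non-primitive $S$-arithmetic Siegel integral formula
\[
\int_{\SG_d/\Gamma_d} \widetilde{|f|}(\sg\Gamma_d)\, d\mu_d(\sg) \;=\; \int_{\Q_S^d} |f|(\bsx)\, d\bsx \;<\; \infty
\]
yields $\widehat{f} \in L^1(\SG_d/\Gamma_d)$. The main structural step is then to show that the functional $L(f) \defeq \int_{\SG_d/\Gamma_d} \widehat{f}\, d\mu_d$ defines an $\SG_d$-invariant Radon measure on $\Q_S^d$ supported in the full-measure orbit $\SG_d \cdot P(\Z_S^d)$. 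Setting $f_h(\bsx) \defeq f(h\bsx)$ for $h \in \SG_d$, the identity $\widehat{f_h}(\sg) = \widehat{f}(h\sg)$ and left-invariance of $\mu_d$ give $L(f_h) = L(f)$. Since $\SG_d = \prod_{v \in S}\SL_d(\Q_v)$ acts transitively on $\prod_{v \in S}(\Q_v^d \setminus \{0\}) \supset \SG_d \cdot P(\Z_S^d)$ preserving $d\bsx$ for $d \geq 2$, uniqueness of invariant Radon measures on this homogeneous space forces $L(f) = c \int_{\Q_S^d} f\, d\bsx$ for a single constant $c > 0$ independent of $f$.

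To pin down $c$, the plan is to exploit the unique factorization $\bsv = m\bsw$ with $\bsw \in P(\Z_S^d)$ and $m \in \N$ coprime to $p_1\cdots p_s$, which produces
\[
\widetilde{f}(\sg) \;=\; \sum_{\substack{m \in \N \\ \gcd(m,\, p_1\cdots p_s)=1}} \widehat{f_m}(\sg), \qquad f_m(\bsx) \defeq f(m\bsx).
\]
Integrating term by term (absolute convergence again from the non-primitive formula applied to $|f|$), noting that $\int f_m\, d\bsx = m^{-d}\int f\, d\bsx$ because $|m|_S = m$ for such $m$, and applying the non-primitive formula to the left-hand side, one obtains
\[
\int_{\Q_S^d} f\, d\bsx \;=\; \sum_m L(f_m) \;=\; c\,\zeta_S(d)\int_{\Q_S^d} f\, d\bsx,
\]
forcing $c = \zeta_S(d)^{-1}$, as claimed.

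The main technical hurdle lies in the clean handling of the $S$-arithmetic primitive decomposition: the transitivity of $\Gamma_d$ on $P(\Z_S^d)$, the uniqueness of $\bsv = m\bsw$, and the justification for exchanging the sum over $m$ with integration. All three reduce to the correct definition of $P(\Z_S^d)$ from \cref{sec:primvectors} and to the absolute convergence supplied by the non-primitive formula. $L^1$-integrability of $\widehat{f}$ itself is comparatively mild in this setting, in contrast to the higher-moment analogues for $d=2$ treated elsewhere in the paper.
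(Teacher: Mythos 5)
Your proposal is correct and follows essentially the same route as the paper: integrability by domination $|\widehat{f}|\leq\widetilde{|f|}$ together with the non-primitive formula of \cite{HLM2017,Han21}, then Riesz representation plus $\SG_d$-invariance to get $L(f)=c\int f\,d\bsx$ (the paper phrases your "supported in the open orbit" step via the classification of invariant measures as products of $\vol_p$ and $\delta_0$, noting that $\sg P(\Z_S^d)$ has nonzero component at every place), and finally the decomposition $\Z_S^d-\{\origin\}=\bigsqcup_{m\in\N_S} m\,P(\Z_S^d)$ with the Jacobian computation $\int f_m\,d\bsx=m^{-d}\int f\,d\bsx$ to force $c=\zeta_S(d)^{-1}$. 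No substantive differences or gaps.
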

Here $f\in B_c^{SC}(\Q_S^d)$ denotes a bounded semicontinuous function with compact support, see \cref{sec:meanvalues} and \cite[\S~6]{BF22Pairs} for more discussion on this choice. In \cref{sec:secondmomentformulas}, we introduce the second moment primitive integral formula for $d=2$. For now, we give a second moment formula for $d\geq 3$ which mirrors the work of \cite{Han21} on a (non-primitive) $S$-arithmetic Rogers' formula.
\begin{thm}[$S$-arithmetic primitive second moment formula when $d\geq 3$]\label{thm:primrog}
 If $F\in B_c^{SC}((\Q_S^d)^2)$ for a fixed $d\geq 3$, then 
	$$\widehat{F}(\sg\Gamma_d) = \sum_{(\bv^1,\bv^2) \in P(\Z_S^d)\times P(\Z_S^d)} F(\sg\bv^1, \sg \bv^2)$$
	satisfies $\widehat{F} \in L^1(\SG_d/\Gamma_d)$ and
	$$\int_{\SG_d/\Gamma_d} \widehat{F}(\sg\Gamma_d)\,d\mu_d(\sg) = \frac{1}{\zeta_S(d)^2} \iint_{\Q_S^d\times \Q_S^d} F(\bsx, \bsy) \,d\bsx \,d\bsy + \frac{1}{\zeta_S(d)} \sum_{k\in \Z_S^{\times}} \int_{\Q_S^2} F(\bsx, k\bsx)\,d\bsx.$$
Here $\Z_S^\times$ is the set of all units of $\Z_S$, which reduces to $\Z^\times = \{\pm 1\}$ when $S=\{\infty\}$.
\end{thm}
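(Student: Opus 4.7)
My plan is to mirror Schmidt's proof of the second-moment formula for primitive vectors in the real case \cite{Schmidt60}, adapted to $\Q_S$ using Proposition \ref{prop:primmean} together with an orbit decomposition of $\Gamma_d=\SL_d(\Z_S)$ acting on pairs. First, I would split the defining sum of $\widehat{F}(\sg\Gamma_d)$ according to whether $(\bv^1,\bv^2)\in P(\Z_S^d)^2$ is $\Q_S$-linearly dependent or independent. Primitivity of both entries in $\Z_S^d$ forces any $\Q_S$-linear dependence to take the form $\bv^2=k\bv^1$ with a unique $k\in\Z_S^\times$. Hence $\widehat{F}=\widehat{F}^{\mathrm{dep}}+\widehat{F}^{\mathrm{indep}}$, with the two pieces corresponding to the two summands on the right-hand side of the theorem.

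For the diagonal piece, I would apply Proposition \ref{prop:primmean} to the function $f_k(\bsx)\defeq F(\bsx,k\bsx)\in B_c^{SC}(\Q_S^d)$ for each $k\in\Z_S^\times$, obtaining
\[
\int_{\SG_d/\Gamma_d}\sum_{\bv\in P(\Z_S^d)} F(\sg\bv,k\sg\bv)\,d\mu_d(\sg)=\frac{1}{\zeta_S(d)}\int_{\Q_S^d} F(\bsx,k\bsx)\,d\bsx.
\]
Compact support of $F$ makes the sum over $k\in\Z_S^\times$ locally finite, since for each fixed $\bsx\in\supp F$ only finitely many units $k$ satisfy $k\bsx\in\supp F$; this justifies interchanging sum and integral and yields the second term of the theorem, simultaneously establishing $L^1$-integrability of $\widehat{F}^{\mathrm{dep}}$.

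For the linearly independent piece, I would decompose into $\Gamma_d$-orbits and unfold. Because $\Z_S$ is a PID, Smith normal form provides orbit representatives for $(\bv^1,\bv^2)$ whose associated $d\times 2$ matrix in $\Mat_{d\times 2}(\Z_S)$ has first invariant $1$ (by primitivity of each column) and second invariant a positive integer $m$ coprime to $p_1\cdots p_s$, together with Hermite-type residue data modulo $m$. For each orbit $O_\alpha$ with representative $(\bv^1_\alpha,\bv^2_\alpha)$ and stabilizer $\Gamma_\alpha\defeq\mathrm{Stab}_{\Gamma_d}(\bv^1_\alpha,\bv^2_\alpha)$, the standard unfolding identity gives
\[
\int_{\SG_d/\Gamma_d}\;\sum_{(\bv^1,\bv^2)\in O_\alpha} F(\sg\bv^1,\sg\bv^2)\,d\mu_d(\sg)=\int_{\SG_d/\Gamma_\alpha} F(\sg\bv^1_\alpha,\sg\bv^2_\alpha)\,d\mu_d(\sg).
\]
Disintegrating the Haar measure along the orbit map $\sg\mapsto(\sg\bv^1_\alpha,\sg\bv^2_\alpha)$, whose $\SG_d$-stabilizer is an affine extension of $\SG_{d-2}$, pushes $d\mu_d$ to an explicit constant times Lebesgue measure on pairs of linearly independent vectors in $(\Q_S^d)^2$, reducing each orbit contribution to a multiple of $\iint F(\bsx,\bsy)\,d\bsx\,d\bsy$. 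Integrability of $\widehat{F}^{\mathrm{indep}}$ for $d\geq 3$ follows from the $L^1$ bounds of \cite{Han21} for the $S$-arithmetic Siegel transform.

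The main obstacle is then the orbit bookkeeping: the sum of the orbit-by-orbit contributions must collapse to the clean factor $1/\zeta_S(d)^2$. Place-by-place, at each rational prime $p\in S\setminus\{\infty\}$ the local factor is trivial because every nonzero vector in $\Q_p^d$ is a $\Z_S^\times$-multiple of a primitive vector in $\Z_p^d$, while at each $p\notin S$ the Hermite/Smith enumeration produces the local density $(1-p^{-d})^2$ of pairs of primitive vectors at $p$. Taking the product over all $p\notin S$ yields $\prod_{p\notin S\cup\{\infty\}}(1-p^{-d})^2=\zeta_S(d)^{-2}$, using the Euler product factorization $\zeta_S(d)=\prod_{p\notin S\cup\{\infty\}}(1-p^{-d})^{-1}$ implicit in \eqref{eq:zetaS}. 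Executing this $S$-arithmetic counting carefully, and checking that the measure-zero set of linearly dependent pairs can be added back without harm when passing from the space of linearly independent pairs to the full integral $\iint_{(\Q_S^d)^2}F$, is where the bulk of the technical work will lie.
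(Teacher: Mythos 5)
Your decomposition into linearly dependent and linearly independent pairs is exactly the paper's, and your treatment of the diagonal piece via Proposition~\ref{prop:primmean} applied to $\bsx\mapsto F(\bsx,k\bsx)$, together with local finiteness of the $k$-sum, matches the paper verbatim. The integrability argument via domination by $(\widetilde{f})^2$ and \cite{Han21} also coincides.

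Where you diverge substantially is the linearly independent piece. The paper does \emph{not} carry out any orbit enumeration, Smith/Hermite normal form analysis, or stabilizer disintegration. Instead it applies the Riesz--Markov--Kakutani representation (Lemma~\ref{lem:Reiszrep}): since the linear functional $F\mapsto \int_{\SG_d/\G_d}\sum_{\LI(\Id_2)}F(\sg\bv^1,\sg\bv^2)\,d\mu_d$ is $\SG_d$-invariant and for $d\ge3$ the group $\SG_d$ acts transitively on linearly independent pairs in $(\Q_S^d)^2$, the functional must equal $a\iint_{(\Q_S^d)^2}F$ for some constant $a>0$. The constant is then pinned down not by local densities but by the clean scaling identity
\[
\{(\bv^1,\bv^2)\in(\Z_S^d)^2:\text{lin.\ indep.}\}
=\bigsqcup_{\ell_1,\ell_2\in\N_S}\{(\ell_1\bw^1,\ell_2\bw^2):\bw^i\in P(\Z_S^d)\ \text{lin.\ indep.}\},
\]
which, combined with the non-primitive Rogers identity of \cite[Theorem~3.1]{Han21} and the Jacobian factors $\ell_i^{-d}$, gives $1=a\,\zeta_S(d)^2$ immediately. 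Your route---explicit $\Gamma_d$-orbit representatives, an unfolding integral over $\SG_d/\Gamma_\alpha$, a disintegration of Haar measure through the orbit map, and then an Euler-product assembly of local densities at $p\notin S$---can be made to work in principle, and is indeed the spirit of what the paper does for $d=2$ (where one \emph{cannot} avoid the orbit decomposition, since $\SG_2$ is not transitive on linearly independent pairs). But for $d\ge3$ this is considerably more labor: you would need to enumerate the $\Gamma_d$-orbits with fixed second invariant factor $m\in\N_S$ (you will find $\varphi(m)$ of them, just as in Lemma~\ref{lemma: decomposition}), compute the Haar volume of each fiber $\mathrm{Stab}_{\SG_d}(\bv^1_\alpha,\bv^2_\alpha)/\Gamma_\alpha$, and verify that the resulting sum over $m$ collapses to $\zeta_S(d)^{-2}$. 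None of that is needed once you invoke the representation theorem. I would recommend trading the Smith-normal-form bookkeeping for the two-line scaling argument: it both shortens the proof and exposes why the factor is exactly $\zeta_S(d)^{-2}$ rather than leaving that to a delicate local-global density computation.
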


The case when $d=2$ requires more care, and is stated in two forms after more notation is established. The first form is contained in \cref{rank 2 1st form} following the strategy of \cite{Fai21}, which uses a folding-unfolding argument to decompose $\Q_S^2 \times \Q_S^2$ into $\Gamma_2$-orbits. We build from the first form to obtain the second form in \cref{integral over the cone} following work of \cite{BF22Pairs,Schmidt60} by integrating over a cone which allows for a main term of the the integral that is almost as simple as the main term in \cref{thm:primrog}. Along the way we highlight \cref{Euler ftn-sum formula} which is of independent interest as we give asymptotic expansions of the Euler summatory function over integers with an added congruence condition.
 
\subsection*{Applications} 
We highlight three applications of the primitive integral formulas in the $S$-arithmetic setting. The first two introduce a flavor of the results by giving Schmidt's counting theorem and a quantitative Khintchine--Groshev theorem for real lattices with both primitive and congruence conditions. Further applications which adhere to the $S$-arithmetic context will be stated in \cref{sec:errorterms} and \cref{sec:KG}. The third application gives logarithm laws for unipotent one-parameter subgroups in the $S$-arithmetic setting. 

\subsubsection*{Counting}The first application uses \cref{prop:primmean} and the second moment formulas to obtain asymptotic estimates on counting lattice points. Our main result is \cref{Schmidt Main Theorem}. The proof follows the general outline of \cite{Schmidt60}, with new ideas coming from finding the correct extension to the $p$-adic places. We state here an application of \cref{Schmidt Main Theorem} to the real case. 

\begin{thm}\label{Schmidt Prim+Cong Theorem}
Let $d\ge 3$. Fix an increasing family of Borel sets $\{A_{T}\}_{T\in \R_{>0}} \subset \R^d$ with $\vol(A_{T})=T$.
Let $N=p_1^{k_1}\cdots p_s^{k_s}\in \N$ for finitely many distinct primes $p_1, \ldots, p_s$ and $k_i\in \N$. Fix $\bv_0\in P(\Z^d)$. Set
\[
P_{\bv_0, N}(\Z^d):=\{\bv\in P(\Z^d): \bv\equiv \bv_0\;\mod \;N\}.
\]
For any $\delta\in (\frac 2 3, 1)$, it follows that for almost all $g \in \SL_d(\R)$, 
\[
\# \left( g P_{\bv_0, N}(\Z^d)\cap A_{T}\right)
=\frac {T} {N^d \zeta_S(d)} + O_{g}\left(T^\delta\right).
\]
\end{thm}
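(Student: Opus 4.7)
The plan is to derive \cref{Schmidt Prim+Cong Theorem} from the $S$-arithmetic counting result \cref{Schmidt Main Theorem} by encoding the congruence constraint as a support condition at the non-Archimedean places. Let $S=\{\infty,p_1,\ldots,p_s\}$ and set
\[
W=\prod_{i=1}^s\bigl(\bv_0+p_i^{k_i}\Z_{p_i}^d\bigr)\subset\prod_{i=1}^s\Z_{p_i}^d,
\]
a compact-open set whose total Haar volume is $N^{-d}$ by the Chinese Remainder Theorem. First I would verify that the diagonal embedding $\Z^d\hookrightarrow\Z_S^d$ restricts to a bijection between $P_{\bv_0,N}(\Z^d)$ and the $S$-primitive vectors $\bsv\in P(\Z_S^d)$ whose non-Archimedean coordinates lie in $W$. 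This hinges on two observations: (i) a vector in $\Z_S^d$ that is $p_i$-integral at every $p_i\in S$ automatically lies in the diagonal image of $\Z^d$; and (ii) since $\bv_0$ is primitive in $\Z^d$, the congruence $\bv\equiv\bv_0\pmod N$ forces $p_i\nmid\bv$ at every $p_i\in S$, so primitivity in $\Z^d$ coincides with $S$-primitivity in $\Z_S^d$ for such vectors. Putting $f=\mathbf{1}_{A_T\times W}$ and $\sg=(g,\Id,\ldots,\Id)\in\SG_d$, the left-hand side of \cref{Schmidt Prim+Cong Theorem} equals $\widehat f(\sg\Gamma_d)$.

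Next, \cref{prop:primmean} supplies the main term $\int\widehat f\,d\mu_d=\vol(A_T)\vol(W)/\zeta_S(d)=T/(N^d\zeta_S(d))$, and \cref{thm:primrog} applied to $F(\bsx,\bsy)=f(\bsx)f(\bsy)$ yields the variance bound $\mathrm{Var}(\widehat f)=O(T)$. The diagonal correction $\zeta_S(d)^{-1}\sum_{k\in\Z_S^\times}\int f(\bsx)f(k\bsx)\,d\bsx$ factors as a product of an Archimedean integral $\vol(A_T\cap k^{-1}A_T)\leq T\cdot\min(1,|k|_\infty^{-d})$ and a non-Archimedean integral $\vol(W\cap k^{-1}W)$, and the product formula $|k|_\infty\prod_i|k|_{p_i}=1$ ensures the sum converges. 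Plugging this $L^2$-estimate into the dyadic Chebyshev--Borel-Cantelli scheme of \cref{Schmidt Main Theorem}, applied along a geometric sequence $T_n=\rho^n$ and interpolated via the monotonicity of $\{A_T\}$, produces the almost-sure error bound $O(T^\delta)$ for every $\delta>2/3$.

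The main obstacle is measure-theoretic: the real slice $\{(g,\Id,\ldots,\Id):g\in\SL_d(\R)\}$ has Haar measure zero in $\SG_d$, so Fubini does not directly upgrade a $\mu_d$-almost-everywhere assertion in $\sg$ to a Lebesgue-almost-everywhere assertion in $g$. I would circumvent this by noting that $f=\mathbf{1}_{A_T\times W}$ is invariant under the stabilizer $\mathrm{Stab}(W)$ of $W$ in $\prod_i\SL_d(\Q_{p_i})$, an open (hence positive-measure) subgroup, so $\widehat f(\sg)$ depends on the finite-place components of $\sg$ only through their cosets modulo $\mathrm{Stab}(W)$. Integrating out these finite-place cosets collapses the $\mu_d$-a.e.~statement to an almost-everywhere statement in $g\in\SL_d(\R)$ alone. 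The exponent $\delta>2/3$ is the classical Schmidt threshold dictated by the second-moment estimate, and sharpening it would require control of higher moments.
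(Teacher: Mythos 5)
Your proposal is correct and takes essentially the same route as the paper: identify $P_{\bv_0,N}(\Z^d)$ with the $S$-primitive vectors whose finite-place coordinates land in the compact-open set $W=\prod_i(\bv_0+p_i^{k_i}\Z_{p_i}^d)$, apply the $S$-arithmetic counting theorem to the family $A_T\times W$, and then downgrade the $\mu_d$-a.e.\ statement to a Lebesgue-a.e.\ statement in $g\in\SL_d(\R)$. The only cosmetic difference is in that last step: the paper fixes a small open neighborhood $\mathcal U_{p_i}\subseteq\Id+p_i^{k_i}\Mat_d(\Z_{p_i})$ of the identity and observes that the count is unchanged when the finite-place components range over $\mathcal U_{p_i}$, while you phrase the same observation as invariance under the open stabilizer of $W$; both reduce to the same Fubini argument on an open positive-measure slab.
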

The reduction from theorems over an $S$-arithmetic space to results over the real field, particularly in the context of counting integer vectors with congruence conditions, does not seem to have been previously recorded in literature. It was suggested by an anonymous referee that \cref{Schmidt Main Theorem} may have applications in the real setting, though without reference to congruence conditions. Motivated by this comment and drawing on earlier conversations with Seungki Kim and Anish Ghosh, we recognized that \cref{Schmidt Main Theorem} can indeed be applied to obtain \cref{Schmidt Prim+Cong Theorem}.

It remains open to develop an analogous statement of \cref{Schmidt Prim+Cong Theorem} for $d=2$. This comes from the fact that the error term for $d=2$ in \cref{Schmidt Main Theorem} has an interesting form different from typical counting results as we keep track of two exponents $\delta_1$ and $\delta_2$. This requires in particular that the $p$-adic part must also have increasing volume in the construction of an increasing family of sets. 

\subsubsection*{Diophantine Approximation}The second application is related to Diophantine approximation. Given a function $\psi: \R_{\geq 0}\to \R_{\geq 0}$, we say that an $m\times n$ matrix $A$ is \emph{$\psi$-approximable} if there are infinitely many nonzero $(\bp , \bq) \in \Z^m\times \Z^n$ so that 
$$\|A\bq - \bp\|^m \leq \psi(\|\bq\|^n).$$
The classical Khintchine--Groshev theorem gives a criterion on $\psi$ for understanding the density of $\psi$-approximable numbers. 
The problem quantifying the theorem in the divergent case has studied in various settings and various methods (see \cite{Schmidt60a, Harman98, Harman2003, KS21, AS24} for instance).

We point out that \cite{AGY21} established a quantitative Khintchine--Groshev theorem with congruence conditions based on Schmidt's counting result with an error term (\cite[Theorem 1 (3)]{Schmidt60a}), and one can use \cite[Theorem 1 (4) and Theorem 2 (6)]{Schmidt60a} for counting primitive integer vectors to obtain a primitive quantitative Khintchine--Groshev theorem. However, to the best of our knowledge, obtaining the quantitative Khintchine--Groshev theorem by combining these two conditions is challenging without delving into the geometry of $S$-arithmetic numbers, as outlined below.

\begin{thm}\label{quan K--G thm with prim+cong}
Let $d=m+n\ge 3$ and fix $N=p_1^{k_1}\cdots p_s^{k_s}\in \N$ for $p_1, \ldots, p_k$ mutually distinct primes and $k_i\in \N$. Fix $\bv_0\in P(\Z^d)$. Let $\psi:\R_{>0}\rightarrow \R_{\ge 0}$ be a non-increasing function for which $\sum_{1\le q \le T} \psi(q)$ diverges.
Then for almost all $X\in \Mat_{m,n}(\R)$, 
\[
\lim_{T\rightarrow \infty}
\frac {\#\left\{(\bp, \bq)\in P(\Z^m\times \Z^n): \begin{array}{c}\|X\bq -\bp\|^m\le \psi(\|\bq\|^n),\;\|\bq\|^n<T,\\
\text{and}\;(\bp, \bq)\equiv \bv_0\;\mod\; N \end{array}\right\}}{\left(\zeta_S(d)N^{d}\right)^{-1}\sum_{1\le q\le T} \psi(q)}=1.
\]
\end{thm}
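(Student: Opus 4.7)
The plan is to reformulate the counting problem as a lattice-point count to which \cref{Schmidt Prim+Cong Theorem} (and the Borel--Cantelli argument underlying it) can be applied. Introduce
\[
E_T \defeq \{(\bx,\by) \in \R^m \times \R^n : \|\bx\|^m \leq \psi(\|\by\|^n),\ \|\by\|^n < T\} \subset \R^d
\]
and the unipotent element
\[
h_X \defeq \begin{pmatrix} I_m & -X \\ 0 & I_n \end{pmatrix} \in \SL_d(\R).
\]
Since $h_X(\bp,\bq)^{\mathrm t} = (\bp - X\bq,\ \bq)^{\mathrm t}$, the numerator in the theorem is exactly $\#\bigl(h_X\, P_{\bv_0,N}(\Z^d) \cap E_T\bigr)$. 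Integrating over $\bx$ first contributes a factor proportional to $\psi(\|\by\|^n)$, and a polar integration over $\by$ with the substitution $u = \|\by\|^n$ yields $\vol(E_T) = c_{m,n}\int_0^T \psi(u)\,du$ for a norm-dependent constant $c_{m,n}$. Since $\psi$ is non-increasing and $\sum_q \psi(q)$ diverges, $\int_0^T \psi(u)\,du \sim \sum_{1\leq q\leq T} \psi(q)$, and in particular $\vol(E_T) \to \infty$ at the rate appearing in the denominator of the theorem.

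Treating $\{E_T\}$ as an increasing family parametrized by its volume, the conclusion of \cref{Schmidt Prim+Cong Theorem} applied at $g = h_X$ would yield, for any $\delta \in (2/3,1)$,
\[
\#\bigl(h_X\, P_{\bv_0,N}(\Z^d) \cap E_T\bigr) = \frac{\vol(E_T)}{N^d\,\zeta_S(d)} + O_{h_X}\bigl(\vol(E_T)^\delta\bigr).
\]
Dividing by $(\zeta_S(d) N^d)^{-1}\sum_{q\leq T}\psi(q)$ and using $\vol(E_T)^{\delta-1}\to 0$ would then give the desired limit $1$.

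The main obstacle is that \cref{Schmidt Prim+Cong Theorem} holds for almost every $g \in \SL_d(\R)$ with respect to Haar measure, while the unipotent slice $\{h_X : X \in \Mat_{m,n}(\R)\}$ is a null set in $\SL_d(\R)$; the asymptotic therefore does not transfer automatically to Lebesgue-almost every $X$. To resolve this I would redo the Borel--Cantelli step underlying \cref{Schmidt Prim+Cong Theorem} directly on the unipotent slice: disintegrate $\mu_d$ along an Iwasawa-type decomposition to extract Lebesgue-measure analogues of \cref{prop:primmean} and \cref{thm:primrog} for $X \mapsto \widehat{\mathbf{1}_{E_T}}(h_X)$ restricted to the class $P_{\bv_0,N}(\Z^d)$. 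Chebyshev's inequality applied along a sparse dyadic sequence $T_k = 2^k$, together with the monotonicity of $T \mapsto \#(h_X\, P_{\bv_0,N}(\Z^d) \cap E_T)$ used to interpolate between consecutive $T_k$'s, then produces a summable tail and the desired asymptotic for almost every $X \in \Mat_{m,n}(\R)$, completing the proof.
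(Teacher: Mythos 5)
Your setup is correct: introducing $E_T$, the unipotent matrix $h_X$, and checking $\vol(E_T)\sim\sum_{q\le T}\psi(q)$ reduces the theorem to a counting asymptotic at $g=h_X$, and you correctly pinpoint the real obstacle, namely that the slice $\{h_X: X\in\Mat_{m,n}(\R)\}$ is a $\mu_d$-null subset of $\SL_d(\R)/\SL_d(\Z)$, so the ``for almost every $g$'' conclusion of \cref{Schmidt Prim+Cong Theorem} does not descend to almost every $X$ automatically.

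The gap is in the proposed fix. Disintegrating $\mu_d$ along an Iwasawa-type (or parabolic) decomposition $\sg = \sh\, u_X$ does \emph{not} ``extract'' slice analogues of \cref{prop:primmean} and \cref{thm:primrog}: from $\int_{\SG_d/\Gamma_d} f\,d\mu_d = c$ you can only conclude that $\int_X f(\sh\, u_X)\,dX$ behaves as expected for \emph{a.e.\ $\sh$}, and the single slice $\sh = \Id$ is again a null set, so nothing is gained. The moment formulas on the unipotent slice are not consequences of the global moment formulas via Fubini; they would be genuinely new statements requiring their own unfolding argument (this is essentially what \cite{AGY21} does directly). So as written, your Borel--Cantelli step has nothing to feed into Chebyshev's inequality.

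The paper resolves this differently, via a \emph{thickening and sandwiching} argument inherited from \cite{Han22, AGY21} and carried out explicitly in the proof of \cref{1-d primitive Khintchine-Groshev Thm}. One introduces perturbed approximating functions $\psi_\ell^\pm$ (and times $\T_\ell^\pm$) so that the corresponding sets satisfy $E_{\psi_\ell^-}(\T_\ell^-) \subseteq \sh\, E_\psi(\T_\ell) \subseteq E_{\psi_\ell^+}(\T_\ell^+)$ for every $\sh$ in a small open neighborhood $C_S(\eps_\ell)$ of the identity in the complementary directions. Since $C_S(\eps_\ell)\{u_X\}$ has positive $\mu_d$-measure, the a.e.\ conclusion of \cref{Schmidt Prim+Cong Theorem} (for both $\psi_\ell^+$ and $\psi_\ell^-$) provides, by Fubini, a specific $\sh_\ell \in C_S(\eps_\ell)$ such that the counting asymptotics hold at $\sh_\ell u_X$ for a.e.\ $X$; intersecting these full-measure sets over $\ell$ and using the sandwich and $\eps_\ell\to0$ then yields the asymptotic at $u_X$ itself for a.e.\ $X$. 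This is the missing mechanism: approximate invariance of the counting function under small perturbations of $g$, not disintegration of $\mu_d$. If you prefer the route via slice moment formulas (method of \cite{AGY21}), you must prove those formulas from scratch for the transform $X\mapsto\widehat{\mathbf{1}_{E_T}}(h_X)$, which is a separate, nontrivial computation, not a corollary of \cref{prop:primmean} and \cref{thm:primrog}.
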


Our main contribution gives an asymptotic density for the number of $\psi$-approximable $S$-arithmetic numbers in the case when $m=n=1$. As in the case when $d=2$, we have two different error terms, so we give the exact statement in \cref{1-d primitive Khintchine-Groshev Thm} after more notation is established.

 The proof uses \cref{primitive Khintchine-Groshev Thm} which gives a condition to find the density of $\psi$-approximable $S$-arithmetic integers by using the second moment for $d\geq 3$ from \cref{thm:primrog}. \cref{primitive Khintchine-Groshev Thm} adapts the results of \cite{Han22} to the primitive setting, where \cite{Han22} in turn generalizes the method of \cite{AGY21} in the $S$-arithmetic setting. We remark that Kelmer and Yu in \cite{KY23} showed a quantitative Khintchine--Groshev theorem where the error bound refines the work of \cite{AGY21} in a more general setting, but we did not see any direct benefits of using this version instead of that in \cite{AGY21}.

\subsubsection*{Unipotent Logarithm Laws} The third application gives a theorem for logarithm laws. In the classical setting, logarithm laws give the rate of escape from a compact set for a one-parameter geodesic flow. This has been well studied in the $S$-arithmetic setting in \cite{ultrametric1, ultrametric2}. Here we consider logarithm laws for unipotent flows in the spirit of \cite{AM09}: for $\mu$-almsost every $g\in \SL_d(\R)/\SL_d(\Z)$, 
	$$\limsup_{t\to\infty} \frac{\log\alpha_1(u_tg\Z^d)}{\log t} = \frac{1}{d},$$
where $u_t$ is a unipotent flow for $\SL_d(\R)/\SL_d(\Z)$ and $\alpha_1(g\Z^d) = \sup\{{\norm{\bv}}^{-1}:0\neq \bv\in g\Z^d\}$ measures the rate of escape by the shortest vector.

We find the rate of escape in the $S$-arthmetic setting as follows. First we recall the definition of the shortest $S$-arithmetic lattice vector.
\begin{defn}
	We define $\alpha_1: \SG_d/\Gamma_d \to\mathbb{R}$ by
	$$\alpha_1(\Lambda) :=  \sup\left\{\prod_{p\in S} \norm{\bv_p}_p^{-1}:\bsv \in \Lambda-\{0\} \right\}
	=\sup\left\{\prod_{p\in S} \norm{\bv_p}_p^{-1}:\bsv \in P(\Lambda) \right\}.$$
\end{defn}
Next, we clarify our choice of neighborhood when taking limits in $\Q_S$.
\begin{defn}
	We define the limsup of a function $f:\mathbb{Q}_S \to \mathbb{R}$ by considering the following neigbhorhood of infinity in $\Q_S$
 $$\limsup_{|\sx|\rightarrow \infty} f(\sx) = \inf_{\stackrel{\sx=(x_p)_{p\in S} \in \Q_S}{|x_p|_p \to\infty,\forall p\in S}} \left(\sup\left\{f(\sy): \sy\in \Q_S, \; { |y_p|_p\ge |x_p|_p},\;\forall p\in S\right\}\right).$$
\end{defn}
In the above definition we can replace the infimum by a limit, which is well defined by monotonicity. Finally note we have a one-$\Q_S$-parameter unipotent subgroup generated by elements $\su_\sx$ for $\sx \in \Q_S$. {In this setting, we obtain a logarithm law.}

\begin{thm}\label{thm:loglawAM}
For $d\ge 2$, it follows that for {$\mu_d$}-almost every $\Lambda$, 
\[
\limsup_{|\sx|\rightarrow\infty}
\frac{\log(\alpha_1(\su_\sx \Lambda))}{\log\left(\prod_{p\in S} |x_p|_p\right)} = \frac{1}{d}.
\]
\end{thm}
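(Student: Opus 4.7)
The plan is to follow the Athreya--Margulis strategy \cite{AM09}, combining a tail estimate for the shortest-vector function $\alpha_1$ on $\SG_d/\Gamma_d$ with a Borel--Cantelli argument along a discretization of $\Q_S$. The key enabling fact is that each $\su_\sx$ preserves the probability measure $\mu_d$.

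The first step is to establish the tail estimate
\[
\mu_d\!\left(\{\Lambda\in\SG_d/\Gamma_d : \alpha_1(\Lambda) > L\}\right) \asymp_d L^{-d}
\]
(up to polylogarithmic factors) as $L\to\infty$. For the upper bound, apply \cref{prop:primmean} to $f=\one_{B_L}$, where $B_L\subset\Q_S^d$ is the set of vectors $\bsx$ with $\prod_{p\in S}\|x_p\|_p < 1/L$; since $\alpha_1(\Lambda)>L$ holds precisely when some nonzero primitive $\bsv\in\Lambda$ lies in $B_L$, Markov's inequality yields $\mu_d(\alpha_1>L)\le \zeta_S(d)^{-1}\vol(B_L)\ll L^{-d}(\log L)^{|S|-1}$ after decomposing $B_L$ into product balls indexed by the discrete valuations at each non-Archimedean place. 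The matching lower bound comes from the Paley--Zygmund inequality applied to $\widehat{f}$, where the required second moment control $\int\widehat{f}^{\,2}\,d\mu_d\ll L^{-d}$ is supplied by \cref{thm:primrog} when $d\ge 3$ and by the $d=2$ second-moment formulas in \cref{rank 2 1st form}/\cref{integral over the cone}.

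The second step yields the upper bound $\limsup\le 1/d$. Fix $\eps>0$ and choose a countable sequence $\{\sx_n\}\subset\Q_S$ with $T_n:=\prod_{p\in S}|x_{n,p}|_p=2^n$, arranged so that every $\sx$ in a neighborhood of infinity can be written as $\sx_n+\sy$ for some $n$ and some $\sy$ with each $|y_p|_p$ uniformly bounded. The tail estimate gives
\[
\sum_n \mu_d\!\left(\alpha_1(\su_{\sx_n}\Lambda)>T_n^{\,1/d+\eps}\right)\ll \sum_n 2^{-\eps d\,n}(\log 2^n)^{|S|-1}<\infty,
\]
so by Borel--Cantelli together with $\su_{\sx_n}$-invariance of $\mu_d$, for $\mu_d$-a.e.\ $\Lambda$ the inequality $\alpha_1(\su_{\sx_n}\Lambda)\le T_n^{1/d+\eps}$ holds for all but finitely many $n$. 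Interpolating via $\su_\sx=\su_\sy\su_{\sx_n}$, and noting that $\su_\sy$ with bounded $|y_p|_p$ changes $\alpha_1$ only by a bounded multiplicative constant, gives $\limsup\le 1/d+\eps$; then let $\eps\to 0$.

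The third step, the lower bound $\limsup\ge 1/d$, is the principal obstacle. The aim is to apply an effective (Kochen--Stone) Borel--Cantelli lemma to the events $E_n=\{\alpha_1(\su_{\sx_n}\Lambda)>T_n^{\,1/d-\eps}\}$, whose $\mu_d$-measures are $\gg T_n^{-(1-\eps d)}$ with divergent sum, and to conclude that infinitely many $E_n$ occur almost surely. The required quasi-independence $\mu_d(E_n\cap E_m)\ll \mu_d(E_n)\mu_d(E_m)$ for $|n-m|$ large must be extracted from quantitative mixing of the family $\{\su_\sx\}_{\sx\in\Q_S}$ on $\SG_d/\Gamma_d$ simultaneously at every place in $S$. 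Orchestrating $S$-arithmetic effective equidistribution and decay of matrix coefficients for $\SL_d(\Q_S)$ with the product structure of the neighborhoods of infinity in $\Q_S$ is the main technical challenge; for $d=2$ this is further complicated by the unboundedness of the $S$-primitive Siegel transform noted in the introduction, which forces the second moment input to be handled via the delicate $d=2$ formulas.
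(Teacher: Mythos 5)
Your upper bound (Step 2) is essentially the paper's argument: apply the mean value formula together with Markov's inequality at a discrete sequence of times, sum the tail probabilities, and conclude by the first Borel--Cantelli lemma and interpolation. The extra polylogarithmic factor from slicing $B_L$ by the finite-place valuations is harmless since the sum still converges, exactly as in the paper's proof of \cref{upper bound of AM}.

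The lower bound (Step 3) is where your proposal departs from the paper and where there is a genuine gap. You fix a sequence of times $\sx_n$ and try to show that the events $E_n=\{\alpha_1(\su_{\sx_n}\Lambda)>T_n^{1/d-\eps}\}$ occur infinitely often via Kochen--Stone. This needs quasi-independence $\mu_d(E_n\cap E_m)\ll \mu_d(E_n)\mu_d(E_m)$, which you flag but do not supply; extracting such a bound from $S$-arithmetic unipotent mixing at polynomial rate is a substantial problem and is not addressed by anything in the paper. Moreover, as written the Kochen--Stone hypothesis already fails: with $T_n=2^n$ (the sequence you introduced in Step 2), $\mu_d(E_n)\gg T_n^{-(1-\eps d)}=2^{-n(1-\eps d)}$ is a convergent geometric series for $\eps d<1$, so $\sum_n\mu_d(E_n)$ does not diverge. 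Repairing this by taking $T_n$ polynomial only tightens the quasi-independence you would then need.

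The paper, following Athreya--Margulis, avoids quasi-independence altogether. The Random Minkowski theorem (\cref{prop:random minkowski}), proved from the primitive second moment formulas, gives
$\mu_d\left(\left\{\Lambda: P(\Lambda)\cap A=\emptyset\right\}\right)\ll 1/\vol_S(A)$ for $d\ge 3$ (and $\ll E(A)/\vol_S(A)$ for $d=2$).
One then builds a family of product sets $A_\SK\subset\Q_S^d$ adapted to the Jordan block structure of the unipotent flow, with $\vol_S(A_{\SK_\eta})\to\infty$, passes to a subsequence along which the avoidance probabilities are summable, and uses only the \emph{first} Borel--Cantelli lemma: for a.e.\ $\Lambda$, each $A_{\SK_\eta}$ eventually contains a primitive vector $\bsv_\eta\in P(\Lambda)$. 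The crucial device you are missing is that the time $\t_\eta$ is then constructed \emph{from that vector} (for instance $\t_\eta=-\sy_\eta/\sz_\eta$ in dimension $3$), chosen so that $\su_{\t_\eta}\bsv_\eta$ is very short. Because the times are lattice-dependent, there is no fixed sequence of events whose correlations must be controlled. Your Paley--Zygmund observation is morally the Cauchy--Schwarz inequality inside the Random Minkowski proof and is fine as a second-moment ingredient, but it does not become a limsup lower bound without this construct-the-time-from-the-lattice-point step.
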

A key difference from the work of  \cite{AM09} arises in the case of $d=2$, where we have access to a second moment formula in order to prove a random Minkowski theorem. Also, the additional $p$-adic places require finding the correct target sets for a lower bound and the correct scaling factor.

\subsection{Outline} The paper is organized as follows. In \cref{sec:notation} we set up the notation, and state all of the remaining main theorem statements. In particular, we established the notion of primitive $S$-arithmetic vectors and provided equivalent definitions to show that $S$-primitive vectors generalize primitive integer vectors in $\Z^d$. \Cref{Schmidt Main Theorem}, an analog of Schmidt's counting theorem, features a different error term due to the $S$-arithmetic setting. In dimension $d=2$, an additional condition on the increasing family of borel sets is required, 
which did not appear in the real case. This extra restriction excludes the two dimensional case of \Cref{Schmidt Prim+Cong Theorem} and thus \Cref{quan K--G thm with prim+cong}.

In \cref{Proofs of Primitive integral formulas} we provide proofs of the primitive integral formulas. We first achieve the integrability of functions arising from the $S$-primitive Siegel transform and then apply Riesz--Markov--Kakutani theorem. The main difficulty occurs when $d=2$, where, unlike in the real case, the transformed functions are unbounded. To overcome this, we employ analytic techniques to show that these functions lie in $L^1(\SL_d(\Q_S)/\SL_d(\Z_S))$ and $L^2(\SL_d(\Q_S)/\SL_d(\Z_S))$.
In \cref{Integral formulas over Cone}, we define the notion of a cone in the $S$-arithmetic space and extend these formulas in the case of $d=2$ to obtain variance estimates.

We conclude in \cref{sec:applicationproofs} with the proofs of the three applications, separated into three subsections: error terms in \cref{sec:errortermsproofs}, Khintchine--Groshev theorems in \cref{sec:KGproofs}, and logarithm laws for unipotent flows in \cref{sec:loglawsproofs}. For the last application, we classify unipotent flows in $\SL_d(\Q_S)$ up to conjugacy, by showing that any unipotent one-parameter subgroup arises from a nilpotent matrix. In the real case, this follows from the exponenetial and logarithmic maps between Lie groups and their Lie algebras. Although these maps are not globally defined in the $p$-adic Lie groups in general, we observe that they are polynomial on the sets of unipotent and nilpotent matrices, respectively. This yields the same classification of unipotent flows in $\SL_d(\Q_p)$ as in $\SL_d(\R)$.

\subsection{Acknowledgements} We would like to thank Jayadev Athreya for connecting us for this project. We also appreciate Barak Weiss and Shucheng Yu for valuable advice. SF was partially supported by the Deutsche Forschungsgemeinschaft (DFG) -- Projektnummer 445466444 and 507303619. JY was thankful for the support of Tata Institute of Fundamental Research and Korea Institute for Advanced Study. Some of the work and ideas in this project came from discussions during the conference on Combinatorics, Dynamics and Geometry on Moduli Spaces at CIRM, Luminy, September 2022.

\section{Notation and Results}\label{sec:notation}
We will focus on $S$-arithmetic groups with respect to the rational numbers. One can work with $S$-arithmetic groups in a more general setting, to which we refer the reader to a short overview with many further resources in \cite[Appendix C]{DWM}. For ease of reference, we have \cref{sec:Sarithmeticreference} and \cref{sec:unimodular} cover the background in $S$-arithmetic numbers and their unimodular lattices. In \cref{sec:primvectors} we introduce the notion of a primitive $S$-arithmetic vector, and the analog of the greatest common divisor. In \cref{sec:meanvalues} and \cref{sec:secondmomentformulas} we give the exact statements of the integral formulas. Finally we conclude the statements of the theorems for the three applications in \cref{sec:errorterms}, \cref{sec:KG}, and \cref{sec:LL}.

\subsection{S-arithmetic space} \label{sec:Sarithmeticreference} Let $S$ be a union of $\{\infty\}$ and a finite set of distinct primes $S_f = \{p_1,\ldots, p_s\}$. Let $\Q_p$ denote the completion field of $\Q$ with respect to the $p$-adic norm $\abs{\cdot}_p$ and let $\Q_\infty = \R$. We consider the \textit{$S$-arithmetic numbers} given by $\Q_S = \prod_{p\in S} \Q_p$. We denote an element in $\Q_S$ by $\sx = (x_p)_{p\in S}$, and when clear use $|x_p|_p = |\sx|_p$ interchangeably. To distinguish the case when the element is given by the diagonal embedding into $\Q_S$, given $z\in \Q$, we will use the same notation of $z \in \Q_S$ for the element $(z)_{p\in S}$. The corresponding \textit{ring of $S$-integers} is given by
$$\Z_S = \{z \in \Q_S: z\in \Q \text{ and } |z|_p \leq 1 \text{ for all } p\notin S\} = \{z \in \Q_S: z\in \Z[p_1^{-1},\ldots, p_s^{-1}]\}.$$ For convenience, we will also denote $\Z_S = \Z[p_1^{-1},\ldots, p_s^{-1}] \subset \Q$ without the diagonal embedding and any element $z\in \Z_S$ will be denoted as such for both the element of $\Q$ and the element of $\Q_S$ under the diagonal embedding. When $S = \{\infty\}$ we recover $\Q_S = \R$ and $\Z_S = \Z$.

\begin{notation}[$S$-arithmetic numbers]\label{S-arithmetic numbers} For  $S=\{\infty, p_1, \ldots, p_s\}$,
\begin{enumerate}
\item $\Z_S^\times = \{\pm p_1^{k_1}\cdots p_s^{k_s}: k_1,\ldots, k_s \in \Z\}$ is the set of units in $\Z_S$, and we identify $\Z_S^\times$ with its diagonal embedding in $\Q_S$; 
\item $\N_S=\{m \in \N : \gcd(m,p)=1\text{ for all } p\in S_f\}$;
\item $L_S=\prod_{p\in S_f} L_p$, where for each $p\in S_f$, set $L_p=p$ if $p\neq 2$ and $L_2=2^3$;
\item $\zeta_S(d)=\sum_{m\in \N_S} \frac 1 {m^d}$ is the $S$-arithmetic zeta function {at $d$} for each $d\in \N_{\geq 2}$;
\item $\rd(\sx)=\prod_{p\in S} |x_p|_p$ for invertible $\sx=(x_p)_{p\in S}\in \Q_S$. :
\end{enumerate}
\end{notation}

When $S=\{\infty\}$, we have $\mathbb{N}_S = \mathbb{N}$, $L_S = 1$, $\zeta_S$ is the classical Riemann zeta function, and $\rd$ is the absolute value function.
We denote an element of the product space $\bsv \in \Q_S^d$ by $\bsv = (\bv_p)_{p\in S}$, where each $\bv_p \in \Q_p^d$. The volume measure $\vol_S$ on $\Q_S^d$ is the product of the usual Lebesgue measure $\vol_\infty$ on $\R^d$ and the normalized Haar measure $\vol_p$ on $\Q_p^d$, $p<\infty$, for which $\vol_p(\Z_p^d)=1$.

\begin{notation}[$S$-arithmetic groups] We set
\begin{enumerate}
	\item $\GL_d(\Q_S)=\prod_{p\in S} \GL_d(\Q_p) = \prod_{p\in S} \{d\times d \text{ matrices over } \Q_p \text{ with nonzero determinant}\}$;
	\item $\SG_d = \SL_d(\Q_S) = \prod_{p\in S} \SL_d(\Q_p) = \prod_{p\in S}\{g_p\in \GL_d(\Q_p) : \det g_p=1\}$;
	\item $\Gamma_d = \SL_d(\Z_S)$ is the set of determinant $1$ matrices with entries in $\Z_S \subset \Q$. We use the same notation for $\Gamma_d$ under the diagonal embedding into $\SG_d$.
	\end{enumerate}	
\end{notation}

\begin{rmk}
	Note that one might naively expect $\Gamma_d$ to be given by $\SL_d(\Z)\times\prod_{s\in S_f} \SL_d(\Z_p)$, but in fact for $p \in S_f$ the space $\SL_d(\Z_p)$ acts as a fundamental domain in the quotient space. That is $\SG_d/\Gamma_d$ has a fundamental domain given as the product of a fundamental domain of $\SL_d(\R)/\SL_d(\Z)$ and $\SL_d(\Z_p)$ for each $p\in S_f$.
	\end{rmk}
Let $\mu_d$ be the normalized Haar measure on $\SG_d$ for which $\mu_d\left(\SG_d/\Gamma_d\right)=1$. When $d=2$, we also consider the measure $\eta_2$ on $\SG_2$ defined as follows: for generic $\sg \in \SG_2$, it can be decomposed by
\[
\sg=
\bpm
1 & 0 \\
\sc & 1 \\ 
\epm
\bpm
\sa & \sb \\
0 & \sa^{-1} \\ 
\epm.
\]
Then $d\eta_2(\sg)=d\sa \, d\sb\,  d\sc$. 
One can check that $\eta_2$ is a Haar measure and $\mu_2=\frac 1 {\zeta_S(2)}\eta_2$ (\cite{GH21}).

\subsection{The space of unimodular $S$-lattices} \label{sec:unimodular}
An \emph{($S$-)lattice} $\Lambda$ in $\Q_S^d$ is defined as a free $\Z_S$-module in $\Q_S^d$ of rank $d$. That is, there are $\bsv^1, \ldots, \bsv^d\in \Q_S^d$ such that their $\Z_S$-span is $\Lambda$ and $\Q_S$-span is $\Q_S^d$. Denote by $\rd(\Lambda)$ the covolume of $\Lambda$ with respect to $\vol_S$. We say that $\Lambda$ is \textit{unimodular} if $\rd(\Lambda) = 1$.

The group $\GL_d(\Q_S)$ acts linearly for each component in the product space $\Q_S^d$. Namely for $\sg=(g_p)_{p\in S} \in \GL_d(\Q_S)$ and $\bsv=(\bv_p)_{p\in S} \in \Q_S^d$, the action of $\sg$ at $\bsv$ is given by $\sg\bsv=(g_p\bv_p)_{p\in S}$. From this action, one can deduce that  $\sg\Z_S^d$ for $\sg\in \GL_d(\Q_S)$, is a lattice with covolume
\[
\rd(\sg\Z_S^d)=\prod_{p\in S} |\det g_p|_p.
\]
Notice that the definition of the covolume $\rd$ coincides with \cref{S-arithmetic numbers} (5), as $\rd(\sx)$ for invertible $\sx \in \Q_S$ is the covolume of the lattice $\sx \Z_S$ in $\Q_S$.
For $p\in S_f$, one can consider the group
\[
\UL_d(\Q_p)=\{g_p\in \GL_d(\Q_p) : |\det g_p|_p=1\}
\]
which is an open subgroup in $\GL_d(\Q_p)$. Denote $$\UL_d(\Q_S)=\SL_d(\R)\times \prod_{p\in S_f} \UL_d(\Q_p).$$

It is known that the space of unimodular lattices in $\Q_S^d$ is identified with $\UL_d(\Q_S)/\UL_d(\Z_S)$ and $\SG_d/\Gamma_d$ is a proper subspace of the space of unimodular lattices.
In this paper, we concentrate our attention on $\SG_d/\G_d$ since
the primitive integral formulas over unimodular lattices are easily deduced from the proofs of those for $\SG_d/\Gamma_d$. Moreover, applications for $\UL_d(\Q_S)/\UL_d(\Z_S)$ can be obtained from the results on $\SG_d/\Gamma_d$ by integrating on variables related to $(\det g_p)_{p\in S_f}$.

\subsection{Primitive vectors and the primitive Siegel transform} \label{sec:primvectors}
The primitive vectors in $\Z_S^d$ are defined by 
$$P(\Z_S^d) = \Gamma_d\cdot {\bf e_1},$$
where we again use the identification of ${\bf e_1} = {\tp{(1,0,\ldots, 0)}}\in \Z^d$ with the diagonally embedded element ${\bf e_1} = ({\bf e_1})_{p\in S}$. Notice that when $S= \{\infty\}$, we recover the primitive integer lattice given by all points in $\Z^d$ which do not have a common factor: $P(\Z^d) = \SL_d(\Z)\cdot {\bf e}_1$. 

We now state two equivalent characterizations of the primitive $S$-arithmetic vectors. The first identifies the connection between the $S$-primitive lattice in $\Z_S^d$ and the integer primitive lattice in $\Z^d$. This fact was used in \cite{GH21}, but we state a proof here for completeness. The second characterization reflects the fact that $P(\Z^d)$ are exactly the elements in $\bv \in \Z^d$ with $\gcd(\bv) = 1$.

\begin{prop}\label{relation btwn primitive and S-primitive}  Identifying $P(\Z^d)$ with its image under the diagonal embedding in $\Z_S^d$, 
\[
P(\Z_S^d)=\Z_S^\times\cdot P(\Z^d).
\]
\end{prop}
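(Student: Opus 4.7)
The plan is to prove both inclusions of the asserted equality.

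For $\Z_S^\times \cdot P(\Z^d) \subseteq P(\Z_S^d)$, I would argue by direct construction. Given $u \in \Z_S^\times$ and $\bw = \gamma_0 \mathbf{e}_1$ with $\gamma_0 \in \SL_d(\Z) \subseteq \Gamma_d$, set $\gamma' := \gamma_0 \cdot \diag(u, u^{-1}, 1, \ldots, 1)$. Since $u, u^{-1} \in \Z_S$, this matrix has all entries in $\Z_S$ and determinant one, so $\gamma' \in \Gamma_d$; moreover $\gamma' \mathbf{e}_1 = u\bw$, placing $u\bw$ in $P(\Z_S^d)$.

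The reverse inclusion is the heart of the statement. Starting from $\bsv = \gamma \mathbf{e}_1 \in P(\Z_S^d)$ with $\gamma \in \Gamma_d$, the first step is to clear the $S_f$-adic parts of $\bsv = (v_1, \ldots, v_d)$ by a single unit. For each $p \in S_f$ let $k_p := \min_{1 \leq i \leq d} v_p(v_i)$, where $v_p$ denotes the $p$-adic valuation, and set $u := \prod_{p \in S_f} p^{k_p} \in \Z_S^\times$. Writing $\bw := u^{-1} \bsv$, one checks that $v_p(w_i) \geq 0$ for every $p \in S_f$ and every $i$, with equality attained for some index $i$ for each such $p$. Non-negativity of all $p$-adic valuations for $p \in S_f$ (combined with $\bw \in \Z_S^d$) forces $\bw \in \Z^d$, while the equality conditions yield that $\gcd(\bw)$ is coprime to $p_1 \cdots p_s$.

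To upgrade this to $\gcd(\bw) = 1$ -- which gives $\bw \in P(\Z^d)$ and hence $\bsv = u\bw \in \Z_S^\times \cdot P(\Z^d)$ -- I would observe that $\bw$ itself still lies in $P(\Z_S^d)$: the matrix $\gamma \cdot \diag(u^{-1}, u, 1, \ldots, 1)$ belongs to $\Gamma_d$ (determinant one, first column $\bw \in \Z^d$, second column $u$ times the second column of $\gamma$ in $\Z_S^d$, other columns unchanged) and maps $\mathbf{e}_1$ to $\bw$. Hence $\bw$ extends to a $\Z_S$-basis of $\Z_S^d$, so the $\Z_S$-ideal generated by the coordinates of $\bw$ is all of $\Z_S$. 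Since $\Z_S$ is a principal ideal domain and this ideal coincides with $\gcd(\bw)\Z_S$, the integer $\gcd(\bw)$ must be a unit in $\Z_S$; but the only positive integer units of $\Z_S$ are products of powers of primes in $S_f$, so combined with the earlier coprimality this forces $\gcd(\bw) = 1$. The main subtlety is exactly this final reconciliation: the choice of $u$ absorbs all $S_f$-prime factors, making $\gcd(\bw)$ coprime to $p_1 \cdots p_s$, while the extension-to-basis property forces $\gcd(\bw)$ to be an $S_f$-unit; together these conditions pin down $\gcd(\bw) = 1$.
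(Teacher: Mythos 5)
Your proof is correct and follows essentially the same route as the paper: the easy inclusion uses the identical $\diag(u,u^{-1},1,\ldots,1)$ trick, and the hard inclusion likewise clears the $S_f$-prime content by a single unit to land in $\Z^d$ with gcd coprime to $p_1\cdots p_s$, then uses the determinant-one condition (your unimodular-column/PID phrasing versus the paper's cofactor-expansion contradiction is only a cosmetic difference) to force the gcd to equal $1$.
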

\begin{proof} 
	 We consider the sets before the diagonal embedding.  If $\bv \in P(\Z_S^d)$, then $\bv = g {\bf e_1}$ for some $g\in \Gamma_d$. Since the entries of $g$ live in $\Z_S = \Z[p_1^{-1},\ldots, p_s^{-1}]$, choose appropriate integers $k_1,\ldots, k_s$ (including $0$) so that $\widetilde{{\bv}} = p_1^{k_1}\cdots p_s^{k_s} g {\bf e_1} \in \Z^d$ and $\widetilde{\bv}/p \notin \Z^d$ for any $p\in S_f$. Notice that $\gcd(\widetilde{\bv})\in \N_S$ by our choice of $k_1, \ldots, k_s$. Suppose that $\gcd(\widetilde \bv)= m \ge 2$. Then $\det g\in m\Z_S$ since $\bv=g{\bf e_1}\in m\Z_S^d$ is the first column of $g$. This contradicts the fact that $\det g=1$.

	In the reverse direction let $ p_1^{k_1}\cdots p_s^{k_s} \in \Z_S^\times$ for $k_1,\ldots ,k_s \in \Z$ and let $\bv  \in P(\Z^d)$. Then $\bv = g {\bf e_1}$ for some $g \in \SL_d(\Z)$. Now consider the matrix 
	$$\widetilde{g} = g\;\diag( p_1^{k_1}\cdots p_s^{k_s},p_1^{-k_1}\cdots p_s^{-k_s}, 1, \ldots, 1).
	$$
	 Then $\widetilde{g} {\bf e_1}= p_1^{k_1}\cdots p_s^{k_s} \bv $, and moreover $\widetilde{g} \in \Gamma_d$ since $\det \widetilde{g}=1$ and the entries of $\widetilde{g}$ live in $\Z_S$. 
\end{proof}

\begin{defn}\label{def:Sgcd}
   The \emph{$S$-greatest common divisor} $\sgcd(\bv)$ of a vector $\bv\in \Z_S^d$, which takes a value in $\N_S$ is given as follows. For a given $\bv\in \Z_S^d$, let $k_1, \ldots, k_s$ be the smallest integers in $\N\cup\{0\}$ for which $\bv'=p_1^{k_1} \cdots p_s^{k_s}\bv\in \Z^d$. Denote $\gcd(\bv')=p_1^{k'_1}\cdots p_s^{k'_s}m$, where $k'_1,\ldots, k'_s\in \N\cup\{0\}$ and $m\in \N_S$. We define $\sgcd(\bv)=m.$
\end{defn}


\begin{prop} \label{prop:sgcd} The primitive vectors are exactly those with an $S$-greatest common divisor of $1$:
	 $$P(\Z_S^d) = \{\bv \in \Z_S^d:  \sgcd(\bv) = 1\}.$$
\end{prop}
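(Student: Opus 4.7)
The plan is to prove both inclusions by combining the characterization $P(\Z_S^d)=\Z_S^\times\cdot P(\Z^d)$ from \cref{relation btwn primitive and S-primitive} with a direct computation of $\sgcd$ via \cref{def:Sgcd}. The whole argument is essentially bookkeeping with the exponents of the primes in $S_f$, so no genuinely hard step should appear; the only thing to be careful about is tracking which exponents are non-negative.

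For the forward inclusion ``$\subseteq$'', suppose $\bv\in P(\Z_S^d)$. By \cref{relation btwn primitive and S-primitive} we can write
\[
\bv = \pm\, p_1^{\ell_1}\cdots p_s^{\ell_s}\,\bw, \qquad \ell_i\in\Z,\ \bw\in P(\Z^d).
\]
To clear denominators as required in \cref{def:Sgcd}, I would take $k_i:=\max(0,-\ell_i)\in\N\cup\{0\}$, which are manifestly the smallest non-negative integers that make
\[
\bv' := p_1^{k_1}\cdots p_s^{k_s}\bv = \pm\, p_1^{\max(\ell_1,0)}\cdots p_s^{\max(\ell_s,0)}\,\bw
\]
an element of $\Z^d$. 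Since $\gcd(\bw)=1$ as $\bw\in P(\Z^d)$, I would then compute
\[
\gcd(\bv') = p_1^{\max(\ell_1,0)}\cdots p_s^{\max(\ell_s,0)}\cdot 1,
\]
which by the uniqueness of the decomposition in \cref{def:Sgcd} (with $k'_i=\max(\ell_i,0)$ and $m=1$) yields $\sgcd(\bv)=1$.

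For the reverse inclusion ``$\supseteq$'', suppose $\bv\in\Z_S^d$ with $\sgcd(\bv)=1$. Let $k_i$ be the exponents from \cref{def:Sgcd} and write
\[
\gcd(\bv') = p_1^{k'_1}\cdots p_s^{k'_s}\cdot 1, \qquad \bv':=p_1^{k_1}\cdots p_s^{k_s}\bv\in\Z^d.
\]
Setting $\bw := \bv'/\gcd(\bv')\in\Z^d$, I have $\gcd(\bw)=1$, hence $\bw\in P(\Z^d)$. Rearranging gives
\[
\bv = p_1^{k'_1-k_1}\cdots p_s^{k'_s-k_s}\,\bw,
\]
and since $k'_i-k_i\in\Z$, the scalar lies in $\Z_S^\times$. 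Applying \cref{relation btwn primitive and S-primitive} in the reverse direction, I conclude $\bv\in\Z_S^\times\cdot P(\Z^d)=P(\Z_S^d)$. Combining the two inclusions finishes the proof.
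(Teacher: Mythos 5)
Your proof is correct and follows exactly the route the paper intends: the paper's own proof is a one-line remark that the statement "follows almost directly" from \cref{relation btwn primitive and S-primitive} and \cref{def:Sgcd}, and your two inclusions simply carry out that bookkeeping (in the forward direction the key point, which you use implicitly via $\gcd(\bw)=1$, is that no $p_i$ divides all entries of $\bw$, so $k_i=\max(0,-\ell_i)$ is indeed minimal). No gaps.
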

\begin{proof}
The result follows almost directly from \cref{relation btwn primitive and S-primitive} and the definition of $\sgcd$.
\end{proof}


\subsection{Mean values for the primitive and non-primitive Siegel transforms} \label{sec:meanvalues}
For $f: \Q_S^d \to \R$, define the \textit{$S$-primitive Siegel transform} by 
\begin{equation}\label{def: S-primitive Siegel transform}
\widehat{f}(\sg\Gamma_d) = \sum_{\bv \in P(\Z_S^d)} f(\sg \bv),
\end{equation}
and the \textit{$S$-Siegel transform by}
$$\widetilde{f}(\sg\Gamma_d) = \sum_{\bv \in \Z_S^d-\{\origin\}} f(\sg\bv)$$
for $\sg\Gamma_d \in \SG_d/\Gamma_d$.
More generally, Siegel transforms can be defined over the space of lattices in $\Q_S^d$.

For integrability criterion, we work with bounded functions of compact support, denoted $B_c(X)$. The space of semicontinuous functions which are bounded and of compact support is denoted by $B_c^{SC}(X)$.

\begin{rmk}\label{rmk: range of ftns}
We write the set of \textit{semicontinuous} real-valued functions on a space $X$, as $SC(X)$. Note that $f\in SC(X)$ is either upper semicontinuous or lower semicontinuous. Recall a function $f$ is upper (resp. lower) semicontinuous at a point $x_0 \in X$ if $\limsup_{x\to x_0} f(x) \leq f(x_0)$ (resp. $\liminf_{x\to x_0} f(x) \geq f(x_0)$). 
 Extending the class of functions beyond the standard continuous functions of compact support is useful since $SC(X)$ contains all characteristic functions of sets that are either open or closed. 

Also, though we will only work with real-valued functions, each of the integral formulas can be written for complex-valued functions by considering the real and imaginary parts separately.
\end{rmk}

In \cite{HLM2017} (c.f. \cite[Proposition~2.3]{Han21}), for any $f\in B_c^{SC}(\Q_S^d)$, $d\ge 2$, they show
\begin{equation} \label{eq:meanvalue}
\int_{\SG_d/\Gamma_d} \widetilde{f}(\sg\Gamma_d) \,d\mu_d(\sg) = \int_{\Q_S^d} f(\bsx)\,d\bsx,
\end{equation}
where $d\bsx=d\vol_S(\bsx)$. \Cref{prop:primmean} in the introduction is the primitive version of the above integral formula.
The proof is contained in \cref{A mean value formula} and uses \cref{lem:Reiszrep} stated in the next section.

  We conclude the discussion on mean values, and transition to the second moment formulas by discussing boundedness and connections to integrability. In the case of $d\geq 3$ the real case and $S$-arithmetic case are similar in the sense that $\widehat f$ is unbounded for any $f\in B_c^{SC}(\Q_S^d)$ for which $\supp (f)$ has an open interior. However when $d=2$ the behavior of $\widehat{f}$ is drastically different when $S = \{\infty\}$ versus having at least one prime included. Indeed when $d= 2$ and $S=\{\infty\}$, $\widehat f$ is bounded for any $f\in B_c^{SC}(\R^2)$ \cite[Theorem~16.1]{Veech98}, so now integrability of $\widehat{f}$ and higher moments are a direct consequence. However, when $S$ includes at least one prime, $\widehat{f}$ can be unbounded. Namely set $S=\{\infty,p\}$ and let $f\in B_c^{SC}(\Q_S^2)$ be the product of the characteristic functions of the closed ball of radius $1$. 
 Set for each $k\in \N$,
 \[
 \sg_k:=\left(\left(\begin{array}{cc}
 1/p^k & 0\\
 0 & p^k\end{array}\right), \left(\begin{array}{cc}
 1 & 0 \\
 0 & 1 \end{array}\right)\right).
 \]
 Then for $1\le \ell \le k$, 
 \[
 \sg_k(p^\ell \be_1)=\left(\left(\begin{array}{c}
 p^{-k+\ell} \\
 0\end{array}\right), \left(\begin{array}{c} 
 p^\ell \\
 0\end{array}\right)\right)\in \supp(f),
 \]
 so that $\widehat{f}(\sg_k\Gamma_2)\ge k$ and $\widehat{f}(\sg_k\Gamma_2)$ diverges to infinity as $k$ goes to infinity.

\subsection{Second moment primitive mean value formulas}\label{sec:secondmomentformulas}
In order to understand higher moments, we will consider the \textit{higher $S$-primitive Siegel transform} defined for $k\geq 1$ and $F: (\Q_S^d)^k \to \R$ by 
$$\widehat{F}(\sg\Gamma_d) = \sum_{(\bv^1,\ldots, \bv^k) \in P(\Z_S^d)^k} F(\sg\bv^1,\ldots, \sg \bv^k).$$
We will use the same notation for higher moments, as the definitions are determined by the domains of functions specified in each theorem statement.

We now give a representation theorem for the primitive Siegel transform and higher $S$-primitive Siegel transforms. To understand the distintion in the integrability criterion, we recall the case of higher moments of $\widetilde{f}$ which give upper bounds for $\widehat{f}$. Namely we have integrability from the mean value of $(\widetilde{f})^k$ as in \cite[Theorem~2.5]{Han21} for $d\geq 3$ and $1\leq k\leq d-1$. The case of $d=2$ is different as $(\widetilde{f})^2$ is not integrable even in the case of $\SL_2(\mathbb{R})/ \SL_2(\Z)$ \cite{EMM98}.
\begin{lem}\label{lem:Reiszrep}
    Let $d\geq 2$. There exists a unique regular $\SG_d$-invariant Borel measure $\nu$ on $\Q_S^d$ such that for $f \in B_c^{SC}(\Q_S^d)$,
    $$\int_{\SG_d/\Gamma_d} \widehat{f}(\sg\Gamma_d)\,d\mu_d(\sg) =  \int_{
    \Q_S^d} f(\bsx)\,d\nu(\bsx).$$
    For $d\geq 3$ and $k\leq d-1$, there exists a unique regular $\SG_d$-invariant Borel measure $\nu_k$ on $(\Q_S^d)^k$ such that for $F \in B_c^{SC}\left((\Q_S^d)^k\right)$,
    $$\int_{\SG_d/\Gamma_d} \widehat{F}(\sg\Gamma_d)\,d\mu_d(\sg) =  \int_{
    \Q_S^d} {F(\bsx_1, \ldots, \bsx_k)\,d\nu_k(\bsx_1, \ldots, \bsx_k)}.$$
\end{lem}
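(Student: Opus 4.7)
The plan is to apply the Riesz--Markov--Kakutani representation theorem to the positive linear functional
\[ L(f) := \int_{\SG_d/\Gamma_d} \widehat{f}(\sg\Gamma_d)\,d\mu_d(\sg), \]
initially defined on $C_c(\Q_S^d)$ and then extended to $B_c^{SC}(\Q_S^d)$. The main preliminary task is to verify that $L$ is well-defined, i.e., that $\widehat{f} \in L^1(\SG_d/\Gamma_d, \mu_d)$ whenever $f \in B_c^{SC}(\Q_S^d)$. For $d \geq 3$ this is straightforward: since $0 \leq \widehat{|f|} \leq \widetilde{|f|}$, the classical $S$-arithmetic mean value formula \eqref{eq:meanvalue} applied to $|f|$ yields $\widehat{|f|}\in L^1$. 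For the $k$-point version with $k\le d-1$, the analogous bound $\widehat{|F|}\le \widetilde{|F|}$ together with the $S$-arithmetic higher moment formula of \cite{Han21} gives $\widehat{F} \in L^1(\SG_d/\Gamma_d)$. The case $d=2$ is the main obstacle, as the excerpt's explicit example demonstrates that $\widehat{f}$ is unbounded as soon as $S_f \neq \emptyset$; I would defer the proof of $\widehat{f}\in L^1$ in this case to the analytic estimates of \cref{Proofs of Primitive integral formulas}, which use reduction theory on $\SL_2(\Q_S)/\Gamma_2$ to control the non-compact cusp contributions.

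Once integrability is in hand, $L$ is a well-defined positive linear functional on $C_c(\Q_S^d)$: linearity follows from linearity of the sum defining $\widehat{f}$ and of the integral, and positivity is immediate since $f \geq 0$ implies $\widehat{f} \geq 0$. The Riesz--Markov--Kakutani theorem on the locally compact Hausdorff space $\Q_S^d$ then yields a unique regular Borel measure $\nu$ with $L(f) = \int_{\Q_S^d} f\,d\nu$ for every $f \in C_c(\Q_S^d)$. To extend the identity from $C_c$ to $B_c^{SC}$, I would approximate a lower semicontinuous $f$ from below by an increasing sequence $f_n \in C_c(\Q_S^d)$ with $f_n \nearrow f$ (and dually from above in the USC case), apply monotone convergence on both sides of $L(f_n) = \int f_n\,d\nu$, and use the uniform compact support together with boundedness to justify the passage to the limit.

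It remains to prove $\SG_d$-invariance of $\nu$. For any $\sg_0 \in \SG_d$ and $f \in C_c(\Q_S^d)$, denote by $R_{\sg_0}f(\bsx) := f(\sg_0\bsx)$. A direct computation from the definition of $\widehat{f}$ gives the transformation rule $\widehat{R_{\sg_0}f}(\sg\Gamma_d) = \widehat{f}(\sg_0 \sg \Gamma_d)$, so by left $\SG_d$-invariance of the Haar measure $\mu_d$,
\[
\int_{\Q_S^d} f(\sg_0\bsx)\,d\nu(\bsx) = L(R_{\sg_0}f) = \int_{\SG_d/\Gamma_d} \widehat{f}(\sg_0\sg\Gamma_d)\,d\mu_d(\sg) = L(f) = \int_{\Q_S^d} f\,d\nu.
\]
This holds for all continuous compactly supported $f$, hence $\nu$ is $\SG_d$-invariant. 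Uniqueness of the representing measure is part of the conclusion of Riesz--Markov--Kakutani. The argument for $\nu_k$ on $(\Q_S^d)^k$ is identical, replacing $\widehat{f}$ by $\widehat{F}$ and appealing to the $k$-fold moment bound from \cite{Han21}; the restriction $k\le d-1$ enters precisely through the availability of this bound.
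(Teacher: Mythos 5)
Your overall structure --- Riesz--Markov--Kakutani on $C_c(\Q_S^d)$, extension to $B_c^{SC}$ by monotone convergence from both sides, and $\SG_d$-invariance via the change of variables $\sg\mapsto\sg_0\sg$ --- is exactly what the paper does. But you have a genuine misunderstanding in the treatment of $d=2$.

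You assert that $d=2$ is "the main obstacle" for integrability of $\widehat{f}$, citing the paper's example showing $\widehat{f}$ can be unbounded when $S_f\neq\emptyset$, and propose deferring to the analytic estimates of \cref{Proofs of Primitive integral formulas}. This conflates unboundedness with non-integrability. The pointwise bound $0\le \widehat{|f|}\le \widetilde{|f|}$ holds for \emph{all} $d\ge 2$, and \eqref{eq:meanvalue} gives $\widetilde{f}\in L^1(\SG_d/\Gamma_d)$ for all $d\ge 2$; hence $\widehat{f}\in L^1$ for $d=2$ too, with no appeal to any reduction theory or cusp analysis. The paper's proof uses this bound uniformly across all $d\geq 2$ and makes no case distinction. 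The hard analytic work you are pointing to (integrability of $\widehat F$ for $d=2$, \cref{thm: integrability for dim2}) is needed precisely for the \emph{second} moment in dimension two, which is explicitly excluded from the scope of this lemma: the $k$-point statement is restricted to $d\ge 3$. So the deferral is both unnecessary (the easy bound suffices) and misdirected (it points to estimates that address a different quantity). The rest of your argument --- including the $k$-point case via $\widehat{F}\le\widetilde{F}$ and the higher moment formula of \cite{Han21} for $k\le d-1$, $d\ge 3$ --- is correct.
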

\begin{proof}
     We will outline the standard approximation arguments needed for the first result for $d\geq 2$, and the second result follows identically since in all cases integrability is automatic using integrability in the non-primitive setting. 
     For $f \in C_c(\Q_S^d)$ we have {$|\widehat{f}|\leq |\widetilde{f}|$}, which is integrable. Thus $f\mapsto \int_{\SG_d/\Gamma_d} \widehat{f}\,d\mu_d(\sg)$ defines an $\SG_d$-invariant positive linear functional, implying by the Riesz--Markov--Kakutani theorem that there is a unique Borel measure $\nu$ where the integral formula holds for all $f \in C_c(\Q_S^d)$. Since every lower semicontinuous function with compact support bounded below can be approximated by a non-decreasing sequence $f_n \in C_c(\Q_S^d)$ converging pointwise to $f$ and moreover we have pointwise monotone convergence of $\widehat{f_n}$ to $\widehat{f}$, we can apply the monotone convergence theorem on each side of the representation. Similarly by taking the negative, we can extend the formula using the monotone convergence theorem for upper semicontinuous functions bounded from above. Thus the integral formula in fact holds by monotone convergence theorem for all $f\in B_c^{SC}(\Q_S^d)$. 
\end{proof}

We use the representation theorem in the case when $d\geq 3$, and obtain a formula for the second moment which is stated in \cref{thm:primrog} and is proved in \cref{sec:primrogers}. When $d\geq 4$ we know by \cref{lem:Reiszrep} that $\widehat{F}$ is integrable for $3\leq k \leq d-1$ and is represented by some measure $\nu_k$, so we could theoretically find formulas for $k\geq 3$, but in this case the possible sets invariant under the diagonal action of $\Gamma_d$ are numerous. Thus we will focus on the case when $k=2$.

The case when $d=2$ must be treated differently. The main reason is that $\SL_d(\Q_S)$ acts transitively on the nonzero points of $\Q_S^d \times \Q_S^d$ for $d\geq 3$, but when $d=2$ the action is no longer transitive with orbits restricted to {subsets of $\Q_S^2 \times \Q_S^2$ with fixed determinants.}

\begin{thm}[Primitive $S$-arithmetic second moment for $d=2$]\label{rank 2 1st form}
	For $F \in B_c^{SC}( \Q_S^2 \times \Q_S^2)$ with $F\geq 0$, it holds that $\widehat{F} \in L^1(\SG_2/\Gamma_2)$ and
	$$\int_{\SG_2/\Gamma_2} \widehat{F}(\sg\Gamma_2) \, d\mu_2(\sg) = \sum_{n\in \Z_S-\{0\}} \frac{\varphi({\rd(n)})}{\zeta_S(2)} \int_{\SG_2} F(\sg J_n) \, d\eta_2(\sg) + {\frac 1 {\zeta_S(2)}\sum_{k\in \Z_S^{\times}}\int_{\Q_S^2} F(\bsx,k\bsx)\,d\bsx}, $$
	where $\varphi(\cdot)$ is Euler $\varphi$-function and $J_n=\bpm
	1 & 0 \\ 0 & n\epm$.
\end{thm}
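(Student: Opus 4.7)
My plan is to exploit the $\Gamma_2$-orbit decomposition of $P(\Z_S^2)^2$ under the diagonal action $\g\cdot(\bv^1,\bv^2)=(\g\bv^1,\g\bv^2)$ and apply the folding-unfolding trick to each orbit sum. Since $\bv^1,\bv^2\in\Z_S^2$ are embedded diagonally, the quantity $n\defeq\det[\bv^1\mid\bv^2]\in\Z_S$ is the same at every place, and the orbits split into two classes: linearly dependent ($n=0$) and linearly independent ($n\neq 0$), producing the two terms on the right-hand side respectively.

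For the dependent class, because $\bv^1$ is primitive it spans a direct summand of $\Z_S^2$, so any $\bv^2$ proportional to $\bv^1$ must be of the form $k\bv^1$ for some $k\in\Z_S$; primitivity of $\bv^2$ then forces $k\in\Z_S^\times$. For each fixed $k\in\Z_S^\times$ the corresponding orbit sum is $\widehat{F_k}(\sg\Gamma_2)$ with $F_k(\bsx)\defeq F(\bsx,k\bsx)\in B_c^{SC}(\Q_S^2)$, so \cref{prop:primmean} directly outputs the second term of the formula.

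For the independent class, I normalize each orbit using primitivity of $\bv^1$ to a representative $\left(\be_1,\bpm a\\ n\epm\right)$ with $n\in\Z_S\setminus\{0\}$ and $a\in\Z_S$, and observe that the stabilizer of $\be_1$ in $\Gamma_2$ is $\left\{\bpm 1 & b\\ 0 & 1\epm:b\in\Z_S\right\}$, acting on the second coordinate by $a\mapsto a+bn$. Since the full pair-stabilizer is trivial, orbits biject with pairs $(n,a)$ where $n\in\Z_S\setminus\{0\}$ and $a$ ranges over $\Z_S/n\Z_S\cong\Z/\rd(n)\Z$ subject to $\sgcd(a,n)=1$; unwinding \cref{def:Sgcd} translates this condition into $\gcd(a,\rd(n))=1$, giving exactly $\varphi(\rd(n))$ orbits per $n$. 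Folding-unfolding then yields
\begin{equation*}
\int_{\SG_2/\Gamma_2}\sum_{(\bv^1,\bv^2)\in\mathcal{O}_{n,a}}F(\sg\bv^1,\sg\bv^2)\,d\mu_2(\sg\Gamma_2)=\int_{\SG_2}F(\sg\be_1,a\sg\be_1+n\sg\be_2)\,d\mu_2(\sg),
\end{equation*}
and the right-translation $\sg\mapsto\sg\bpm 1 & a/n\\ 0 & 1\epm$ (well-defined since $n$ is invertible in each $\Q_p$, $p\in S$) absorbs the $a$-shift and transforms the integrand into $F(\sg J_n)$. Substituting $\mu_2=\zeta_S(2)^{-1}\eta_2$ and summing first over $a$ and then over $n$ produces the first term.

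The main obstacle will be justifying integrability of $\widehat{F}$ and finiteness of the right-hand side. The hypothesis $F\geq 0$ lets Tonelli's theorem make the orbit-by-orbit computation valid in $[0,\infty]$, so it suffices to show that the right-hand side is finite. For $F$ supported in a compact set $K\times K\subset\Q_S^2\times\Q_S^2$, the condition $F(\sg J_n)\neq 0$ forces $|\sg\be_1|_p$ and $|n|_p|\sg\be_2|_p$ to be bounded at every $p\in S$; combined with $|\det\sg_p|_p=1$ and the sup-norm inequality $|\sg\be_1|_p\cdot|\sg\be_2|_p\geq 1$, this gives a uniform place-wise bound on $|n|_p$, confining the contributing $n$ to a compact subset of $\Q_S$ and hence, by discreteness of the diagonal embedding $\Z_S\hookrightarrow\Q_S$, to a finite set.
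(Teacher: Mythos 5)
Your orbit decomposition, the counting of $\varphi(\rd(n))$ orbits per determinant value $n$ (via $\sgcd(a,n)=1\Leftrightarrow\gcd(a,\rd(n))=1$), the folding--unfolding, the right-translation by $\bigl(\begin{smallmatrix}1 & a/n\\0 & 1\end{smallmatrix}\bigr)$ to kill the shear, and the substitution $\mu_2 = \zeta_S(2)^{-1}\eta_2$ all match the paper's proof; you even get the sign/normalization of the right-translation more cleanly than the paper does. Your finiteness argument for the first term --- bounding $|n|_p$ by compactness of $\supp F$, unimodularity, and the Hadamard-type inequality $\|\sg\be_1\|_p\|\sg\be_2\|_p\gtrsim 1$, then invoking discreteness of $\Z_S$ in $\Q_S$ --- is essentially the paper's observation that $\det(\supp F)\cap\Z_S$ is finite.

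However, there is a genuine gap in the integrability part. You observe, correctly, that the hypothesis $F\geq 0$ together with Tonelli reduces matters to showing the right-hand side is finite, but you then only address the linearly independent sum over $n\in\Z_S\setminus\{0\}$. You never show that
\[
\sum_{k\in\Z_S^\times}\int_{\Q_S^2} F(\bsx,k\bsx)\,d\bsx < \infty.
\]
This is not automatic: unlike the real case where $\Z^\times=\{\pm1\}$, here $\Z_S^\times=\{\pm p_1^{a_1}\cdots p_s^{a_s}\}$ is infinite, and the compactness of $\supp F$ does \emph{not} confine the contributing $k$ to a finite set (for any $k$ there are $\bsx$ close to $0$ with both $\bsx$ and $k\bsx$ in $\supp F$). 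What does save the sum is that $\vol_S(\{\bsx : \bsx,\, k\bsx \in \supp F\})$ decays like $1/\max(k_1,k_2)^2$ when one writes $k = \pm k_1/k_2$ in lowest terms, and summing this over $\Z_S^\times$ converges; making this rigorous is the paper's \cref{lem:boundedunits}, whose proof requires a nontrivial partition argument over subsets of $S_f$ together with geometric-series and ratio-test estimates. Without this lemma (or a comparable estimate), your claim that ``the right-hand side is finite'' --- and hence, via Tonelli, that $\widehat F\in L^1$ --- does not follow.
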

Notice that the input of $\rd(n)$ as a positive integer into the Euler $\varphi$-function is well defined. Namely, if $n=mp_1^{k_1}\cdots p_s^{k_s}\in \Z_S-\{0\}$, where $m$ or $-m\in \N_S$ and $k_1, \ldots, k_s\in \Z$, then $\rd(n)=\prod_{p\in S} |n|_p=|m|_\infty \in \N_S$.

We prove \cref{rank 2 1st form} in \cref{sec:secondmoment d=2}. Building on \cref{rank 2 1st form}, for the applications with $d=2$, we want to compute integral formulas over \emph{the cone associated with a fundamental domain $\mathcal F\subseteq \SG_2 (\subseteq (\Q_S^2)^2)$} defined by 
\begin{equation}\label{def of cone}
\begin{split}
C_S = C_{S,\mathcal{F}} &\simeq \mathcal{F} \times \Interval_1\\
\sv^{1/2}\sg &\leftrightarrow (\sg, \sv),
\end{split}
\end{equation}
where $\Interval_1 = \Big((0,1]\times \prod_{p\in S_f} (1+L_p\Z_p)\Big)$.
Recall that $L_p=p$ if $p\neq 2$ and $L_2=2^3$.
Assign the measure $\mu_{C_S}$ on $C_S$ by the product measure $\mu_2\times \vol_S$ so that $\mu_{C_S}(C_S)=1/L_S$. In order to obtain the correct scaling factors, we will need to take the square root of elements in $C_S$. As in the real case, for odd $p$ the square root is also well-defined from $1+p\Z_p$ to $1+p\Z_p$ by Hensel's Lemma. However in the case of $p=2$, the map is well defined when we consider the image from $1+8\Z_2$ to $1+4 \Z_2$. 


\begin{defn}\label{def:phisummation}
    Define a function $\Phi_S(\sx)$, for $\sx\in \Q_S$, by
\begin{equation}\label{Phi function}
\Phi_S(\sx)=\left\{\begin{array}{cl}
 \rd(\sx)\underset{m\in \N_\sx}{\sum} \dfrac {\varphi(m)} {m^3},&\text{if }\sx\in \prod_{p\in S} (\Q_p-\{0\});\\
 0, &\text{otherwise,}
 \end{array}\right.
\end{equation}
where $\N_\sx$ for $\sx\in \prod_{p\in S} (\Q_p-\{0\})$, is the subset of $\N_S$ given by
\[
\N_\sx=
\left\{m\in \N_S : m\ge \rd(\sx)\;\text{and}\;
m\equiv 
\mathrm{sign}(x_\infty)x_p\Big(\prod_{p\in S_f}  |x_p|_p\Big)\;\mod L_p\;\;\text{for each}\; p\in S_f\right\}, 
\]

where $\mathrm{sign}(x_\infty)=x_\infty/|x_\infty|_\infty$.
\end{defn}
 
\begin{prop}[Primitive $S$-arithmetic integral formula over cone for $d=2$]\label{integral over the cone}~
Let $\SG_2=\SL_2(\Q_S)$ and $\Gamma_2=\SL_2(\Z_S)$. 
Let $C_S$ be the cone defined as in \eqref{def of cone} for some fixed fundamental domain for $\SG_2/\Gamma_2$.
We have the following. 
\begin{enumerate}
\item For $f\in B_c^{SC}(\Q_S^2)$, the function
\[(\sg, \sv) \mapsto \rd(\sv) \widehat f\left(\sv^{1/2}\sg \Gamma_2\right)
\]
is in $L^1(C_S)$ and
\[\begin{split}
\int_{C_S} \rd(\sv) \widehat f \left(\sv^{1/2}\sg \Gamma_2\right) d\mu_2(\sg) d\sv
=\frac 1 {L_S\zeta_S(2)} \int_{\Q_S^2} f(\bsx) d\bsx.\\
\end{split}\]
\item For $F \in B_c^{SC}( \Q_S^2 \times \Q_S^2)$, the function
\[
(\sg, \sv) \mapsto \rd(\sv)^2 \widehat F\left(\sv^{1/2}\sg \Gamma_2\right)
\]
is in $L^1(C_S)$ and
\[\begin{split}
&\int_{C_S} \rd(\sv)^2 \widehat F \left(\sv^{1/2}\sg \Gamma_2\right) d\mu_2(\sg) d\sv\\
&\hspace{0.2in}=\frac 1 {\zeta_S(2)}\int_{(\Q_S^2)^2} \Phi_S(\det\left(\bsx, \bsy\right))
	F\left(\bsx, \bsy\right) d\bsx d\bsy +\frac 1 {2L_S \zeta_S(2)}\sum_{k\in \Z_S^\times}\int_{\Q_S^2} F(\bsx, k\bsx) d\bsx,
\end{split}\]
where we define $\det\left(\bsx, \bsy \right)=\left(\det \left(\bx_p, \by_p\right)\right)_{p\in S}\in \Q_S$.
\end{enumerate}
\end{prop}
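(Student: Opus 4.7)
Both parts reduce, via the fundamental-domain integral formulas (\cref{prop:primmean} for (1) and \cref{rank 2 1st form} for (2)) applied to the rescaled test functions $f_\sv(\bsx) := f(\sv^{1/2}\bsx)$ and $F_\sv(\bsx,\bsy) := F(\sv^{1/2}\bsx, \sv^{1/2}\bsy)$, to change-of-variable computations on $\Interval_1$. The key identity is $\widehat{f}(\sv^{1/2}\sg\Gamma_2) = \widehat{f_\sv}(\sg\Gamma_2)$ (and similarly for $F_\sv$), and both $f_\sv$ and $F_\sv$ remain in the appropriate $B_c^{SC}$ class since $\sv^{1/2}$ is invertible in $\Q_S$ for $\sv \in \Interval_1$.

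For part (1), applying \cref{prop:primmean} to $f_\sv$ and then substituting $\bsy = \sv^{1/2}\bsx$ (Jacobian $\rd(\sv)$) on the right-hand side yields
\[
\int_{\SG_2/\Gamma_2}\widehat{f}(\sv^{1/2}\sg\Gamma_2)\,d\mu_2(\sg) \;=\; \frac{1}{\rd(\sv)\,\zeta_S(2)} \int_{\Q_S^2} f(\bsy)\,d\bsy.
\]
Multiplying by $\rd(\sv)$ and integrating over $\sv \in \Interval_1$, using $\vol_S(\Interval_1)=1/L_S$, gives (1); $L^1$-integrability on $C_S$ follows by running the same computation with $|f|$ in place of $f$.

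The diagonal term in (2) is handled in the same spirit: the scaling $\bsy = \sv^{1/2}\bsx$ converts $\int F_\sv(\bsx,k\bsx)\,d\bsx$ into $\rd(\sv)^{-1}\int F(\bsy,k\bsy)\,d\bsy$, so multiplying by $\rd(\sv)^2$ and integrating produces the factor $\int_{\Interval_1}\rd(\sv)\,d\sv = \tfrac12\prod_{p\in S_f}\vol_p(1+L_p\Z_p) = \tfrac{1}{2L_S}$, yielding the desired second term. For the off-diagonal contribution, fix $n \in \Z_S - \{0\}$ and perform the change of variables $(\sg,\sv) \mapsto M := \sv^{1/2}\sg J_n$. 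Using the Bruhat parametrization $\sg = \bsm 1 & 0 \\ \sc & 1 \esm \bsm \sa & \sb \\ 0 & \sa^{-1}\esm$ with $d\eta_2(\sg) = d\sa\,d\sb\,d\sc$, a direct cofactor calculation (setting $\sw := \sv^{1/2}$) gives the absolute Jacobian of $(\sa,\sb,\sc,\sv) \mapsto (m_{ij})$ equal to $\rd(\sv n^2)=\rd(\sv)\rd(n)^2$ at every place of $S$; at $p=2$ the factor $|2|_2$ appearing in the cofactor cancels against $|2|_2^{-1}$ from the substitution $d\sv = 2\sw\,d\sw$. Identifying $M$ with its columns $(\bsx,\bsy)$, which satisfy $\det(\bsx,\bsy)=n\sv$, the image of the map is $\{(\bsx,\bsy) : \det(\bsx,\bsy)/n \in \Interval_1\}$, and the integrand $\rd(\sv)^2 F(M)$ transforms to $\rd(\det(\bsx,\bsy))F(\bsx,\bsy)/\rd(n)^3$ against $d\bsx\,d\bsy$.

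The final step, which I expect to be the main bookkeeping obstacle, is interchanging the sum over $n$ with the integral over $(\bsx,\bsy)$ and matching the result with the definition of $\Phi_S$. For each $\alpha := \det(\bsx,\bsy) \in \prod_{p\in S}(\Q_p-\{0\})$, the condition $\alpha/n \in \Interval_1$ pins down the admissible $n$'s as follows: the $p$-adic valuation $|n_p|_p$ must equal $|\alpha_p|_p$ (fixing the $p$-primary part of $n$); the sign of $n_\infty$ matches $\mathrm{sign}(\alpha_\infty)$ with $|n_\infty|_\infty \geq |\alpha_\infty|_\infty$, equivalent to $m := \rd(n) \geq \rd(\alpha)$; and the unit condition $\alpha_p/n_p \in 1+L_p\Z_p$ translates into the congruence on $m$ modulo $L_p$ stated in \eqref{Phi function}. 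These are exactly the defining conditions of $\N_\alpha$, so
\[
\sum_{n:\,\alpha/n\in\Interval_1}\frac{\varphi(\rd(n))}{\rd(n)^3} \;=\; \sum_{m\in\N_\alpha}\frac{\varphi(m)}{m^3} \;=\; \frac{\Phi_S(\alpha)}{\rd(\alpha)}.
\]
Combined with the factor $\rd(\alpha)$ already present in the integrand, and with Fubini (justified beforehand by running the above with $|F|$, which also gives $L^1$-integrability on $C_S$), this produces the first term in (2) and completes the proof.
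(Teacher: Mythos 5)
Your approach matches the paper's at the level of strategy: apply \cref{prop:primmean} (resp.\ \cref{rank 2 1st form}) to the rescaled test function, change variables, and integrate over $\Interval_1$. The handling of part (1) and the diagonal term of (2) is identical to the paper. For the off-diagonal term, the paper splits the change of variables into two steps — first $\sg' = \sg\sh_\sv^{-1}$ together with $\sx = \sv n$, then $(\sg',\sx)\mapsto(\bsx,\bsy)$ with per-place Jacobian $|\sx|_p$ — whereas you compute the $4\times 4$ Jacobian of $(\sa,\sb,\sc,\sv)\mapsto M=\sv^{1/2}\sg J_n$ in one shot. Your claimed Jacobian $\rd(\sv)\rd(n)^2$ is correct (it agrees with the product of the paper's two-step Jacobians), your description of the image as $\{(\bsx,\bsy):\det(\bsx,\bsy)/n\in\Interval_1\}$ is right, and your matching of the admissible $n$ with $\N_\alpha$ so that the inner sum equals $\Phi_S(\alpha)/\rd(\alpha)$ is exactly the paper's Tonelli rearrangement. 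So the change-of-variables bookkeeping is sound, just organized differently.

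The gap is in the $L^1$ claim. You write that running the computation with $|F|$ ``also gives $L^1$-integrability on $C_S$,'' but Tonelli only justifies rearranging the nonnegative sum/integral; it does not by itself show the result is finite. To conclude that $(\sg,\sv)\mapsto\rd(\sv)^2\widehat F(\sv^{1/2}\sg\Gamma_2)$ is in $L^1(C_S)$, you must still argue that the right-hand side of the formula — in particular
$\int_{(\Q_S^2)^2}\Phi_S(\det(\bsx,\bsy))\,|F|(\bsx,\bsy)\,d\bsx\,d\bsy$ — is finite, which requires control on $\Phi_S$. The paper establishes this separately using \cref{Euler ftn-sum formula} and \cref{application of Abel summation formula}, which give $\Phi_S(\sx)=\tfrac1{L_S\zeta_S(2)}+O_{L_S}(\rd(\sx)^{-1}\log\rd(\sx))$ for $\rd(\sx)>r_0$ and uniform boundedness for $\rd(\sx)\le r_0$, and then carries out a log-integral estimate over the (compact) support of $F$. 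Your proposal skips this step entirely. (You can shortcut the paper's full asymptotic by noting $\Phi_S(\sx)\le\rd(\sx)\sum_{m\ge\rd(\sx)}m^{-2}$ is uniformly bounded, but some such argument must appear; without it the integrability half of the proposition is unproved.) Also note that \cref{lem:boundedunits} is what guarantees finiteness of the $\Z_S^\times$-sum — worth citing explicitly rather than leaving implicit.
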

\begin{rmk}
Our choice of normalizing factor in the integral formula of
\[
\rd(\sv)\widehat{f}(\sv^{1/2}\sg\Gamma_2)
\]
(and hence $\rd(\sv)^2\widehat{F}(\sv^{1/2} \sg\Gamma_2)$) in \cref{integral over the cone} instead of $\widehat{f}(\sv^{1/2}\sg\Gamma_2)$ on $C_S$ comes from the following justification. Consider $f$ as the characteristic function of a Borel set $A\subseteq \Q_S^2$ of large volume. In this case, the expected value of $\widehat{f}(\Lambda)$ at the lattice $\Lambda=\sv^{1/2}\sg\Gamma_2$ is a function of $\rd(\sv)$, namely the volume of $A$ divided by the product of the covolume of $\sv^{1/2}\sg\Z_S^2$ and $\zeta_S(2)$.
Hence by multiplying $\widehat{f}$ by the covolume, one can obtain the scalar expectation value at a lattice on $C_S$ which is $\vol_S(A)/\zeta_S(2)$. This scalar expectation gives the correct scaling when changing variables from integrating over $C_S$ to $(\Q_S^2)^2$ in the second moment formula.
\end{rmk}
We will prove \cref{integral over the cone} in \cref{Integral formulas over Cone}.
Moreover, we will see in \cref{application of Abel summation formula} that $\Phi_S$ is $1/(L_S \zeta_S(2))$ up to a controlled error term. This will allow us to approximate the second moment formula in \cref{integral over the cone} (2) by
\[
\frac 1 {L_S\zeta_S(2)^2}
\int_{(\Q_S^2)^2} F(\bsx, \bsy) d\bsx d\bsy
+\frac 1 {2L_S \zeta_S(2)}\sum_{k\in \Z_S^\times}\int_{\Q_S^2} F(\bsx, k\bsx) d\bsx,
\]
which is close to the second moment formula for the higher-dimensional case in \cref{thm:primrog}. Thus we are able to use the volume of our sets for the main estimates after sufficiently controlling the error terms.

\subsection{Error terms} \label{sec:errorterms}
We now state the error terms obtained as an application of the second moment formulas in full generality. When comparing to \cref{thm:starshaped}, notice that the exponents are weaker without the additional structure of the sets.

For each $p\in S$, we consider the element $\T = (T_p)_{p\in S} \subset (\R_{\geq 0})^{s+1}$ given by $T_\infty \in \R_{\geq 0}$, and for each $p\in S_f$, $T_p \in \{p^z: z\in \Z\}$. We include a partial ordering via $\T \succeq \T'$ whenever $T_p \geq T_p'$ for all $p\in S$.

\begin{thm}\label{Schmidt Main Theorem}
Consider a collection of positive volume Borel sets $\mathcal F=\left\{A_\T \right\}_{\T= (T_p)_{p\in S}\in \mathcal T}$ so that 
\begin{enumerate}[label=(\alph*)]
\item for each $\T\in \mathcal T$, $A_\T = \prod_{p\in S} (A_\T)_{p}$ for Borel sets $(A_\T)_{p} \subseteq \Q_p^d$ with $\vol_p((A_\T)_p)=T_p$;
\item $A_{\T_1} \subseteq A_{\T_2}$ when $\T_2 \succeq \T_1$;
\item $\mathcal T$ is unbounded (as a subset of $(\R_{\ge 0})^{s+1}$), and for each $p\in S_f$, $\min_{\T\in \mathcal{T}}\{T_p\}>0.$
\end{enumerate}
We have the following two cases.
\begin{enumerate}
\item Let $d\ge 3$ and $\delta\in (\frac 2 3, 1)$. For almost all $\sg\in \SL_d(\Q_S)$
\[
\# \left(\sg P(\Z_S^d) \cap A_\T \right)
= \frac 1 {\zeta_S(d)}\vol_S(A_\T)+ O_{\sg}\left(\vol_S(A_\T)^{\delta}\right),
\]
where the dependency on $\sg$ means $\T\succeq \T_0$ for some $\T_0=\T_0(\sg)$.
\item Let $d=2$. 
Take a sequence $(\T_\ell)_{\ell\in \N}$ such that there are $\delta_1, \delta_2\in (0,1)$ such that
\[\begin{gathered}
\sum_{\ell=1}^\infty \vol_p\left((A_{\T_\ell})_p\right)^{1-2\delta_1} < \infty,\; \forall p\in S;\\
\sum_{\ell=1}^\infty \vol_S\left(A_{\T_\ell}\right)^{1+\delta'-2\delta_2}<\infty\hspace{0.1in}\text{for some}\; \delta'>0.
\end{gathered}\]
Then for almost all $\sg\in C_S$,
\[\begin{split}
&\sd(\det \sg)\#\left(\sg P(\Z_S^d) \cap A_{\T_\ell} \right) 
= \frac 1 {\zeta_S(2)}\vol_S(A_{\T_\ell})\\ 
&\hspace{0.5in}+ O\left(\sum_{p\in S} \vol_p\left((A_{\T_\ell})_p\right)^{\delta_1}\prod_{p'\in S-\{p\}} \vol_{p'}\left( (A_{\T_\ell})_{p'}\right)\right)
+ O\left(\vol_S(A_{\T_{\ell}})^{\delta_2}\right).
\end{split}\]
\end{enumerate}
\end{thm}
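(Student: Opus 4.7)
The plan follows the Schmidt--Rogers paradigm adapted to the $S$-arithmetic setting. In both cases one identifies the lattice-point count with the primitive Siegel transform $\widehat{\chi_{A_\T}}$, computes its mean via Proposition~\ref{prop:primmean} (or the cone analogue Proposition~\ref{integral over the cone}(1)), controls its variance via the second-moment formula (Theorem~\ref{thm:primrog} or Proposition~\ref{integral over the cone}(2)), and applies Chebyshev's inequality together with Borel--Cantelli along a discretised sequence of parameters, closing by an interpolation step that uses the monotonicity hypothesis (b).

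For case (1), $\#(\sg P(\Z_S^d)\cap A_\T)=\widehat{\chi_{A_\T}}(\sg\Gamma_d)$ has mean $\vol_S(A_\T)/\zeta_S(d)$ and, after subtracting the square of the mean from the second-moment formula applied to $F=\chi_{A_\T}\otimes\chi_{A_\T}$, the variance equals
\[
\frac{1}{\zeta_S(d)}\sum_{k\in \Z_S^\times}\vol_S\bigl(A_\T\cap k^{-1}A_\T\bigr).
\]
Because $A_\T=\prod_p (A_\T)_p$ is a product set and multiplication by $k=\pm p_1^{k_1}\cdots p_s^{k_s}\in\Z_S^\times$ scales the $p$-adic component by $|k|_p^d$ (while preserving $\vol_S$ globally), each intersection factors across $p\in S$ and a geometric-series estimate bounds the sum by $C_{d,S}\vol_S(A_\T)$. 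Chebyshev then shows that a deviation larger than $\vol_S(A_\T)^{1/2+\eta}$ occurs with probability $O(\vol_S(A_\T)^{-2\eta})$. Choosing a sequence $\T_j$ with $\vol_S(A_{\T_j})\asymp j^{\alpha}$ and requiring $\alpha\eta>1/2$, Borel--Cantelli produces the estimate along $\{\T_j\}$; for arbitrary $\T$ we sandwich $\T_j\preceq\T\preceq\T_{j+1}$ using (b), picking up an interpolation error of order $\vol_S(A_\T)^{1-1/\alpha}$. Balancing $1-1/\alpha$ against $1/2+\eta$ subject to $\alpha\eta>1/2$ produces any $\delta\in(2/3,1)$.

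For case (2), parametrise $C_S\simeq\mathcal F\times\Interval_1$ and consider $X(\sg,\sv)=\rd(\sv)\widehat{\chi_{A_{\T_\ell}}}(\sv^{1/2}\sg\Gamma_2)$; since $\det(\sv^{1/2}\sg)=\sv$ in dimension two, $X$ is precisely the left-hand side of the theorem. Proposition~\ref{integral over the cone}(1), normalised by $\mu_{C_S}(C_S)=1/L_S$, gives $E[X]=\vol_S(A_{\T_\ell})/\zeta_S(2)$, matching the target leading term. Proposition~\ref{integral over the cone}(2) applied to $F=\chi_{A_{\T_\ell}}\otimes\chi_{A_{\T_\ell}}$ decomposes $E[X^2]$ into a weighted integral $\int \Phi_S(\det(\bsx,\bsy))F\,d\bsx\,d\bsy$ plus an off-diagonal sum $\sum_{k\in\Z_S^\times}\int F(\bsx,k\bsx)\,d\bsx$. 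Invoking the asymptotic $\Phi_S(\sx)=1/(L_S\zeta_S(2))+O(\rd(\sx)^{-\delta'})$ furnished by \cref{application of Abel summation formula}, the first piece produces $(E[X])^2$ plus an error $O(\vol_S(A_{\T_\ell})^{2-\delta'})$, while the second contributes $O\bigl(\sum_p\vol_p((A_{\T_\ell})_p)\prod_{p'\neq p}\vol_{p'}((A_{\T_\ell})_{p'})\bigr)$ by the same product-set estimate as in case (1). Chebyshev applied separately to each of these two variance contributions, combined with the two summability hypotheses on $(\T_\ell)$ and Borel--Cantelli, extracts the two error exponents $\delta_1$ (from the off-diagonal piece) and $\delta_2$ (from the $\Phi_S$-error piece). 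No interpolation step is needed because only the prescribed sequence is controlled.

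The main obstacle will be case (2): because $\Phi_S$ is not a scalar constant, the square of the mean does not cancel cleanly out of $E[X^2]$, so a genuine power-saving asymptotic for a congruence-constrained Euler sum is required to produce a viable variance bound—this is precisely what forces two independent error exponents rather than the single $\delta$ of case (1). A secondary technical point is that the hypothesis $\min_{\T\in\mathcal T}T_p>0$ for $p\in S_f$ is essential for keeping the product-set variance bound and the Borel--Cantelli step meaningful, since otherwise a single $p$-adic slice could degenerate and ruin the geometric-series factorisation underlying both cases.
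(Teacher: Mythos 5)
Your overall architecture (mean value, second moment, Chebyshev, Borel--Cantelli, then interpolation via monotonicity) is the same as the paper's, and your case (1) variance identity and its bound $C_{d,S}\vol_S(A_\T)$ (the paper's \cref{var upper bound}(1) via \cref{thm:primrog} and \cref{lem:boundedunits}) are correct. However, your discretisation step in case (1) has a gap: you sandwich an arbitrary $\T$ between consecutive elements $\T_j\preceq\T\preceq\T_{j+1}$ of a single chain, but $\T$ is a multi-parameter $(s+1)$-tuple and the order $\succeq$ is only partial, so a one-parameter sequence chosen by volume alone need not be comparable to a given $\T$ (e.g.\ when $\T$ has a large $p$-adic coordinate and small real coordinate). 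The paper fixes this by enlarging $\mathcal T$, and for each volume level $k^\alpha$ sandwiching only in the real coordinate while keeping the finite-place coordinates of $\T$; the Borel--Cantelli event at level $k$ is then a union over \emph{all} admissible finite-place configurations with $\vol_S(A_\T)=k^\alpha$, of which there are at most $\prod_{p\in S_f}\log_p(k/K)^\alpha$, and this polylogarithmic counting factor must be carried through the summability check $\alpha(1-2\beta)<-1$. Your argument as written controls only the $\T$ comparable to your chain, which is strictly weaker than the stated ``for all $\T\succeq\T_0(\sg)$''.

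In case (2) the gap is more serious: your identification of the two variance contributions is structurally wrong, and the key technical input is missing. The linearly dependent ($k\in\Z_S^\times$) piece contributes only $O(\vol_S(A))$ (exactly as in \cref{lem:boundedunits}); it does \emph{not} produce the error carrying the exponent $\delta_1$. That error, $\vol_S(A)\sum_{p\in S}\prod_{p'\neq p}\vol_{p'}(A_{p'})$, comes from linearly independent but nearly parallel pairs, i.e.\ from the region $\rd(\det(\bsx,\bsy))\le r_0$ where the asymptotic of \cref{application of Abel summation formula} does not apply and $\Phi_S$ is only boundedly controlled; estimating the measure of such pairs in $A\times A$ requires the determinant-level-set bounds of \cref{Schmidt Lemme 3} (the $S$-arithmetic analogue of Schmidt's Lemma 5, including the new $p$-adic level-set estimate), which your proposal never invokes. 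Likewise your claimed bound $O(\vol_S(A)^{2-\delta'})$ for the $\Phi_S$-deviation on the large-determinant region is unjustified: plugging in $\Phi_S=c+O(\rd(\det)^{-1}\log\rd(\det))$ gives an integrand that must be integrated over level sets of the determinant (again \cref{Schmidt Lemme 3}(2),(4)), and the correct outcome is $O\!\left(\vol_S(A)(\log\vol_S(A))^{2+s}\right)$, which is what the hypothesis $\sum_\ell\vol_S(A_{\T_\ell})^{1+\delta'-2\delta_2}<\infty$ is calibrated against. With your attribution of errors, the Chebyshev thresholds $\sum_p\vol_p((A_{\T_\ell})_p)^{\delta_1}\prod_{p'\neq p}\vol_{p'}((A_{\T_\ell})_{p'})$ and $\vol_S(A_{\T_\ell})^{\delta_2}$ do not match the variance bounds, so the two summability hypotheses of the theorem would not yield the stated conclusion; the proof needs the variance estimate of \cref{var upper bound}(3) (or an equivalent derivation through \cref{Schmidt Lemme 3}) rather than the bounds you propose.
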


Notice that the convergence condition on the first summation in \cref{Schmidt Main Theorem} (2) implies that we need $\vol_p((A_{\T_{\ell}})_p)$ to diverge to infinity as $\ell$ goes to infinity for all places $p\in S$, causing the dimensional restriction in \cref{Schmidt Prim+Cong Theorem}. Moreover for $d\geq 3$, we can obtain a better exponent if we limit our Borel sets to dilates of star-shaped sets.


\begin{thm}\label{thm:starshaped}
$d\ge 3$. Let $A\subseteq \Q_S^d$ be the star-shaped Borel set given by a function $\rho=\prod_{p\in S} \rho_p$, where for each $p\in S$, $\rho_p$ is a positive function on $\{\bv_p\in \Q_p^d: \|\bv_p\|_p=1\}$. I.e., $A=\prod_{p\in S} A_p$, where
\[\begin{gathered}
A_\infty=\left\{\bv_\infty \in \R^d : \|\bv_\infty\|_\infty < \rho_\infty (\|\bv_\infty\|_\infty^{-1} \bv_\infty)\right\};\\
A_p=\left\{\bv_p \in \R^d : \|\bv_p\|_p < \rho_p (\|\bv_p\|_p \bv_p)\right\},\; p\in S_f.
\end{gathered}\] 

Consider the set $\{\T A = \prod_{p\in S} T_pA_p\}_\T$ of dilates of $A$, where $\T=(T_p)_{p\in S}$. For any $\delta\in (\frac 1 2, 1)$ and for almost all $\sg\in \SL_d(\Q_S)$,
\[
\#(\sg P(\Z_S^d)\cap \T A)=\frac 1 {\zeta_S(d)}\vol_S(\T A)+O_{ \sg}\left(\vol_S(\T A)^\delta\right),
\]
 where the dependency on $\sg$ means for all $\T\succeq \T_0$ for some $\T_0=\T_0(\sg)$
 \end{thm}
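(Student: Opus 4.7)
The plan is to apply the second moment method to the indicator function $\chi_\T := \one_{\T A}$, combining \cref{prop:primmean} with \cref{thm:primrog} for the variance; the star-shape of $A$ then furnishes the monotonicity needed for a tight sandwich interpolation, which is the source of the improved exponent $\delta\in(1/2,1)$ compared with the general Borel case in \cref{Schmidt Main Theorem}(1).

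First I would observe that $\widehat{\chi_\T}(\sg\G_d)=\#(\sg P(\Z_S^d)\cap \T A)$, which by \cref{prop:primmean} has mean $\vol_S(\T A)/\zeta_S(d)$. Applying \cref{thm:primrog} to $F(\bsx,\bsy)=\chi_\T(\bsx)\chi_\T(\bsy)$ identifies the variance as
\[
V_\T=\frac{1}{\zeta_S(d)}\sum_{k\in \Z_S^\times} \vol_S(\T A\cap k^{-1}\T A).
\]
Using the product factorization $\T A=\prod_{p\in S} T_p A_p$ and the star-shape at each place, a place-by-place intersection gives $\vol_p(T_p A_p\cap k^{-1}T_p A_p)\le T_p^d\vol_p(A_p)/\max(1,|k|_p)^d$. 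Taking the product and invoking $\rd(k)=1$ yields
\[
\vol_S(\T A\cap k^{-1}\T A)\le \vol_S(\T A)/P(k)^d,\quad P(k):=\prod_{p\in S}\max(1,|k|_p).
\]
The sum $\sum_{k\in \Z_S^\times} P(k)^{-d}$ factors into finitely many convergent geometric series (decomposing $k=\pm p_1^{e_1}\cdots p_s^{e_s}$ by the signs of the exponents at each place), producing $V_\T=O(\vol_S(\T A))$.

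Next I would run Chebyshev plus Borel--Cantelli along a countable subfamily. Since for $p\in S_f$ the parameter $T_p$ already takes values in the discrete set $p^{\Z}$, only the archimedean component needs to be discretized: fix $\delta\in(1/2,1)$ and $\alpha>1$, and set
\[
\mathcal{T}_0=\left\{\T: T_\infty\in\alpha^\N,\;T_p=p^{z_p}\text{ for }z_p\in \Z\right\}.
\]
By Chebyshev and the variance bound, for each $\T\in \mathcal{T}_0$,
\[
\mu_d\left\{\sg\G_d : \left|\widehat{\chi_\T}(\sg\G_d)-\vol_S(\T A)/\zeta_S(d)\right|>\vol_S(\T A)^\delta\right\}\ll \vol_S(\T A)^{1-2\delta}.
\]
Since $\vol_S(\T A)$ is multiplicative in $T_\infty$ and the $p^{z_p}$, the total sum over $\mathcal{T}_0$ (restricted to $\T\succeq$ any fixed starting index) is a product of convergent geometric series in $\log_\alpha T_\infty$ and in each $z_p$, as $1-2\delta<0$. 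Borel--Cantelli then gives the estimate for almost every $\sg$ simultaneously over all but finitely many $\T\in \mathcal{T}_0$.

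Finally, for an arbitrary $\T$, the star-shape monotonicity yields $\T' A\subseteq \T A\subseteq \T'' A$ whenever $\T'\preceq \T\preceq \T''$. For $p\in S_f$ the discreteness lets us take $T_p'=T_p''=T_p$, so only the archimedean component requires interpolation: choose $n$ with $\alpha^n\le T_\infty\le \alpha^{n+1}$ and set $T_\infty'=\alpha^n$, $T_\infty''=\alpha^{n+1}$. Then $\vol_S(\T''A)/\vol_S(\T'A)=\alpha^d$, and monotonicity of the counting function transfers the pointwise estimate from $\mathcal{T}_0$ to every $\T\succeq \T_0(\sg)$, completing the proof. The main obstacle I anticipate is cleanly handling the convergence of $\sum_{k\in \Z_S^\times} P(k)^{-d}$: the place-by-place volume estimate is routine, but splitting $\Z_S^\times$ by the signs of the $p$-adic exponents to reduce to a product of geometric sums is delicate book-keeping, especially in verifying uniformity as $|S_f|$ grows.
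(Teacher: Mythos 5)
Your mean/variance computation is fine (it is essentially \cref{prop:primmean}, \cref{thm:primrog} and \cref{lem:boundedunits}, i.e.\ \cref{var upper bound}(1)), but the final interpolation step has a genuine gap, and it is exactly where the claimed exponent range $\delta\in(\tfrac12,1)$ is lost. With the geometric grid $T_\infty\in\alpha^{\N}$ for a \emph{fixed} $\alpha>1$, sandwiching $\T'A\subseteq \T A\subseteq \T''A$ with $\vol_S(\T''A)=\alpha^d\vol_S(\T'A)$ only gives
\[
\#(\sg P(\Z_S^d)\cap \T A)=\frac{\vol_S(\T A)}{\zeta_S(d)}+O\bigl((\alpha^d-1)\vol_S(\T A)\bigr)+O\bigl(\vol_S(\T A)^{\delta}\bigr),
\]
i.e.\ the discretization error is \emph{linear} in the volume, not $O(\vol_S(\T A)^{\delta})$; so the transferred estimate is trivial. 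If instead you refine the grid so that consecutive volumes differ by $O(\vol^{\delta})$ (e.g.\ volumes $\approx k^{\alpha}$ as in the proof of \cref{Schmidt Main Theorem}(1)), then the interpolation needs $1-\tfrac1\alpha\le\delta$ while your Chebyshev--Borel--Cantelli sum needs $\alpha(1-2\delta)<-1$; these are compatible only for $\delta>\tfrac23$. Thus a per-set Chebyshev bound plus sandwiching can never reach $\delta\in(\tfrac12,\tfrac23]$, which is the whole content of the star-shaped theorem beyond \cref{Schmidt Main Theorem}(1).

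The paper's proof avoids this barrier by following Schmidt's chaining argument rather than applying Chebyshev to each dilate separately: after reducing to $\vol_S(\T A)\in\vol_S(A)\N$, one writes the discrepancy at an arbitrary time as a sum of $O(\log)$ dyadic increments $\widehat{\one}_{\T_2A-\T_1A}$ over the pairs in $K_T$, bounds the \emph{sum of variances of these set differences} using \cref{var upper bound}(2) (this is where the product/star-shaped structure and the difference-variance bound are really needed), and then runs the maximal estimate of \cite[Lemmas 2 and 3]{Schmidt60} with $\psi(s)=s^2$, yielding an error $\vol_S(\T A)^{1/2}$ times logarithmic factors, hence $O(\vol_S(\T A)^{\delta})$ for every $\delta>\tfrac12$. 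To repair your argument you would need to replace the single-scale Chebyshev step by this increment/chaining scheme; the variance bound you derived is the right input for it, but on its own it cannot produce the stated exponent.
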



\subsection{Khintchine--Groshev Theorems}\label{sec:KG} Consider a collection $\psi=(\psi_p)_{p\in S}$ of non-increasing and non-negative functions on $\R_{>0}$ such that
\[
\psi_p\equiv 1
\;\text{on}\; (0,1]\text{ for all } p\in S.
\]
We also add a mild assumption for each finite place $p\in S_f$ that for each $k\in \Z$, there is some $\ell$ so that {$\psi_p(p^{k'}) = (p^m)^\ell$} for each $k' = kn, kn+1,\ldots, kn+(n-1)$.

We say that $\SA \in \Mat_{m,n}(\Q_S)$, an $m\times n$ matrix with entries in $\Q_S$, is \emph{$\psi$-approximable} if the system of inequalities 
\[
\| \SA  \bq + \bp\|^m_p \le \psi_p(\|\bq\|^n_p) \text{ for all } p\in S
\]
has infinitely many integer solutions $(\bp,\bq)\in \Z_S^m\times \Z_S^n$.
By \cite{Han22} the set of $\psi$-approximable matrices has measure zero in $\Mat_{m,n}(\Q_S)$ if $\int_{\Q^n_S} \prod_{p\in S}\psi_p (\|\by\|^n_p) d\by<\infty$ and the case when the integral diverges, one can obtain the quantitative Khintchine--Groshev theorem for almost all $\SA$.

In this article we state two theorems which give quantitative primitive Khintchine--Groshev theorems. The first statement is for $m+n\geq 3$, and the second addresses the case when $m+n =2$. In order to state the theorems, for $\SA\in \mathrm{Mat}_{m,n}(\Q_S)$, define the counting function
\[
\widehat{N}_{\psi, \SA}(\T)=\#\left\{(\bp,\bq)\in P(\Z_S^{m+n}): \begin{array}{c}
\|\SA\bq+\bp\|_p^m \le \psi_p(\|\bq\|_p^n);\\
\|\bq\|_p^n \le T_p, \end{array}\; \forall p\in S \right\},\]
{where $\T=(T_p)_{p\in S}\in (\R_{>0})^{s+1}$,}
and define the volume normalization
\[
V_{\psi}(\T)= {2^{m}}\int_{\{\by\in \Q_S^n: \|\by\|_p\le T_p,\;\forall p\in S\}} \prod_{p\in S} \psi_p (\|\by\|_p^n)d\by.
\]

When taking the limit for \emph{times} $\T$, we can either restrict our sequence of $\T$ to a subsequence, or allow for any sequence of $\T$ with the following additional assumption.
We say that $\psi$ has the \textit{bounded extremal times property} if there are $\delta_1,\;\delta_2>0$ with $\delta_1+1<\delta_2<\delta_1+3$ and $C=C(\psi, \delta_1, \delta_2)>0$ such that
\begin{equation}\label{eq:KGextra}
\#\left\{\T\in (\R_{\ge 1}\cup\{\infty\})\times \prod_{p\in S_f} \{p^z: z\in \N\cup\{0,\infty\}\}\}:
\begin{array}{c}
V_{\psi}(\T)\in [k^{\delta_2}, (k+1)^{\delta_2}],\;\text{and}\\
\T\;\text{is}\; (k,\delta_2)\text{-extremal}
\end{array}\right\}
<Ck^{\delta_1}
\end{equation}
for any $k\in \N$, where $\T$ is \emph{$(k,\delta_2)$-extremal} if
\[
\not\exists \T'
\;\text{s.t.}\;
V_\psi(\T')\in [k^{\delta_2}, (k+1)^{\delta_2}]
\;\text{and}\;
\begin{array}{c}
\T' \succ \T;\\
\T' \prec \T 
\end{array},\;\text{respectively}.
\]

\begin{thm}\label{primitive Khintchine-Groshev Thm} 
Let $m, n\ge 1$ be a pair of integers with $m+n\ge 3$. Let $\psi=(\psi_p)_{p\in S}$ be a collection of approximating functions described in the beginning of \cref{sec:KG}, for which 
$$\int_{\Q_S^n} \prod_{p\in S} \psi_p (\|\by\|_p^n)d\by=\infty.$$
If $\psi$ has the bounded extremal times property, then 
for almost all $\SA\in \mathrm{Mat}_{m,n}(\Q_S)$, it follows that
\begin{equation}\label{eq:KGconclusion}
\lim_{\stackrel{T_p\rightarrow \infty}{
\forall p\in S}}\frac {\widehat{N}_{\psi,\SA}(\T)}{V_{\psi}(\T)/\zeta_S(m+n)}=1.
\end{equation}

Removing the bounded extremal times property, for any subsequence $(\T_\ell)_{\ell\in \N}$ increasing with $\T_{\ell_1}\preceq \T_{\ell_2}$ for $\ell_1\le \ell_2$ and such that $\lim_{\ell\to\infty}V_{\psi}(\T_\ell)=\infty$ implies the same conclusion \cref{eq:KGconclusion} with the limit replaced by $\ell\to \infty$.
\end{thm}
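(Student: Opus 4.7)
The plan is to apply the $S$-arithmetic primitive mean value formula (\cref{prop:primmean}) and the primitive second moment formula (\cref{thm:primrog}) to the indicator of a $\psi$-adapted product set, then close the loop with a Chebyshev--Borel--Cantelli argument, following the general strategy of \cite{AGY21, Han22} but now in the primitive setting. For $\T=(T_p)_{p\in S}$, set
\[
B_\T \defeq \prod_{p\in S} \left\{(\by_1,\by_2)\in \Q_p^m\times \Q_p^n : \|\by_1\|_p^m \leq \psi_p(\|\by_2\|_p^n),\ \|\by_2\|_p^n \leq T_p\right\}
\]
and let $f_\T \defeq \one_{B_\T}$. For $\SA \in \Mat_{m,n}(\Q_S)$, let $u(\SA) = \bpm I_m & \SA\\ 0 & I_n \epm \in \SG_{m+n}$. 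A direct unpacking of definitions gives $\widehat{f_\T}(u(\SA)\Gamma_{m+n}) = \widehat{N}_{\psi,\SA}(\T)$ and $\vol_S(B_\T) = V_\psi(\T)$ (the $2^m$ normalization in $V_\psi$ matches the Archimedean contribution).

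Next I would compute both moments on $\SG_{m+n}/\Gamma_{m+n}$ and transfer them to $\Mat_{m,n}(\Q_S)$. \cref{prop:primmean} immediately yields $\int \widehat{f_\T}\, d\mu_{m+n} = V_\psi(\T)/\zeta_S(m+n)$, and \cref{thm:primrog} applied to $F_\T(\bsx,\bsy):=f_\T(\bsx)f_\T(\bsy)$ gives the identity
\[
\int \widehat{f_\T}^{\,2}\, d\mu_{m+n} - \frac{V_\psi(\T)^2}{\zeta_S(m+n)^2} = \frac{1}{\zeta_S(m+n)} \sum_{k\in \Z_S^\times} \vol_S\!\bigl(B_\T \cap k^{-1}B_\T\bigr).
\]
The terms $k=\pm 1$ contribute $V_\psi(\T)$; for the remaining $k\in \Z_S^\times = \{\pm p_1^{a_1}\cdots p_s^{a_s}\}$, the product formula $\prod_p |k|_p = 1$ together with the non-increasing $\psi_p$ and the product shape of $B_\T$ shrinks $\vol_S(B_\T \cap k^{-1}B_\T)$ by controlled powers of $|k|_p$, so a geometric summation bounds the total variance by $O(V_\psi(\T))$. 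To transfer these bounds from $\SG_{m+n}/\Gamma_{m+n}$ to $\Mat_{m,n}(\Q_S)$, I would use that $B_\T$ is stabilized by the block-diagonal subgroup preserving the $(m,n)$-splitting; this invariance lets $\widehat{f_\T}$ descend to a function of $u(\SA)$ alone, and a standard Iwasawa unfolding (cf.\ \cite[\S4]{Han22}) identifies the $\SG_{m+n}/\Gamma_{m+n}$-moments with the $\SA$-moments on a fundamental domain of $\Mat_{m,n}(\Z_S)\backslash \Mat_{m,n}(\Q_S)$.

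With mean $V_\psi(\T)/\zeta_S(m+n)$ and variance $\ll V_\psi(\T)$ in hand, I would execute Chebyshev--Borel--Cantelli. For the subsequential statement, since $V_\psi(\T_\ell)\to \infty$ monotonically, one passes to a sub-subsequence along which $V_\psi$ grows fast enough that Chebyshev tails are summable, and then fills in intermediate $\ell$ via monotonicity of both $\widehat{N}_{\psi,\SA}(\cdot)$ and $V_\psi(\cdot)$ after tightening the sub-subsequence so that consecutive ratios $V_\psi(\T_{\ell_{k+1}})/V_\psi(\T_{\ell_k})$ tend to $1$. For the first statement I would invoke the bounded extremal times property: bucket $\T$-space into shells $V_\psi(\T) \in [k^{\delta_2}, (k+1)^{\delta_2}]$; by hypothesis each shell contains $\ll k^{\delta_1}$ extremal $\T$, and by two-sided monotonicity of $\widehat{N}_{\psi,\SA}(\cdot)$ every $\T$ in the shell is sandwiched between extremal elements whose $V_\psi$-values agree with that of $\T$ up to a factor tending to $1$. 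A union bound through Chebyshev over the extremal $\T$'s in each shell yields a per-shell deviation probability $\ll k^{\delta_1-\delta_2}$, summable precisely because $\delta_2 > \delta_1 + 1$, so Borel--Cantelli closes the argument.

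\textbf{Main obstacle.} The two delicate points I anticipate are (i) the transfer from $\SG_{m+n}/\Gamma_{m+n}$-averages to $\Mat_{m,n}(\Q_S)$-averages, which requires carefully organizing the Iwasawa-type decomposition at each Archimedean and $p$-adic place and verifying that the left-invariance of $\widehat{f_\T}$ is compatible with the coset structure of $\Gamma_{m+n}$ in the mixed setting; and (ii) controlling the diagonal sum $\sum_{k\in \Z_S^\times}$ uniformly in $\T$, since $\Z_S^\times$ is infinite. The latter forces a careful use of both $\psi_p \leq 1$ and the product geometry of $B_\T$, leveraging the product formula $\prod_p|k|_p = 1$ to balance contractions at the Archimedean place against dilations at the finite places, ultimately showing that only $k=\pm 1$ contribute to the main order while the tail sum is summable.
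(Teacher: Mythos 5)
Your outline of the first two moments is correct, and the Chebyshev--Borel--Cantelli/shell-bucketing machinery you describe for the bounded-extremal-times case is indeed what is used (delegated to \cite[\S 4]{Han22} in the paper). The problem is the \emph{transfer step}: your claim that ``$B_\T$ is stabilized by the block-diagonal subgroup preserving the $(m,n)$-splitting'' is false, and the invariance-descent argument you build on it cannot be made to work. The set $B_\T$ imposes the condition $\|\by_1\|_p^m \le \psi_p(\|\by_2\|_p^n)$ for a fixed, non-scale-invariant $\psi_p$, so a block-diagonal element $\diag(g_1,g_2)$ that dilates $\by_1$ and contracts $\by_2$ (or vice versa) does not preserve $B_\T$. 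Consequently $\widehat{f_\T}$ does \emph{not} descend to a function of the unipotent coset $u(\SA)\Gamma_{m+n}$, and you cannot perform the Iwasawa-type unfolding you describe. This matters: the slice $\{u(\SA)\Gamma_{m+n}: \SA\in\Mat_{m,n}(\Q_S)\}$ has measure zero in $\SG_{m+n}/\Gamma_{m+n}$, so a full-measure deviation bound on the homogeneous space says nothing a priori about a.e.\ $\SA$.

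The actual mechanism, used in \cite[\S 4]{Han22} (to which the paper's proof defers, substituting \cref{Schmidt Main Theorem} (1) for the non-primitive counting input) and spelled out explicitly in the paper's own proof of \cref{1-d primitive Khintchine-Groshev Thm}, is the dilation/sandwiching trick. One introduces perturbed collections $\psi_\ell^{\pm}$ and times $\T_\ell^{\pm}$ so that for any $\sh$ in a small neighborhood $C_S(\eps_\ell)$ of the identity in the block-diagonal-plus-lower-unipotent part, $E_{\psi_\ell^-}(\T_\ell^-)\subseteq \sh E_\psi(\T_\ell)\subseteq E_{\psi_\ell^+}(\T_\ell^+)$. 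The a.e.-$\sg$ counting asymptotics from \cref{Schmidt Main Theorem} (1) applied to the perturbed sets then pin down the count for $\sh u(\SA)\Gamma_{m+n}$ simultaneously for $\sh$ ranging over a positive-measure set, and Fubini converts the a.e.-$\sg$ statement into the a.e.-$\SA$ statement. Your proposal is missing this entire step, and it is where the real work (beyond the second-moment estimate you correctly identified) actually lies.
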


The proof of \cref{primitive Khintchine-Groshev Thm} is a direct generalization of the non-primitive results. Our main contribution is in the case when $m=n=1$, where we obtain the following theorem with the same conclusion of \cref{primitive Khintchine-Groshev Thm} with different assumptions needed for subsequences of times $\T_\ell$.
\begin{thm}\label{1-d primitive Khintchine-Groshev Thm}
Let $\psi=(\psi_p)_{p\in S}$ be a collection of approximating functions described in the beginning of \cref{sec:KG}, where $\int_{\Q_p} \psi_p(y_p)dy_p=\infty$ for all $p\in S$. Suppose we have a sequence $(\T_\ell)_{\ell\in \N}$ such that there exist $\delta_1, \delta_2 \in (0,1)$ so that
 \begin{equation} \label{borel-cantelli condition}
 \begin{gathered}
\sum_{\ell=1}^\infty \vol_p\left(E_{\psi_p}(T^{(\ell)}_p)\right)^{1-2\delta_1} < \infty,\; \forall p\in S;\\
\sum_{\ell=1}^\infty \vol_S\left(E_\psi(\T_\ell)\right)^{1+\delta'-2\delta_2}<\infty\;\text{for some}\; \delta'>0,
\end{gathered}
\end{equation}
where
\[\begin{gathered}
E_{\psi_p}(T_p)=\left\{(x_p, y_p)\in \Q_p\times \Q_p : 
|x_p|_p \le \psi_p(|y_p|_p)\;\text{and}\; |y_p|_p\le T_p\;\right\};\\
E_\psi(\T)=\prod_{p\in S} E_{\psi_p} (T_p).
\end{gathered}\]
 Then for almost all $\sx\in \Q_S$,
\[
\lim_{\ell\rightarrow \infty} 
\frac {\widehat{N}_{\psi,{\sx}}(\T_\ell)}{V_\psi(\T_\ell)/\zeta_S(2)}=1.
\]
\end{thm}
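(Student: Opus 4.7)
The plan is to identify $\widehat{N}_{\psi, \sx}(\T) = \widehat{f_\T}(u_\sx \Gamma_2)$, where $u_\sx = \bsm 1 & \sx \\ 0 & 1 \esm$ is the unipotent upper-triangular matrix in $\SG_2$ and $f_\T = \mathbf{1}_{E_\psi(\T)}$. Since $V_\psi(\T) = \vol_S(E_\psi(\T))$, \cref{prop:primmean} identifies the target mean value as $V_\psi(\T)/\zeta_S(2)$. The strategy is the standard variance method: compute the expected value of $\widehat{N}_{\psi, \sx}(\T_\ell)$ as $\sx$ ranges over bounded subsets of $\Q_S$, bound its variance using the primitive second moment formula for $d=2$, and conclude via a Chebyshev plus Borel--Cantelli argument along the sequence $(\T_\ell)_{\ell \in \N}$.

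First, I would show that the average of $\widehat{f_\T}(u_\sx \Gamma_2)$ over $\sx$ in a large bounded region $B \subseteq \Q_S$ equals $\vol_S(B) \cdot V_\psi(\T)/\zeta_S(2)$ up to boundary terms. This is a direct Fubini computation: for each $\bv = \tp{(a, c)} \in P(\Z_S^2)$ with $c \neq 0$, the set $\{\sx : u_\sx \bv \in E_\psi(\T)\}$ is a product over places $p \in S$ of balls of radius $\psi_p(|c|_p)/|c|_p$ centered at $-a/c$, intersected with $B_p$; summing over primitive pairs with $|c|_p \le T_p$ and using the density $\zeta_S(2)^{-1}$ of primitive vectors in $\Z_S^2$ (Proposition~\ref{relation btwn primitive and S-primitive}) recovers the target mean.

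The heart of the argument is the variance bound. Applying \cref{integral over the cone}(2) to $F(\bsx, \bsy) = f_\T(\bsx) f_\T(\bsy)$ decomposes the $L^2$-norm on the cone $C_S$ into a \emph{generic} term
\[
\frac{1}{\zeta_S(2)} \int_{(\Q_S^2)^2} \Phi_S(\det(\bsx, \bsy)) f_\T(\bsx) f_\T(\bsy) \, d\bsx \, d\bsy
\]
and a \emph{collision} term
\[
\frac{1}{2 L_S \zeta_S(2)} \sum_{k \in \Z_S^\times} \int_{\Q_S^2} f_\T(\bsx) f_\T(k\bsx) \, d\bsx.
\]
By the forthcoming estimate \cref{application of Abel summation formula}, $\Phi_S = 1/(L_S \zeta_S(2)) + (\text{error})$ with a controlled divisor-sum error, so the generic term is $V_\psi(\T)^2 / (L_S \zeta_S(2)^3)$ plus an error governed by $\delta_1$, while the collision term is $O(V_\psi(\T))$ and governs $\delta_2$. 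Transferring this $L^2(C_S)$ bound to a variance bound for the function $\sx \mapsto \widehat{f_\T}(u_\sx \Gamma_2)$ on $\Q_S$ yields
\[
\mathrm{Var}_\sx\!\left(\widehat{N}_{\psi, \sx}(\T_\ell)\right) = O\!\left(\sum_{p \in S} \vol_p\!\left(E_{\psi_p}(T^{(\ell)}_p)\right)^{2\delta_1} \prod_{p' \neq p} \vol_{p'}\!\left(E_{\psi_{p'}}(T^{(\ell)}_{p'})\right)^{2} + \vol_S(E_\psi(\T_\ell))^{2\delta_2}\right).
\]
Dividing by $V_\psi(\T_\ell)^2$, Chebyshev's inequality combined with the summability hypotheses in \cref{borel-cantelli condition} gives $\sum_\ell \PP_\sx\{\,|\widehat{N}_{\psi, \sx}(\T_\ell) \zeta_S(2)/V_\psi(\T_\ell) - 1| > \eps\,\} < \infty$ for every $\eps > 0$; Borel--Cantelli then delivers the almost-sure convergence claimed.

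The main obstacle is the transfer from the cone-integral variance to a variance along the one-parameter unipotent subgroup $\{u_\sx \Gamma_2 : \sx \in \Q_S\}$. Since this orbit has measure zero in $C_S$, one cannot simply restrict \cref{integral over the cone}(2) by Fubini. Two routes appear natural: (i) expand $\int_B \widehat{f_\T}(u_\sx \Gamma_2)^2 \, d\sx$ directly into a sum over ordered pairs of primitive vectors and use the $\Gamma_2$-orbit decomposition of $P(\Z_S^2)^2$ supplied by \cref{rank 2 1st form} (generic orbits parametrized by $\det = n$, and the $\Z_S^\times$-collinear orbits) to re-sum the diagonal and generic contributions; or (ii) thicken the unipotent orbit in $C_S$ by a small compact transversal along the diagonal and compact directions, apply the cone formula on the thickening, and show by a change-of-variables argument that the thickened variance dominates the pointwise $\sx$-variance with constants independent of $\T$. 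In both routes, keeping the two error scales $\delta_1$ (from the $\Phi_S$ divisor-sum correction of \cref{application of Abel summation formula}) and $\delta_2$ (from the collision term) separate is essential to match \cref{borel-cantelli condition}, and this two-exponent dichotomy is exactly what distinguishes the present $d=2$ case from \cref{primitive Khintchine-Groshev Thm}.
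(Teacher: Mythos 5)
There is a genuine gap, and it sits exactly at the point you yourself flag as ``the main obstacle.'' Your Chebyshev--Borel--Cantelli skeleton and your identification of where $\delta_1$ and $\delta_2$ come from (the $\Phi_S$ correction of \cref{application of Abel summation formula} versus the $\Z_S^\times$-collision term in \cref{integral over the cone}(2)) are correct, and this is indeed how the paper obtains its generic-lattice counting statement, \cref{Schmidt Main Theorem}(2), whose hypotheses are precisely \cref{borel-cantelli condition}. But that statement is for almost every lattice in the cone, and the theorem at hand is about the measure-zero family $\{\su_\sx\Gamma_2 : \sx\in\Q_S\}$. Your route (ii) proposes to bridge this by showing that the variance over a thickening of the unipotent orbit ``dominates the pointwise $\sx$-variance''; this principle is false as stated. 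An $L^2$ bound over a positive-measure neighborhood gives information only for almost every lattice in that neighborhood and cannot be specialized to a null slice, because the Siegel-type counting function $\sg\mapsto\widehat{\one}_{E_\psi(\T)}(\sg\Gamma_2)$ is not continuous, so its values on the slice are not controlled by nearby averages without additional structure. Route (i) (expanding $\int_B \widehat{f_\T}(\su_\sx\Gamma_2)^2\,d\sx$ over pairs of primitive vectors) could in principle be pushed through, but it is a substantially different computation --- an orbit-level pair correlation with arithmetic sums over primitive $S$-integer pairs --- which you do not carry out, and the determinant-orbit decomposition of \cref{rank 2 1st form} does not unfold against the unipotent average in any direct way.

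The missing idea, which is how the paper (following the approximation technique of \cite{AGY21} and \cite{Han22}) closes this gap, is a \emph{monotone sandwich} of target sets under small perturbations of the lattice. One introduces dilated/contracted data $\psi^{\pm}_\ell$, $\T^{\pm}_\ell$ with $\vol_S(E_{\psi^\pm_\ell}(\T^\pm_\ell))=(1+\eps_\ell)^{\pm2}\vol_S(E_\psi(\T_\ell))$, checks that these still satisfy \cref{borel-cantelli condition}, applies \cref{Schmidt Main Theorem}(2) to get the asymptotic for almost every $\sg\in\Interval_1\times\SL_2(\Q_S)$, writes a generic element as $\sh(\sv,\sa,\sb)\su_\sx$, and defines a neighborhood $C_S(\eps_\ell)$ of such $\sh$ near the identity for which $E_{\psi^-_\ell}(\T^-_\ell)\subseteq \sh\, E_\psi(\T_\ell)\subseteq E_{\psi^+_\ell}(\T^+_\ell)$. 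By Fubini one extracts, for each $\ell$, a single $\sh_\ell\in C_S(\eps_\ell)$ such that the counting asymptotic holds at $\sh_\ell\su_\sx$ for almost every $\sx$; the set inclusions then sandwich $\widehat{N}_{\psi,\sx}(\T_\ell)=\#\big(\su_\sx P(\Z_S^2)\cap E_\psi(\T_\ell)\big)=\#\big(\sh_\ell\su_\sx P(\Z_S^2)\cap \sh_\ell E_\psi(\T_\ell)\big)$ between the counts for $E_{\psi^\mp_\ell}(\T^\mp_\ell)$, and letting $\eps_\ell\to0$ gives the limit. Without this sandwiching step (or a fully executed direct orbit second-moment computation in the spirit of your route (i)), the proposal does not yield the theorem.
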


When $S=\{\infty\}$, the conditions in \cref{borel-cantelli condition} are superfluous and we obtain a much simpler asymptotic result, which is a direct consequence of Schmidt's original theorem in \cite{Schmidt60a}.

\begin{coro}\label{1-d primitive Khintchine-Groshev Thm real} Let $\psi:\R_{>0}\rightarrow \R_{\ge0}$ be a non-increasing function for which {$\sum_{q=1}^{\infty} \psi(q)$} diverges. For $x\in \R$, define
\[\begin{gathered}
N_{\psi,x}(T)=\#\left\{\frac p q \in \Q : \left| x - \frac p q \right|< \frac {\psi(q)} q \;\text{and}\; 1\le q < T\right\}.
\end{gathered}\]
Then for almost all $x\in \R$, 
\[
\lim_{T\rightarrow \infty} \frac {N_{\psi, x}(T)} {2\sum_{1\le q \le T} \psi (q)/\zeta(2)}=1.
\]
\end{coro}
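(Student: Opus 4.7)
The plan is to reduce the corollary either directly to the classical theorem of Schmidt \cite{Schmidt60a} or, equivalently, to \cref{1-d primitive Khintchine-Groshev Thm} specialized to $S=\{\infty\}$, $m=n=1$. The key observation is that counting reduced fractions $p/q$ with $q\ge 1$ is essentially the same as counting primitive integer pairs, up to a factor of $2$: each reduced fraction $p/q$ lifts to exactly the two primitive pairs $(-p,q)$ and $(p,-q)$ in $P(\Z^2)$, and a short check using $|qx-p|=q|x-p/q|$ shows that both satisfy the defining inequality of $\widehat{N}_{\psi,x}(T)$ iff $p/q$ is counted by $N_{\psi,x}(T)$. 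Since the edge cases $q=0$ (only $(\pm 1,0)\in P(\Z^2)$) and the finitely many pairs that realize equality in the defining inequality together contribute at most $O(1)$ for almost every $x$, one obtains $\widehat{N}_{\psi,x}(T)=2N_{\psi,x}(T)+O(1)$.

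Next I would compare the volume normalization with the sum in the denominator. Specializing the definition gives
\[
V_\psi(T)=2\int_{|y|\le T}\psi(|y|)\,dy=4\int_0^T\psi(y)\,dy,
\]
and the integral test together with the monotonicity of $\psi$ yields $\int_0^T\psi(y)\,dy=\sum_{1\le q\le T}\psi(q)+O(\psi(1))$; since $\sum\psi(q)$ diverges this error is negligible. Hence $V_\psi(T)/\zeta(2)\sim 4\sum_{q\le T}\psi(q)/\zeta(2)$, which is twice the denominator of the corollary. Combined with $\widehat{N}_{\psi,x}=2N_{\psi,x}+O(1)$, the conclusion of \cref{1-d primitive Khintchine-Groshev Thm} is precisely the conclusion of the corollary.

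The only nontrivial point is applying \cref{1-d primitive Khintchine-Groshev Thm} with the full-$T$ limit rather than only along a sequence, which I would handle as follows. Choose $T_\ell$ with $V_\psi(T_\ell)\asymp \ell^2$; then for any $\delta_1,\delta_2\in (3/4,1)$ and small $\delta'>0$, the two sums in \cref{borel-cantelli condition} become $\sum_\ell \ell^{2(1-2\delta_1)}$ and $\sum_\ell \ell^{2(1+\delta'-2\delta_2)}$, both convergent. Applying \cref{1-d primitive Khintchine-Groshev Thm} yields convergence along $T_\ell$ for almost every $x$, and since the same choice ensures $V_\psi(T_{\ell+1})/V_\psi(T_\ell)\to 1$, the usual monotonicity sandwich (using that $T\mapsto N_{\psi,x}(T)$ and $T\mapsto\sum_{q\le T}\psi(q)$ are both non-decreasing) upgrades this to the full $T\to\infty$ limit. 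The main obstacle is striking this balance in the choice of $T_\ell$: sparse enough for the Borel--Cantelli sums to converge, yet dense enough for the monotonicity interpolation to be tight.
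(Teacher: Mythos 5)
Your argument is correct but takes a genuinely different route from the paper. The paper's proof of this corollary does not apply \cref{1-d primitive Khintchine-Groshev Thm} as a black box: instead it re-runs the argument of that theorem with $S=\{\infty\}$, replacing the $S$-arithmetic counting input (\cref{Schmidt Main Theorem}) by Schmidt's original real counting theorem for primitive lattice points ([Theorem 2]{Schmidt60}). Since that classical counting theorem gives an error term valid for \emph{all} $T$, the subsequence conditions in \cref{borel-cantelli condition} never arise and no interpolation is needed; the paper explicitly describes these conditions as ``superfluous'' when $S=\{\infty\}$. Your approach, by contrast, stays entirely inside the paper's $S$-arithmetic machinery: you verify the Borel--Cantelli hypotheses along a subsequence $T_\ell$ with $V_\psi(T_\ell)\asymp\ell^2$, apply \cref{1-d primitive Khintchine-Groshev Thm}, and then upgrade to the full $T\to\infty$ limit by a monotonicity sandwich using $V_\psi(T_{\ell+1})/V_\psi(T_\ell)\to 1$. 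Both work; the paper's route is shorter because it imports a sharper external result, while yours is self-contained modulo the paper's own theorems but needs the extra interpolation step. Your two-to-one correspondence $\widehat{N}_{\psi,x}(T)=2N_{\psi,x}(T)+O(1)$ and the comparison $V_\psi(T)\sim 4\sum_{q\le T}\psi(q)$ are both stated (more tersely) in the paper as well.

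One small gap you should fill: \cref{1-d primitive Khintchine-Groshev Thm} assumes the normalization $\psi\equiv 1$ on $(0,1]$, which the corollary's $\psi$ need not satisfy, and in fact if $\psi$ is unbounded near $0$ then $V_\psi(T)=4\int_0^T\psi$ could be infinite. This is harmless because $N_{\psi,x}(T)$ and $\sum_{q\le T}\psi(q)$ only see $\psi$ at the integers $q\ge 1$, so you may first replace $\psi$ by, say, the non-increasing step function $\tilde\psi$ with $\tilde\psi(y)=\psi(\lceil y\rceil)$ for $y>1$ and $\tilde\psi\equiv\psi(1)$ on $(0,1]$; this changes neither the numerator nor the denominator, makes $\tilde\psi$ bounded, and makes the comparison $\int_0^T\tilde\psi=\sum_{q\le T}\psi(q)+O(\psi(1))$ exact. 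You also implicitly assume that $T\mapsto V_\psi(T)$ is continuous and unbounded (so $T_\ell$ with $V_\psi(T_\ell)=\ell^2$ exist), which holds after this normalization since $\sum\psi(q)$ diverges. With these adjustments stated, your proposal is complete and correct.
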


Applying the same argument in the proof of \cref{1-d primitive Khintchine-Groshev Thm real} with different domains, for instance,
\[
(x,y)\in \R^2: \left\{\begin{array}{c}
0< x \le \psi(y)\;\;\text{or} \\[0.05in]
- \psi(y) \le x  < 0,\;\text{resp.}
\end{array}\right.
\quad\text{and}\quad
1\le y < T,
\]
one can obtain that for almost all $x\in \R$, the number of rationals $p/q\in N_{\psi,x}(T)$ for which $x-p/q>0$ and $x-p/q<0$ respectively, are asymptotically equal, which is $\sum_{1\le q\le T} \psi(q)/\zeta(2)$.

\subsection{Logarithm laws}\label{sec:LL}

The logarithm law for a unipotent flow given in \cref{thm:loglawAM} can be verified by giving an upper bound and then a lower bound, similar to \cite{AM09}.

\begin{lem}\label{upper bound of AM}
For $d\ge 2$, it follows that for $\mu_d$-almost every $\Lambda$, where $\Lambda=\sg\Z_S^d$ or $\Lambda=\sg P(\Z_S^d)$ for $\sg\in \SG_d$,
\[
\limsup_{|\sx|\rightarrow\infty}
\frac{\log(\alpha_1(\su_\sx \Lambda))}{\log\left(\prod_{p\in S} |x_p|_p\right)} \le \frac{1}{d}
\]
\end{lem}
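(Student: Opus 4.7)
The plan is a Borel--Cantelli argument on a discretization of the time parameter $\sx\in\Q_S$, built on a Siegel-type tail bound for lattices with a short vector, and lifted to the continuous $\limsup$ via polynomial control of $\su_\sx$ on "unit cubes". The central estimate is
\begin{equation*}
\mu_d\bigl(\{\Lambda : \alpha_1(\Lambda) > R\}\bigr) \le C R^{-d}, \qquad R \ge 1,
\end{equation*}
for some $C=C(d,S)$. If $\alpha_1(\Lambda)>R$, there is $\bsv\in\Lambda\setminus\{0\}$ with $\prod_p\|v_p\|_p < R^{-1}$. Writing $\|v_{p_i}\|_{p_i}=p_i^{m_i}$, the unit $c=p_1^{m_1}\cdots p_s^{m_s}\in\Z_S^\times$ has $|c|_{p_i}=p_i^{-m_i}$ and $|c|_\infty=\prod p_i^{m_i}$, so $c\bsv\in\Lambda$ satisfies $\|(c\bsv)_p\|_p=1$ for all $p\in S_f$ and $\|(c\bsv)_\infty\|_\infty=\prod_p\|v_p\|_p<R^{-1}$. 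Hence $\Lambda$ meets
\begin{equation*}
E_R := \bigl\{\bsv\in\Q_S^d : \|v_\infty\|_\infty\le R^{-1},\ \|v_p\|_p\le 1\ \forall p\in S_f\bigr\}
\end{equation*}
nontrivially, and $\vol_S(E_R)=O(R^{-d})$. Markov's inequality applied to the Siegel mean value formula \cref{eq:meanvalue} with $f=\mathbf{1}_{E_R}$ yields the claim; the same bound holds for $\Lambda=\sg P(\Z_S^d)$ because $\alpha_1$ agrees on $\sg\Z_S^d$ and $\sg P(\Z_S^d)$ (scaling a nonzero lattice vector by a $\Z_S$-multiple to a primitive representative only shrinks $\prod_p\|v_p\|_p$).

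Next I would partition $\Q_S$ into unit cubes $C_\alpha=[k_\infty,k_\infty+1)\times\prod_{p\in S_f}(a_p+\Z_p)$ indexed by $\alpha\in\Z\times\prod_{p\in S_f}\Q_p/\Z_p$, pick a representative $\sx_\alpha\in C_\alpha$, and fix $\epsilon>0$. By the $\SG_d$-invariance of $\mu_d$ and the tail bound,
\begin{equation*}
\mu_d\bigl(\{\Lambda : \alpha_1(\su_{\sx_\alpha}\Lambda)>\rd(\sx_\alpha)^{1/d+\epsilon}\}\bigr)\le C\,\rd(\sx_\alpha)^{-1-d\epsilon}.
\end{equation*}
Summing over $\alpha$ with $|k_\infty|\ge 1$ and nontrivial cosets $a_p\ne\Z_p$, the total factors by place: $\sum_{k\ne 0}|k|^{-1-d\epsilon}=2\zeta(1+d\epsilon)<\infty$ at the Archimedean place, and for each $p\in S_f$, $(1-p^{-1})\sum_{n\ge 1}p^{-nd\epsilon}<\infty$ from the $(p-1)p^{n-1}$ cosets with $|a_p|_p=p^n$. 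Borel--Cantelli then gives, for $\mu_d$-a.e.\ $\Lambda$, only finitely many $\alpha$ with $|k_\infty|$ and every $|a_p|_p$ above some threshold violate $\alpha_1(\su_{\sx_\alpha}\Lambda)\le\rd(\sx_\alpha)^{1/d+\epsilon}$.

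To pass to the continuous $\limsup$, I appeal to the classification of unipotent one-parameter subgroups of $\SL_d(\Q_S)$ via the exponential of nilpotent matrices (to be established in the paper's log-law section), which presents $\su_\sx$ as a polynomial of degree $\le d-1$ in $\sx$. This yields $\|\su_t\bsw\|_p\le P_p(|t|_p)\|\bsw\|_p$ for polynomials $P_p$ at each place. Factoring $\su_\sx=\su_{\sx-\sx_\alpha}\su_{\sx_\alpha}$ and using $|x_p-x_{\alpha,p}|_p\le 1$ on $C_\alpha$ produces the uniform comparison
\begin{equation*}
\alpha_1(\su_\sx\Lambda)\le K\cdot\alpha_1(\su_{\sx_\alpha}\Lambda),\qquad K:=\prod_{p\in S}P_p(1).
\end{equation*}
Combined with the Borel--Cantelli conclusion, for $\mu_d$-a.e.\ $\Lambda$ and all $\sx$ with every $|x_p|_p$ sufficiently large, $\alpha_1(\su_\sx\Lambda)\le K\,\rd(\sx_\alpha)^{1/d+\epsilon}$; since $\rd(\sx)$ and $\rd(\sx_\alpha)$ are comparable on $C_\alpha$, taking $\log$ and dividing by $\log\rd(\sx)$ gives $\le 1/d+\epsilon+o(1)$. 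Intersecting the full-measure sets for $\epsilon=1/n$, $n\in\N$, concludes the proof.

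The main obstacle is the tail bound: the natural set $\{\prod_p\|v_p\|_p<R^{-1}\}\subset\Q_S^d$ has infinite $\vol_S$-measure, so Siegel cannot be applied to it directly. The key insight is that the $\Z_S^\times$-scaling freedom can be used to concentrate all the "length" of a short vector into the Archimedean place, converting the $S$-arithmetic short-vector condition into a genuine ball in the infinity coordinate (times $\Z_p^d$ at the finite places), on which the non-primitive Siegel mean value formula immediately yields the desired $R^{-d}$ decay.
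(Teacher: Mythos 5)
Your proof is correct and runs on the same engine as the paper's --- a Siegel mean value bound for the measure of lattices containing a very short vector, followed by Borel--Cantelli over a countable discretization of the time parameter --- but it differs in two details worth recording. For the tail estimate $\mu_d(\{\alpha_1>R\})\ll_{d,S} R^{-d}$, you use the $\Z_S^\times$-scaling to push all the length of a short vector into the Archimedean place, reducing the event to nontrivial intersection with the compact target $B(0,R^{-1})\times\prod_{p\in S_f}\Z_p^d$; the paper instead spreads the radius evenly across the $s+1$ places (each place receives $\approx D\,|m|_\infty^{-(1/(s+1))(1/d+\eps_r)}$). Both yield the same decay, but your rebalancing makes the finiteness of the target's volume immediate and packages the estimate as a reusable standalone bound. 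For the passage from the discretization back to the continuous $\limsup$, you tile $\Q_S$ by unit cubes and control $\alpha_1(\su_\sx\Lambda)$ on each cube through the polynomial dependence of $\su_\sx$ on $\sx$ (the Jordan-form presentation, which the paper develops anyway for the lower bound). The paper instead restricts at the outset to $\sx = m/(p_1^{k_1}\cdots p_s^{k_s})\in\Z_S$ with all $|x_p|_p\ge 1$, citing upper semicontinuity of $\sx\mapsto\log\alpha_1(\su_\sx\Lambda)$ as the reason this discrete set suffices; that appeal is terse and is really concealing a comparison argument of exactly the type you spell out, so your explicit version is the more carefully justified route, and both lead to the same convergent Borel--Cantelli sum with the same per-place $\prod_p(\text{count of cosets of norm }p^n)$ factor. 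One small caveat in your cube step: the Jordan-block entries $t^k/k!$ can satisfy $|t^k/k!|_p>1$ even when $|t|_p\le 1$ (since $k!$ may be a non-unit in $\Z_p$), so $\su_{\sx-\sx_\alpha}$ need not lie in $\SL_d(\Z_p)$; this is harmless --- it only enlarges the constant $P_p(1)$ and hence $K$, which your argument already absorbs.
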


\begin{lem}\label{lower bound of AM}
	Fix $d\geq 2$. Let $\{\su_\sx: \sx\in \Q_S\}$ be the one-$\Q_S$-parameter subgroup of $\SG_d$. For $\mu_d$-almost every $\Lambda$, where {$\Lambda = \sg\Z_S^d$ (for $d\geq 3$) or $\Lambda = \sg P(\Z_S^d)$ (for $d\geq 2$) for some $\sg\in \SG_d$,}  
	$$\limsup_{|\sx|\to\infty} \frac{\log(\alpha_1(\su_\sx \Lambda))}{\log\left(\prod_{p\in S} |x_p|_p\right)} \geq \frac{1}{d}.$$
\end{lem}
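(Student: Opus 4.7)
The plan is to adapt the lower-bound strategy of \cite{AM09} to the $S$-arithmetic setting: I produce, for each $\epsilon>0$, a divergent family of ``short primitive vector'' events at a sequence of times $\sx_n\in \Q_S$, with probabilities controlled by \cref{prop:primmean} (first moment) and by \cref{thm:primrog} or \cref{integral over the cone} (second moment). The $\su_\sx$-invariance of $\mu_d$ is what transfers static probability bounds into estimates on the unipotent orbit, and a quasi-independent Borel--Cantelli combined with ergodicity of $\{\su_\sx\}$ then turns the divergent series into full measure.

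Fix $\epsilon>0$ and pick $(\sx_n)_{n\in \N}\subset \Q_S$ with $|x_{n,p}|_p\to \infty$ for every $p\in S$ and $\rd(\sx_n)\asymp n$. Choose product boxes $B_n=\prod_{p\in S}\{\bv_p\in \Q_p^d:\|\bv_p\|_p\leq r_{n,p}\}$ with $\prod_p r_{n,p}\asymp \rd(\sx_n)^{-1/d+\epsilon}$, so that $\vol_S(B_n)\asymp \rd(\sx_n)^{-1+d\epsilon}$ and any primitive $\bsv\in B_n$ certifies $\alpha_1\bigl(\su_{\sx_n}\Lambda\bigr)\geq \rd(\sx_n)^{1/d-\epsilon}$. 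Let $f_n=\mathbf{1}_{B_n}$ (or a semicontinuous thickening lying in $B_c^{SC}(\Q_S^d)$). By \cref{prop:primmean}, $\int \widehat{f_n}\,d\mu_d=\vol_S(B_n)/\zeta_S(d)\asymp n^{-1+d\epsilon}$. The second moment formulas --- \cref{thm:primrog} when $d\geq 3$, or \cref{integral over the cone} together with the Abel-summation approximation $\Phi_S\approx 1/(L_S\zeta_S(2))$ announced after \cref{integral over the cone} when $d=2$ --- give $\int \widehat{f_n}^{\,2}\,d\mu_d\ll \vol_S(B_n)$ once $\vol_S(B_n)\to 0$, because the $\Z_S^\times$-diagonal contribution dominates the quadratic main term in that regime. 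Paley--Zygmund then yields $\mu_d(\widehat{f_n}\geq 1)\gtrsim \vol_S(B_n)$. Setting $A_n:=\{\Lambda:\widehat{f_n}(\su_{\sx_n}\Lambda)\geq 1\}$, the $\su_\sx$-invariance of $\mu_d$ gives $\mu_d(A_n)\gtrsim n^{-1+d\epsilon}$, and therefore $\sum_n \mu_d(A_n)=\infty$.

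To upgrade divergence of $\sum \mu_d(A_n)$ to $\mu_d(\limsup A_n)>0$, I would invoke a quasi-independent Borel--Cantelli lemma. The required pairwise bound $\mu_d(A_n\cap A_m)\lesssim \mu_d(A_n)\mu_d(A_m)$ for $n\neq m$ is obtained by using $\su_\sx$-invariance to shift one of the unipotents off, reducing to $\mu_d\bigl(\{\widehat{f_n}(\Lambda)\geq 1,\;\widehat{f_m}(\su_{\sx_m-\sx_n}\Lambda)\geq 1\}\bigr)$, and then applying the two-point second moment formula to $F(\bsx,\bsy):=f_n(\bsx)f_m\bigl(\su_{\sx_m-\sx_n}^{-1}\bsy\bigr)$, estimating the $\Z_S^\times$ off-diagonal term uniformly in the time gap $|\sx_m-\sx_n|$. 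Since $\limsup A_n$ is contained in the $\su_\sx$-invariant set $\{\Lambda:\limsup_{|\sx|\to\infty}\frac{\log\alpha_1(\su_\sx\Lambda)}{\log \rd(\sx)}\geq \frac{1}{d}-\epsilon\}$, ergodicity of $\{\su_\sx\}$ on $(\SG_d/\Gamma_d,\mu_d)$ promotes positive to full measure. Letting $\epsilon\to 0$ along a countable sequence gives the claim for $\mu_d$-a.e. $\Lambda$. The non-primitive case ($\Lambda=\sg\Z_S^d$, $d\geq 3$) is identical, using \eqref{eq:meanvalue} and the $S$-arithmetic Rogers formula of \cite{Han21} in place of \cref{prop:primmean} and \cref{thm:primrog}.

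The main obstacle will be the quasi-independence step in the $d=2$ case. Since the $S$-primitive Siegel transform can be unbounded (witness the divergent example at the end of \cref{sec:meanvalues}), the pairwise correlation estimate cannot be read off from a crude $L^\infty$ bound as in the bounded setting of \cite{AM09,Veech98}; instead one must carefully track both the unit-scalar diagonal sum $\sum_{k\in \Z_S^\times}\int F(\bsx,k\bsx)\,d\bsx$ and the error term $\Phi_S-1/(L_S\zeta_S(2))$ at the two distinct time scales $\sx_n$ and $\sx_m$. It is precisely the cone formula of \cref{integral over the cone} that converts the $S$-arithmetic primitive second moment into the quantitative random Minkowski statement needed to close this step.
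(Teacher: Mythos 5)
Your overall strategy---Paley--Zygmund to lower-bound hitting probabilities, then a divergence (quasi-independent) Borel--Cantelli together with ergodicity of the unipotent flow---is structurally different from what the paper does, and the step you flag as the ``main obstacle'' is indeed a genuine gap that the paper's argument is specifically engineered to sidestep.

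The paper's proof (following Athreya--Margulis) inverts the logic. Rather than fixing a sequence of times $\sx_n$ and asking for a short vector in $\su_{\sx_n}\Lambda$, the paper constructs a family of \emph{large}-volume sets $A_\SK$ adapted to the Jordan normal form of $\su_\t$ (after reducing to that form via conjugation), such that any primitive $\bsv\in \Lambda\cap A_\SK$ \emph{itself determines} a time $\t_\eta$ (e.g.\ $\t_\eta=-\sy_\eta/\sz_\eta$) at which $\su_{\t_\eta}\bsv$ is short. Because $\vol_S(A_\SK)\to\infty$, the Random Minkowski bound \cref{prop:random minkowski} gives $\mu_d(P(\Lambda)\cap A_{\SK_\eta}=\emptyset)\to 0$; passing to a subsequence along which these \emph{avoidance} probabilities are summable, the \emph{convergence} direction of Borel--Cantelli yields that a.e.\ $\Lambda$ eventually meets every $A_{\SK_\eta}$. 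Full measure is immediate---no quasi-independence, no ergodicity, no decorrelation estimates for the unipotent flow.

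Your plan, by contrast, must work with sets $B_n$ whose volume $\vol_S(B_n)\asymp n^{-1+d\eps}$ \emph{shrinks}. In that regime the Random Minkowski inequality $\mu_d(\text{avoid }A)\le C'_d E(A)/\vol_S(A)$ is vacuous (the bound exceeds $1$), so the final sentence of your proposal---that ``the cone formula \ldots converts the $S$-arithmetic primitive second moment into the quantitative random Minkowski statement needed to close this step''---does not help: Random Minkowski controls a single-event avoidance probability, not the pairwise correlation $\mu_d(A_n\cap A_m)$ your divergence Borel--Cantelli requires, and it is tuned to $\vol_S(A)\to\infty$. The pairwise estimate you actually need reduces to bounding $\sum_{k\in\Z_S^\times}\int f_n(\bsx)\,f_m(\su_{\sx_m-\sx_n}k\bsx)\,d\bsx$ (together with the $\Phi_S$ error for $d=2$) uniformly in the time gap, and for small gaps or for $\bsv$ nearly aligned with the $\su_\t$-invariant direction this correlation is not obviously $O(\vol_S(B_n)\vol_S(B_m))$; this is precisely the kind of delicate decorrelation for polynomially-diverging unipotent flows that Athreya--Margulis introduced the Random Minkowski trick to avoid. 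You also invoke ergodicity of $\{\su_\sx:\sx\in\Q_S\}$ on $\SG_d/\Gamma_d$---true by a Howe--Moore-type theorem, but an extra input the paper does not need. Finally, you never reduce to Jordan normal form or design the target sets to the unipotent's geometry; that reduction is what lets the paper turn a lattice vector into a concrete time $\t_\eta$ at which the vector becomes short. In short: your first-moment and Paley--Zygmund computations are fine, but the quasi-independence step is not closed by the tools cited, and the paper's route makes that step unnecessary.
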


The most technical part of the proof is constructing a family of sets which gives the desired lower bound. In order to prove the lower bound, we will make use of an $S$-arithmetic random Minkowski theorem analogous to \cite[Theorem 2.2]{AM09}. The idea is to bound the probability that a lattice will avoid a set in terms of the volume of the set, capturing the intuitive idea that large sets are harder to avoid than small sets.
\begin{prop}[Random Minkowski]\label{prop:random minkowski}
	There is a constant $C'_d>0$ so that if $A = \prod_{p\in S} A_p$, where each $A_p \subseteq \Q_p^d$ is a measurable subset with $\mu_d(A) >0$, then
	\[\begin{split}
	\mu_d\left(\left\{\Lambda \in \SG_d/\Gamma_d: (\Lambda-\{\origin\}) \cap A =\emptyset\right\}\right)
	&\leq \mu_d\left(\left\{\Lambda \in \SG_d/\Gamma_d: P(\Lambda) \cap A =\emptyset\right\}\right) \\
	&\hspace{-1.5in}\leq \begin{cases} \dfrac{C'_d}{\vol_S(A)} &\text{when } d \geq 3,\\[0.15in]
	\dfrac{{ C'_d} E(A)}{\vol_S(A)} &\text{when } d = 2.
	\end{cases}
		\end{split}\]
	 Here for $d=2$, we define  
 $$E(A) = \left( (\log\vol_S(A))^{2+s} +\left[\sum_{p\in S}\prod_{p'\in S- \{p\}} \vol_{p'}(A_{p'})\right] \right),$$
 and we additionally need $\frac {\vol_S(A)}{(\log\vol_S(A))^{1+s}}>r_0$, where $r_0$ is given by \cref{var upper bound} (3).
\end{prop}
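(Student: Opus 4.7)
The plan is to apply a second-moment / Chebyshev argument to the primitive Siegel transform of the indicator $\chi=\mathbf{1}_A$. Set $\widehat{\chi}(\Lambda)\defeq\#(P(\Lambda)\cap A)$, so that $\{P(\Lambda)\cap A=\emptyset\}=\{\widehat{\chi}=0\}$. Because $P(\Lambda)\subseteq \Lambda-\{\origin\}$, the first inequality in the proposition is automatic, so I only need to bound $\mu_d(\{\widehat{\chi}=0\})$. Since $\widehat{\chi}\ge 0$, Chebyshev applied to $\widehat{\chi}-\mathbb{E}[\widehat{\chi}]$ with threshold $\mathbb{E}[\widehat{\chi}]$ yields
\[
\mu_d(\{\widehat{\chi}=0\})\;\le\;\frac{\mathrm{Var}(\widehat{\chi})}{\mathbb{E}[\widehat{\chi}]^2}.
\]
By \cref{prop:primmean}, $\mathbb{E}[\widehat{\chi}]=\vol_S(A)/\zeta_S(d)$, which already supplies the denominator of size $\vol_S(A)^2$. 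Everything therefore reduces to establishing a variance bound of size $\vol_S(A)$ in the $d\ge 3$ case, and of size $\vol_S(A)\cdot E(A)$ in the $d=2$ case.

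For $d\ge 3$, I would apply \cref{thm:primrog} with $F(\bsx,\bsy)=\chi(\bsx)\chi(\bsy)$, so that $\widehat{F}=\widehat{\chi}^{\,2}$. The first term of \cref{thm:primrog} is exactly $\mathbb{E}[\widehat{\chi}]^2$, giving
\[
\mathrm{Var}(\widehat{\chi})\;=\;\frac{1}{\zeta_S(d)}\sum_{k\in \Z_S^\times}\vol_S(A\cap k^{-1}A).
\]
Since $A=\prod_{p\in S}A_p$, one has $\vol_p(A_p\cap k^{-1}A_p)\le \min(1,|k|_p^{-d})\vol_p(A_p)$ for each place. Parametrising $\Z_S^\times=\{\pm 1\}\cdot\{p_1^{a_1}\cdots p_s^{a_s}\}$, the product formula $\prod_{p\in S}|k|_p=1$ forces the resulting multi-series $\sum_{(a_1,\dots,a_s)\in\Z^s}\prod_p\min(1,|k|_p^{-d})$ to split as a product of two-sided geometric series in $p_i^{-d}$, which is summable. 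This gives $\mathrm{Var}(\widehat{\chi})\le C_d\vol_S(A)$, and Chebyshev supplies the claimed estimate $C'_d/\vol_S(A)$.

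For $d=2$, the function $\widehat{\chi}$ need not be square-integrable on $\SG_2/\Gamma_2$, so direct use of \cref{rank 2 1st form} is delicate. Instead I would work on the cone $C_S$ via \cref{integral over the cone}(2), taking $F(\bsx,\bsy)=\chi(\bsx)\chi(\bsy)$ and computing the first and second moments of the weighted transform $\rd(\sv)\widehat{\chi}(\sv^{1/2}\sg\Gamma_2)$. The dominant terms cancel as in the $d\ge 3$ case, and the residual splits into the two pieces that make up $E(A)$. First, replacing $\Phi_S$ by its limiting value $1/(L_S\zeta_S(2))$ via the Abel/Euler asymptotic alluded to right after \cref{integral over the cone} introduces an error governed by the tail of $\sum_{m\in \N_\sx}\varphi(m)/m^3$, which contributes the $(\log\vol_S(A))^{2+s}$ factor. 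Second, the diagonal-orbit term $\frac{1}{\zeta_S(2)}\sum_{k\in\Z_S^\times}\int \chi(\bsx)\chi(k\bsx)\,d\bsx$ contributes the $\sum_{p\in S}\prod_{p'\neq p}\vol_{p'}(A_{p'})$ piece, since for fixed $\bsx\in A$ the condition $k\bsx\in A$ pins $|k|_p$ down at one distinguished place and loses a factor of $\vol_p(A_p)$. The hypothesis $\vol_S(A)/(\log\vol_S(A))^{1+s}>r_0$ is precisely what is needed to apply the later ``var upper bound'' lemma when transferring the cone estimate back to $\SG_2/\Gamma_2$.

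The main obstacle is the variance estimate itself. For $d\ge 3$ the difficulty is the product-formula bookkeeping across places making the unit-sum converge; it is clean but requires care. For $d=2$ the obstacle is more subtle: one must pass from $C_S$ back to $\SG_2/\Gamma_2$ while simultaneously controlling the logarithmic discrepancy from the Euler $\varphi$-summatory tail appearing in \cref{rank 2 1st form}, and it is exactly this tail that produces the two summands defining $E(A)$.
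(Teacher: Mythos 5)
Your Chebyshev argument for $d\ge 3$ is correct and equivalent (up to an immaterial $\mathbb E[X]^2$ vs.\ $\mathbb E[X^2]$ in the denominator) to the paper's Cauchy--Schwarz / Paley--Zygmund pairing of $\widehat{\mathbf 1_A}$ against the indicator $g_A$ of $\{\sg : \sg P(\Z_S^d)\cap A\neq\emptyset\}$, and the unit-sum estimate you sketch is precisely \cref{lem:boundedunits}. That case is fine.

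For $d=2$ there is a real gap. Chebyshev applied to the weighted transform $\rd(\sv)\widehat\chi(\sv^{1/2}\sg\Gamma_2)$ on the probability space $(C_S, L_S\mu_{C_S})$ bounds $L_S\,\mu_{C_S}\bigl(\{(\sg,\sv): \sv^{1/2}\sg P(\Z_S^2)\cap A=\emptyset\}\bigr)$, which is a statement about the cone measure, not the quantity $\mu_2\bigl(\{\sg\Gamma_2: \sg P(\Z_S^2)\cap A=\emptyset\}\bigr)$ asserted by the proposition. Since $\sv^{1/2}\sg\Z_S^2$ has covolume $\rd(\sv)\neq 1$ and $A$ is an arbitrary Borel product, the two events are genuinely different, and you give no mechanism to pass from one to the other; the hypothesis $\vol_S(A)/(\log\vol_S(A))^{1+s}>r_0$ is needed only to invoke \cref{var upper bound}(3), not to effect such a transfer. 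The paper circumvents this by using Cauchy--Schwarz rather than Chebyshev: it pairs $\rd(\sv)\widehat\chi(\sv^{1/2}\sg\Gamma_2)$ against $g_A$, the indicator of $\{\sg\Gamma_2: \sg P(\Z_S^2)\cap A\neq\emptyset\}$ on $\SG_2/\Gamma_2$, extended to $C_S$ constant in $\sv$, so that the $\|g_A\|_{\SG_2/\Gamma_2,1}$ produced by Cauchy--Schwarz equals $1-\mu_2(\{\sg P\cap A=\emptyset\})$ directly; the cone enters only in estimating $\|\rd(\sv)\widehat\chi\|_{L^2(C_S)}^2$ via \cref{var upper bound}(3). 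You would need that pairing step to close the argument. Two smaller corrections: $\widehat\chi$ \emph{is} square-integrable on $\SG_2/\Gamma_2$ by \cref{thm: integrability for dim2} (the reason to move to the cone is not integrability but that $\Phi_S$ is close to $1/(L_S\zeta_S(2))$ there), and the term $\sum_{p\in S}\prod_{p'\neq p}\vol_{p'}(A_{p'})$ in $E(A)$ comes from the small-determinant regime of the $\Phi_S$-integral handled by \cref{Schmidt Lemme 3}(1),(3), not from the diagonal unit sum, which contributes only $O(\vol_S(A))$.
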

The proof of the lower bound will then use the following corollary of \cref{prop:random minkowski} {and $s=\# S_f$}.

\begin{coro}\label{cor:limit rand minkowski}
Let $\{A_k=\prod_{p\in S} A_k^{(p)}\}_{k\in\N}$ be a sequence of $\Q_S^d$ for which
\begin{itemize}
\item ($d\geq 3$) $\vol_S(A_k)\rightarrow \infty$ as $k\rightarrow \infty$; 
\item ($d= 2$) $\vol_p(A_k^{(p)})\to \infty$ as $k\rightarrow \infty$ for all $p\in S$.
\end{itemize} 
Then
	$$\lim_{k\to\infty}\mu_d(\{\sg\Gamma_d\in \SG_d/\Gamma_d: \sg P(\Z_S^d) \cap {A_k} = \emptyset\}) = 0.$$
\end{coro}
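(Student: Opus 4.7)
The plan is to apply \Cref{prop:random minkowski} to the sequence $\{A_k\}_{k \in \N}$ and show that the resulting upper bounds tend to zero under the given volume hypotheses. Since the sets $A_k$ are explicitly assumed to be product sets $\prod_{p \in S} A_k^{(p)}$, the hypotheses of \Cref{prop:random minkowski} are satisfied for each $k$, and the only task is to verify the limit behavior of the bound.

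For the case $d \geq 3$, the argument is immediate: \Cref{prop:random minkowski} gives
\[
\mu_d\big(\{\sg\Gamma_d : \sg P(\Z_S^d) \cap A_k = \emptyset\}\big) \leq \frac{C'_d}{\vol_S(A_k)},
\]
and since $\vol_S(A_k) \to \infty$ by hypothesis, the right-hand side tends to $0$.

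For the case $d=2$, first observe that $\vol_S(A_k) = \prod_{p \in S} \vol_p(A_k^{(p)}) \to \infty$, since every factor diverges. In particular the technical condition $\vol_S(A_k)/(\log \vol_S(A_k))^{1+s} > r_0$ from \Cref{prop:random minkowski} is satisfied for all sufficiently large $k$, so the proposition applies and yields the bound
\[
\mu_d\big(\{\sg\Gamma_d : \sg P(\Z_S^d) \cap A_k = \emptyset\}\big) \leq \frac{C'_d \, E(A_k)}{\vol_S(A_k)}.
\]
Expanding the definition of $E(A_k)$ gives
\[
\frac{E(A_k)}{\vol_S(A_k)} = \frac{(\log \vol_S(A_k))^{2+s}}{\vol_S(A_k)} + \sum_{p \in S} \frac{1}{\vol_p(A_k^{(p)})}.
\]
The first summand vanishes in the limit because $(\log x)^{2+s}/x \to 0$ as $x \to \infty$, and the second summand is a finite sum of $s+1$ reciprocals each tending to $0$ by the coordinatewise hypothesis on $\vol_p(A_k^{(p)})$. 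Hence the bound tends to $0$.

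There is no substantive obstacle; the only subtlety is checking that the $d=2$ hypothesis is strong enough to handle the extra additive term in $E(A_k)$. The fact that volume divergence is assumed \emph{at each place} (not just jointly) is precisely what makes the sum $\sum_{p\in S}\prod_{p' \in S - \{p\}} \vol_{p'}(A_k^{(p')})$ small relative to the total volume $\vol_S(A_k)$, and this motivates why the corollary is stated with the stronger coordinatewise hypothesis when $d=2$. Finally, the inclusion $\{\sg \Gamma_d : (\sg\Z_S^d - \{O\}) \cap A_k = \emptyset\} \subseteq \{\sg \Gamma_d : \sg P(\Z_S^d) \cap A_k = \emptyset\}$ already built into \Cref{prop:random minkowski} ensures that the same limit also controls the analogous event for the full lattice $\sg\Z_S^d$.
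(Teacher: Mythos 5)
Your proof is correct and follows the same route as the paper's own (very terse) argument, which simply observes that the $d\geq 3$ case is immediate and that for $d=2$ ``$\vol_S(A)$ grows faster than $E(A)$.'' You spell out the algebraic simplification $E(A_k)/\vol_S(A_k) = (\log\vol_S(A_k))^{2+s}/\vol_S(A_k) + \sum_{p\in S} 1/\vol_p(A_k^{(p)})$ and verify the technical threshold $\vol_S(A_k)/(\log\vol_S(A_k))^{1+s} > r_0$, both of which the paper leaves implicit, but the argument is identical in substance.
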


\begin{proof}[Proof of \cref{cor:limit rand minkowski}] 
	This follows directly when $d\geq 3$, and when $d=2$, we notice that $\vol_S(A)$ grows faster than $E(A)$, so the upper bound tends to zero in the limit.
\end{proof}

\section{Proofs of Primitive integral formulas}\label{Proofs of Primitive integral formulas} 
In this section, we start with \cref{A mean value formula} where we prove the mean value theorem for primitive $S$-arithmetic lattices (\cref{prop:primmean}). \cref{sec:primrogers} proves the primitive second moment for $d\geq 3$ as stated in \cref{thm:primrog}. The rest {of} the section is devoted to \cref{sec:secondmoment d=2} where we prove \cref{rank 2 1st form} in two parts, giving the integral formula first, and then later proving integrability.

\subsection{A mean value formula}\label{A mean value formula}

Our goal is to prove the mean value theorem of \cref{prop:primmean}. 
\begin{proof}[Proof of \cref{prop:primmean}]
	Recalling \cref{eq:meanvalue}, since $\widehat{f} \leq \widetilde{f}$, we know that $\widehat{f}$ is also integrable for $d\geq 2$. To calculate the integral formula we first closely follow the proof of \cite[Proposition 3.11]{HLM2017}. Notice that the map $f\mapsto \int_{\SG_d/\Gamma_d}\widehat{f} d\mu_d(\sg)$ is a $\SG_d$-invariant linear functional and thus by \cref{lem:Reiszrep}, is given by a linear combination of product measures $\otimes_{p\in S} \nu_p$, where each $\nu_p$ is either the Haar measure $\vol_p$ or the delta measure at zero, say $\delta_p$. Since the $\SG_d$-orbit of the set $P(\Z_S^d)$ excludes points containing zero in $\Q_p^d$ for any $p\in S$, 
	as in the proof of {\cite[Lemma 3.12]{HLM2017}}, the only possible measure with nonzero coefficient in the linear combination is the product of Lebesgue measures, which is exactly the measure $\vol_S$ which we consider on $\Q_S^d$. Thus there is a positive constant $c>0$ so that
	\begin{equation}\label{eq:representation}\int_{\SG_d/\Gamma_d} \widehat{f}(\sg) \,d\mu_d(\sg) = c \int_{\Q_S^d} f(\bsx) \,d\bsx.
 \end{equation}

We decompose $\Z_S^d- \{\origin\}$ into subsets determined by the $\sgcd$ (\cref{def:Sgcd}) to obtain
\begin{equation}\label{eq:primdecomp} \Z_S^d-\{\origin\} = \bigsqcup_{\ell \in \N_S} \ell P(\Z_S^d)
\;\Rightarrow\;
\widetilde{f}(\sg\Z_S^d)=\sum_{\ell\in \N_S} \widehat{f_\ell} (\sg\Z_S^d),
\end{equation}
where $f_\ell(\cdot) = f(\ell\cdot)$ and $\N_S$ is defined in \cref{S-arithmetic numbers}. We compute by \eqref{eq:meanvalue}, \eqref{eq:primdecomp}, and \eqref{eq:representation}
$$\int_{\Q_S^d} f(\bsx)\,d\bsx = \int_{\SG_d/\Gamma_d} \widetilde{f} \,d\mu_d = \sum_{\ell \in \N_S} \int_{\SG_d/\Gamma_d} \widehat{f_\ell} \,d\mu_d = \sum_{\ell \in \N_S} c \int_{\Q_S^d} f_\ell(\bsx)\,d\bsx = \sum_{\ell \in \N_S} \frac{c}{\ell^d}\int_{\Q_S^d} f(\bsx)\,d\bsx,$$
where in the last equality we use that the Jacobian of the mapping $\bsx\mapsto \ell \bsx$ is the product of the Jacobians on each component of the product space, which is $\frac{1}{\ell^d}$ on $\R^d$, and 1 on $\Q_p^d$ for $p\in S_f$ since $\ell \in \N_S$ is a unit of $\Q_p$ and thus preserves volume. Thus comparing coefficients we have now shown $1 = c \sum_{\ell \in \N_S} \frac{1}{\ell^d} = c \zeta_S(d),$ as desired.
\end{proof}

Now we will obtain the second moment formula for the $S$-primitive Siegel transform using different methods for $d\ge 3$ and $d=2$, respectively. As a result, the integral formula for the $2$-dimensional case looks very different from {the integral formula} for the higher dimensional case, as already known as in \cite{Rogers55, Schmidt60} for the real case.

\subsection{Primitive second moment formula for $d\geq 3$} \label{sec:primrogers}
One can obtain \cref{thm:primrog} by applying the similar strategy used in the proof of \cref{prop:primmean}, following the ideas of \cite{Han21}. 

\begin{proof}[Proof of \cref{thm:primrog}]
Since $F\in B^{SC}_c((\Q_S^d)^2)$ has compact support, we can bound $F(\bsx,\bsy) \leq f(\bsx)f(\bsy)$ for some function $f \in B_c^{SC}(\Q_S^d)$, and so by \cite[Theorem~2.5]{Han21} $\widehat{F} \leq (\widetilde{f})^2 \in L^1(\SG_d/\Gamma_d)$. In particular, $\widehat{f}\in L^2(\SG_d/\Gamma_d)$ for any $f\in B_c^{SC}(\Q_S^d)$. 

Note that a pair $(\bv^1, \bv^2)\in P(\Z_S^d)^2$ is linearly dependent if and only if there is some $k\in \Z_S^\times$ for which $\bv^1=k\bv^2$.
Hence we have that
\[
P(\Z_S^d)\times P(\Z_S^d)
=\left\{(\bv^1, \bv^2) : \bv^1,\; \bv^2\text{ are linearly independent}\right\}\sqcup\bigsqcup_{k\in \Z_S^\times} \left\{(\bv, k\bv): \bv\in P(\Z_S^d)\right\}.
\]
Put $\LI(\Id_2)=\left\{(\bv^1, \bv^2) \in P(\Z_S^d)^2: \bv^1,\; \bv^2\text{ are linearly independent}\right\}$ and $\LD(k)= \left\{(\bv, k\bv): \bv\in P(\Z_S^d)\right\}$ for each $k\in \Z_S^\times$. By the similar argument in \cite[Section 3]{Han21}, it suffices to show the following integral formulas:
\begin{align}
\int_{\SG_d/\Gamma_d} \sum_{(\bv^1, \bv^2)\in \LI(\Id_2)} F(\sg\bv^1, \sg\bv^2) d\mu_d(\sg)
&=\frac 1 {\zeta_S(d)^2} \int_{(\Q_S^d)^2} F(\bsx, \bsy) d\bsx d\bsy; \label{eq 1:primrog}\\
\int_{\SG_d/\Gamma_d} \sum_{(\bv^1, \bv^2)\in \LD(k)} F(\sg\bv^1, \sg\bv^2) d\mu_d(\sg)
&=\frac 1 {\zeta_S(d)} \int_{\Q_S^d} F(\bsx, k\bsx) d\bsx\label{eq 2:primrog}
\end{align}
for $k\in \Z_S^\times$.

For \cref{eq 1:primrog}, in the spirit of {\cite[Step 1 in the proof of Theorem 3.1]{Han21}}, the operator on $B^{SC}_c((\Q_S^d)^2)$ given by the left-hand side of \cref{eq 1:primrog} can be expressed as the integration by a single measure on $(\Q_S^d)^2$, which comes to be the Lebesgue measure, using \cref{lem:Reiszrep}. I.e., there is a positive constant $a>0$ for which
\[
\int_{\SG_d/\Gamma_d} \sum_{(\bv^1, \bv^2)\in \LI(\Id_2)} F(\sg\bv^1, \sg\bv^2) d\mu_d(\sg)
=a \int_{(\Q_S^d)^2} F(\bsx, \bsy) d\bsx d\bsy.
\]

Since
\[\begin{split}
&\left\{(\bv^1, \bv^2)\in (\Z_S^d)^2 : \bv^1,\; \bv^2\;\text{are linearly independent}\right\}\\
&=\bigsqcup_{\ell_1, \ell_2\in \N_S}
\left\{(\ell_1 \bw^1, \ell_2 \bw^2) : \bw^1, \bw^2\in P(\Z_S^d)\;\text{are linearly independent}\right\},
\end{split}\]
by considering functions $F_{\ell_1, \ell_2}(\bv^1, \bv^2):=F(\ell_1\bv^1, \ell_2\bv^2)$ for each $(\ell_1, \ell_2)\in \N_S^2$ and applying \cite[Theorem 3.1]{Han21}, it follows that
\[\begin{split}
&\int_{(\Q_S^d)^2} F(\bsx, \bsy)d\bsx d\bsy
=\int_{\SG_d/\Gamma_d} \sum_{\scriptsize \begin{array}{c}
\bv^1, \bv^2\in \Z_S^d\\
\text{lin. indep.}\end{array}} F(\sg\bv^1, \sg\bv^2) d\mu_d(\sg)\\
&=\int_{\SG_d/\Gamma_d} \sum_{\ell_1, \ell_2\in \N_S} \sum_{(\bv^1,\bv^2) \in \LI(\Id_2)} F_{\ell_1,\ell_2} (\sg\bv^1, \sg\bv^2) d\mu_d(\sg)
=\sum_{\ell_1,\ell_2\in \N_S} a \int_{(\Q_S^d)^2} F_{\ell_1,\ell_2} (\bsx, \bsy)d\bsx d\bsy\\
&=a\sum_{\ell_1,\ell_2\in \N_S} \frac 1 {\ell_1^d\cdot \ell_2^d} \int_{(\Q_S^d)^2} F(\bsx, \bsy) d\bsx d\bsy
=a \zeta_S(d)^2 \int_{(\Q_S^d)^2} F(\bsx, \bsy) d\bsx d\bsy
\end{split}\]
which shows that $a=1/\zeta_S(d)^2$.

One can obtain \cref{eq 2:primrog} by applying \cref{prop:primmean} with the function $\bsx \mapsto F(\bsx, k\bsx)$.
\end{proof}

\subsection{Primitive second moment for $d=2$}\label{sec:secondmoment d=2} We first remark that for the case when $d=2$, we don't use {\cref{lem:Reiszrep}} for the second moment formula. The principle of the formula is based on \emph{the folding and unfolding} of fundamental domains:  by considering $\SG_2\subseteq (\Q_S^2)^2$, we have that
\[
\int_{\SG_2/\Gamma_2} \sum_{\sh\in \Gamma_2}F(\sg\sh) d\mu_2(\sg)= \int_{\SG_2} F(\sg)d\mu_2(\sg)
\]
for any $F\in SC(\SG_2)$.

Recall $\LI(\Id_2)$ from the previous section. We split $\Omega(\Id_2)$ into $\Gamma_2$-orbits under the diagonal action $\gamma(\bv^1,\bv^2) = (\gamma\bv^1, \gamma \bv^2)$ for $\gamma \in \Gamma_2$. These orbits divide $\LI(\Id_2)$ by determinant, where we consider pairs $(\bv^1, \bv^2)$ as $2\times 2$ matrices. That is, $\LI(\Id_2)=\bigsqcup_{n\in \Z_S-\{0\}} D_n$, where
\[
D_n:=\left\{(\bv^1, \bv^2)\in P(\Z_S^2)\times P(\Z_S^2) :
\det\left(\bv^1, \bv^2\right)=n \right\}
\]
for each $n\in \Z_S-\{0\}$.

We first prove the integral formula in \cref{rank 2 1st form} allowing the possibility that both quantities are infinite and then show the integrability by showing the finiteness of the integral on the right hand side.

\begin{lem}\label{lemma: decomposition}
For each $n\in \Z_S-\{0\}$, $D_n$ is an $\Gamma_2$-invariant set which is the union of $\varphi(\rd(n))$ components of irreducible $\Gamma_2$-orbits, where $\varphi(\cdot)$ is the Euler totient function. In particular the representatives of the $\Gamma_2$-orbits are
\[
\bpm
1 & \ell\\
0 & n \epm \; \text{ for }\;\ell \in\left\{0,1,\ldots,\rd(n)-1: \gcd(\ell, \rd(n))=1\right\},
\]
where $\rd(\cdot)$ is defined in \cref{S-arithmetic numbers}.
\end{lem}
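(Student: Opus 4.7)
\emph{Approach.} My plan is to verify $\Gamma_2$-invariance, reduce every pair in $D_n$ to a canonical representative by acting by a single element of $\Gamma_2$, and then enumerate the resulting equivalence classes using the stabilizer of $\be_1$.

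\emph{Step 1 (invariance and canonical form).} For $\gamma\in\Gamma_2$ one has $\det(\gamma\bv^1,\gamma\bv^2)=\det(\gamma)\cdot n=n$, and $\gamma$ preserves $P(\Z_S^2)=\Gamma_2\cdot\be_1$; hence $D_n$ is $\Gamma_2$-invariant. Given $(\bv^1,\bv^2)\in D_n$, choose $\gamma_1\in\Gamma_2$ with $\gamma_1^{-1}\bv^1=\be_1$. Then $(\bv^1,\bv^2)\sim_{\Gamma_2}\left(\be_1,\gamma_1^{-1}\bv^2\right)$, and the determinant condition forces $\gamma_1^{-1}\bv^2=\bpm a \\ n \epm$ for some $a\in\Z_S$; primitivity of this new second column is inherited from $\bv^2$.

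\emph{Step 2 (stabilizer action and identification of the quotient).} The $\Gamma_2$-stabilizer of $\be_1$ is exactly $\left\{\bpm 1 & k \\ 0 & 1 \epm : k\in\Z_S\right\}$, which acts on $\bpm a \\ n \epm$ by $a\mapsto a+kn$. Thus the $\Gamma_2$-orbits in $D_n$ through $\left(\be_1,\bpm a \\ n \epm\right)$ are parametrized by classes $a\in\Z_S/n\Z_S$ satisfying $\sgcd(a,n)=1$. Writing $n=\epsilon\, p_1^{k_1}\cdots p_s^{k_s}\,\rd(n)$ with $\epsilon\in\{\pm 1\}$, the leading factor is a unit of $\Z_S$, so $n\Z_S=\rd(n)\Z_S$; since $\rd(n)\in\N_S$ is coprime to each $p\in S_f$, every $p_i$ is already invertible modulo $\rd(n)$, and the natural map yields a ring isomorphism $\Z/\rd(n)\Z\xrightarrow{\sim}\Z_S/n\Z_S$.

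\emph{Step 3 (primitivity and count).} Take representatives $a\in\{0,1,\ldots,\rd(n)-1\}\subset\Z$. Applying \cref{def:Sgcd} to $\bpm a \\ n \epm$: after multiplying by suitable powers of $p_1,\ldots,p_s$ to clear $p$-adic denominators, all $p_i$-factors are absorbed into the $p$-adic part of the resulting integer pair, and since $\gcd(p_i,\rd(n))=1$ the $\N_S$-part of the integer gcd reduces to $\gcd(a,\rd(n))$. Therefore $\sgcd(a,n)=1$ iff $\gcd(a,\rd(n))=1$, producing exactly $\varphi(\rd(n))$ orbits with the stated representatives $\bpm 1 & \ell \\ 0 & n \epm$, $\gcd(\ell,\rd(n))=1$, $0\le \ell < \rd(n)$.

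The subtle point I expect to be the main obstacle is Step 3, namely translating the $S$-adic primitivity condition $\sgcd(a,n)=1$ into the purely integer condition $\gcd(a,\rd(n))=1$. This requires careful bookkeeping of $p$-adic valuations when clearing denominators in $\bpm a \\ n \epm$; once this reduction is established, the counting parallels the familiar real-case enumeration via the Euler $\varphi$-function.
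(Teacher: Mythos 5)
Your proof is correct and follows essentially the same route as the paper's: reduce $(\bv^1,\bv^2)$ to $(\be_1, \tp{(a,n)})$ via primitivity of the first column, act by the unipotent stabilizer of $\be_1$ to normalize $a$ modulo $n$, identify $\Z_S/n\Z_S\simeq\Z/\rd(n)\Z$, and recast the primitivity constraint $\sgcd(a,n)=1$ as $\gcd(a,\rd(n))=1$. The only difference is that you frame the unipotent action explicitly as the stabilizer subgroup of $\be_1$, which is a slightly cleaner way to say the same thing.
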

     Recall that $\rd(n)\in \N_S$ for $n\in \Z_S-\{0\}$. Moreover if $n\in \Z_S^\times$, then $\rd(n) = 1$ and there is exactly one $\Gamma_2$-orbit.
\begin{proof} By construction $D_n$ is $\Gamma_2$-invariant.
Let $(\bv^1, \bv^2)\in D_n$ be given. Since $\bv^1\in P(\Z_S^2)$, there is $\sg\in \Gamma_2$ for which $\sg \bv^1=\be_1$ so that
\[
\Gamma_2\left(\bv^1, \bv^2\right)
=\Gamma_2\bpm
1 & y \\
0 & n 
\epm,
\]
where ${\tp(y,n)}\in P(\Z_S^2)$. By the action of a unipotent element
\[
\bpm
1 & k \\
0 & 1 \epm \bpm
1 & y \\
0 & 1 \epm=\bpm
1 & y+kn \\
0 & n \epm,
\]
one can choose $k\in \Z_S$ such that $\ell=y+kn$ is in the fundamental domain for $\Z_S/n\Z_S $. It is easy to show that  $\Z_S/n\Z_S \simeq \Z_S/\rd(n)\Z_S\simeq \Z/\rd(n)\Z$. So the number of $\Gamma_2$-orbits in $D_n$ is the number of $\ell\in \{0,1, \ldots, \rd(n)-1\}$ such that ${\tp{(\ell,n)}}\in P(\Z_S^2)$.
By \cref{prop:sgcd}, ${\tp{(\ell,n)}}\in P(\Z_S^2)$ if and only if $\sgcd(\ell,n) = 1$, which is equivalent to the fact that $\gcd(\ell, \rd(n))=1$ by the definition of $\sgcd$.
\end{proof}

\begin{proof}[Proof of \cref{rank 2 1st form} (integral formula)]
We may assume that $F$ is non-negative so that Tonelli's theorem is applicable.
Since $D_n$ is $\SG_2$-invariant,
\begin{equation}\label{eq 3: rank 2 1st form}
\int_{\SG_2/\Gamma_2}
\widehat F(\sg\Z_S^2)\:d\mu_2(\sg)= \sum_{n\in \Z_S} \int_{\SG_2/\Gamma_2} \sum_{
(\bv^1, \bv^2)\in D_n} F(\sg\bv^1, \sg\bv^2) \:d\mu_2(\sg).
\end{equation}

We first claim that for $n\neq 0\in \Z_S$
\begin{equation}\label{eq 1: rank 2 1st form}
\int_{\SG_2/\Gamma_2} \sum_{(\bv^1,\bv^2)\in D_n} F(\sg\bv^1, \sg\bv^2) d\mu_2(\sg)
=\frac {\varphi(\rd(n))} {\zeta_S(2)}\int_{\SG_2} F(\sg J_n)d\eta_2(\sg),
\end{equation}
where $J_n=\bpm
1 & 0\\
0 & n \epm$. 


Set $m=\rd(n)$ and 
$J_{\ell,n}=\bpm
1 & \ell \\
0 & n \epm$, where $0\le \ell < m$ 
with $\gcd(\ell,m) = 1$. 
By Lemma~\ref{lemma: decomposition},
\[\begin{split}
\int_{\SG_2/\Gamma_2} \sum_{(\bv^1,\bv^2)\in D_n} F(\sg\bv^1, \sg\bv^2) d\mu_2(\sg) 
&=\sum_{\substack{
0 \le \ell < m \\
\gcd(\ell,m)=1}}\int_{\SG_2/\Gamma_2}
\sum_{(\bv^1,\bv^2)\in \Gamma_2\cdot J_{\ell,n}} F\left(\sg (\bv^1, {\bv^2})\right) d\mu_2(\sg)\\
&=\sum_{\substack{
0 \le \ell < m \\
\gcd(\ell,m)=1}}\int_{\SG_2/\Gamma_2}
\sum_{(\bv^1,\bv^2)\in \Gamma_2} F_{\ell, n}\left(\sg (\bv^1, \bv^2){J_{\ell,n}}\right) d\mu_2(\sg)\\
&=\sum_{\substack{
0 \le \ell < m \\
\gcd(\ell,m)=1}}\int_{\SG_2}
F\left(\sg J_{\ell,n}\right) d\mu_2(\sg)\\
&=\frac {\varphi(m)} {\zeta_S(2)}
\int_{\SG_2} F(\sg J_n) d\eta_2(\sg),
\end{split}\]
where we recall $\mu_2$ and $\eta_2$ are both $\SG_2$-invariant measures on $\SG_2/\Gamma_2$ with different normalization ($\mu_2(\SG_2/\Gamma_2)=1=\frac 1 {\zeta_S(2)}\eta_2(\SG_2/\Gamma_2)$) inheritted from unimodular Haar measures.
In the last line we use $\SG_2$-invariance and the change of coordinates $\sg = \sg' \begin{pmatrix} 1 & -\ell \\ 0 & 1\end{pmatrix}$.

For the rest of the proof, as in the proof for the case when $d\ge 3$, we obtain the fact that for each $k\in \Z_S^\times$,
\begin{equation}\label{eq 2: rank 2 1st form}
\int_{\SG_2/\Gamma_2} \sum_{\bv\in P(\Z_S^2)} F\left(\sg(\bv, k\bv)\right) d\mu_2(\sg)
=\frac 1 {\zeta_S(2)} \int_{\Q_S^d} F(\bsx, k\bsx)d\bsx
\end{equation}
from \cref{prop:primmean} with the function $\bsx \mapsto F(\bsx, k\bsx)$.
Therefore the integral formula follows from \eqref{eq 3: rank 2 1st form}, \eqref{eq 1: rank 2 1st form} and \eqref{eq 2: rank 2 1st form}.
\end{proof}

We now introduce \cref{thm: integrability for dim2} whose proof will complete the proof of \cref{rank 2 1st form}.
\begin{prop}\label{thm: integrability for dim2} $\widehat F\in L^1(\SG_2/\Gamma_2)$ for a non-negative function $F\in B^{SC}_c(\Q_S^2\times \Q_S^2)$.
\end{prop}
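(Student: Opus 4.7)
The plan is to invoke the integral formula established in \cref{rank 2 1st form} (which holds as an identity in $[0,\infty]$ by Tonelli, since $F\ge 0$) and show that each of the two sums on its right-hand side is finite. Since $F$ has compact support, fix, for every $p\in S$, a constant $R_p>0$ bounding $|\cdot|_p$ of every coordinate of every matrix in $\supp F$. First I would dispatch the sum over $n\in\Z_S-\{0\}$ by showing it is a finite sum of finite terms. If $F(\sg J_n)\ne 0$, then $|n|_p=|\det(\sg J_n)|_p\le 2R_p^2$ at every $p\in S$, while $|n|_p\le 1$ for $p\notin S$ since $n\in\Z_S$; these bounds confine $n$ to a finite subset of $\Z_S$. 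For each such $n$ I would change variables on $\SG_2$ from the Iwasawa coordinates $(\sa,\sb,\sc)$ defining $d\eta_2$ to $(a_1,a_2,b_1):=(\sa,\sc\sa,\sb n)$, whose Jacobian is $|\sa n|_S=|a_1|_S\rd(n)$; the fourth entry $b_2=(a_2 b_1+n)/a_1$ is forced by $\det(\sg J_n)=n$. This rewrites
\[
\int_{\SG_2}F(\sg J_n)\,d\eta_2(\sg)=\frac{1}{\rd(n)}\int F\bigl((a_1,a_2)^T,(b_1,b_2)^T\bigr)\frac{da_1\,da_2\,db_1}{|a_1|_S},
\]
and the apparent $a_1=0$ singularity is tamed by $|b_2|_p\le R_p$, which is equivalent to $|a_1|_p\ge|a_2 b_1+n|_p/R_p$. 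Place-by-place evaluation of the $a_1$-integral then produces a logarithmic weight of order $\log(R_p^2/|a_2 b_1+n|_p)$ (with a discrete analogue at non-archimedean $p$), which is $L^1$ in $(a_2,b_1)$ on the compact region.

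Next I would bound the second sum over $k\in\Z_S^\times$. Writing $\supp F\subseteq A\times B$ with $A,B\subset\Q_S^2$ compact, I have
\[
\int_{\Q_S^2}F(\bsx,k\bsx)\,d\bsx\le\|F\|_\infty\vol_S(A\cap k^{-1}B),
\]
and for each $p\in S$ the set $A_p\cap k^{-1}B_p$ sits inside the ball $\{|\bx_p|_p\le R_p/\max(1,|k|_p)\}$, whence $\vol_S(A\cap k^{-1}B)\le C\prod_{p\in S}\max(1,|k|_p)^{-2}$. Writing $k=\pm\prod_{j=1}^s p_j^{a_j}$ and using the product formula $\sum_{p\in S}\log|k|_p=0$, one computes $\log\prod_{p\in S}\max(1,|k|_p)=\tfrac12\sum_{p\in S}|\log|k|_p|\ge\tfrac12\sum_{j=1}^s|a_j|\log p_j$, so $\prod_{p\in S}\max(1,|k|_p)^{-2}\le\prod_{j=1}^s p_j^{-|a_j|}$, and therefore $\sum_{k\in\Z_S^\times}\prod_{p\in S}\max(1,|k|_p)^{-2}\le 2\prod_{j=1}^s\bigl(1+\tfrac{2}{p_j-1}\bigr)<\infty$.

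The main obstacle I foresee is the second step of the first paragraph, namely establishing the finiteness of $\int|a_1|_S^{-1}F\,da_1\,da_2\,db_1$. The $1/|a_1|_p$ weight is borderline non-integrable, and its tameness depends entirely on the constraint $|a_1|_p\ge|a_2 b_1+n|_p/R_p$ forced by $|b_2|_p\le R_p$. Place-by-place this yields logarithmic factors which, while individually integrable in $(a_2,b_1)$ on the compact domain, require care near the curve $a_2 b_1=-n$ where the weight peaks; the archimedean and non-archimedean cases also demand slightly different analyses, and their product over $p\in S$ has to be controlled uniformly across the finitely many admissible $n$.
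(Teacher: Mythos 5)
Your proof is correct and follows the same skeleton as the paper: invoke the identity of \cref{rank 2 1st form} (valid in $[0,\infty]$ by Tonelli since $F\ge 0$) and verify that each of the two sums on the right-hand side converges. But you diverge from the paper on both halves. For the $n$-sum, you and the paper reach the same reduction to finitely many $n$ (essentially, $\det(\supp F)\cap\Z_S$ is finite because $\Z_S$ is discrete in $\Q_S$ and $\supp F$ is compact), but you then spend most of your effort on a change of variables to show each integral $\int_{\SG_2}F(\sg J_n)\,d\eta_2(\sg)$ is finite, taming a $1/|a_1|_S$ singularity with logarithmic-weight estimates. You flag this as the main obstacle; it is salvageable but entirely unnecessary. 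Right-multiplication by the invertible matrix $J_n$ is a linear automorphism of $\Mat_2(\Q_S)$, so $\sg\mapsto\sg J_n$ is a homeomorphism of $\SG_2$ onto the closed set $\det^{-1}(n)\subset\Mat_2(\Q_S)$; hence $\{\sg\in\SG_2 : \sg J_n\in\supp F\}$ is compact, $F(\sg J_n)$ is bounded with compact support, and the integral against the Radon measure $\eta_2$ is finite with no computation. That observation is what the paper silently relies on. For the $k$-sum, the paper cites \cref{lem:boundedunits}, whose proof runs a multi-case geometric-series analysis over partitions of $S_f$. Your argument---bound the integral by $\prod_{p\in S}\max(1,|k|_p)^{-2}$ and then use the product formula $\sum_{p\in S}\log|k|_p=0$ to identify $\log\prod_p\max(1,|k|_p)$ with $\tfrac12\sum_p\lvert\log|k|_p\rvert\ge\tfrac12\sum_j|a_j|\log p_j$---is a genuine simplification of the paper's lemma, collapsing the case analysis into a single product of geometric series, and it extends verbatim to general $d$ by replacing the exponent $2$ with $d$.
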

To show the result, we will first make use of the following lemma that holds for all $d\geq 2$.
\begin{lem}\label{lem:boundedunits}
	Let $d\geq 2$. Given a nonnegative $F\in B_c^{SC}(\Q_S^d \times \Q_S^d)$
 \[
		\frac 1 {\zeta_S(2)}\sum_{k\in \Z_S^{\times}}\int_{\Q_S^2} F(\bsx,k\bsx)\,d\bsx<\infty.	\]				
\end{lem}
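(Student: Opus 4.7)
The plan is to use the compactness of $\supp F$ together with the product formula for units $k \in \Z_S^\times$ to control the inner integrals, then sum over $k$ via a geometric series at each finite place. (I will treat the integral as taken over $\Q_S^d$, interpreting the exponent ``$2$'' in the displayed formula as a typo for ``$d$''.)

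First, since $F\in B_c^{SC}(\Q_S^d\times \Q_S^d)$ is bounded with compact support, there exist $M<\infty$ and radii $R_p>0$, one for each $p\in S$, such that
\[
\supp F \subseteq \prod_{p\in S}\bigl\{(\bv_p,\bw_p)\in \Q_p^d\times \Q_p^d : \|\bv_p\|_p\le R_p,\ \|\bw_p\|_p\le R_p\bigr\}
\quad\text{and}\quad \|F\|_\infty\le M.
\]
Therefore, for any $k\in \Z_S^\times$,
\[
\int_{\Q_S^d} F(\bsx, k\bsx)\,d\bsx \ \le\ M\cdot \vol_S\bigl\{\bsx\in \Q_S^d : \|x_p\|_p\le R_p,\ |k|_p\|x_p\|_p\le R_p\ \forall p\in S\bigr\}.
\]
For each place $p$, the two constraints combine to $\|x_p\|_p\le R_p/\max(1,|k|_p)$, and the $\vol_p$ volume of such a ball is bounded by a constant (depending only on the norm on $\Q_p^d$) times $R_p^d/\max(1,|k|_p)^d$.

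Next I will sum over $\Z_S^\times$ using the parametrization from \cref{S-arithmetic numbers}\,(1): writing $k=\epsilon\prod_{p\in S_f} p^{a_p}$ with $\epsilon\in\{\pm 1\}$ and $a_p\in \Z$, one has $|k|_\infty=\prod_{p\in S_f} p^{a_p}$ and $|k|_p=p^{-a_p}$. The product formula then yields
\[
\prod_{p\in S}\max(1,|k|_p)\ =\ \prod_{p\in S_f}p^{\max(0,a_p)}\cdot\prod_{p\in S_f}p^{\max(0,-a_p)}\ =\ \prod_{p\in S_f} p^{|a_p|}.
\]
Combining with the volume bound above gives a uniform constant $C>0$ with
\[
\sum_{k\in \Z_S^\times}\int_{\Q_S^d}F(\bsx,k\bsx)\,d\bsx
\ \le\ 2MC\prod_{p\in S}R_p^d\ \sum_{(a_p)\in \Z^s}\ \prod_{p\in S_f} p^{-d|a_p|}
\ =\ 2MC\prod_{p\in S}R_p^d\ \prod_{p\in S_f}\left(1+\frac{2}{p^d-1}\right),
\]
which is finite since $d\ge 2$. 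Dividing by $\zeta_S(2)$ concludes the proof.

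I expect no serious obstacle here: the only subtlety is keeping the bookkeeping at all places in $S$ correct and invoking the product formula to collapse the $|k|_\infty$ factor at the archimedean place with the $|k|_p$ factors at the finite places; once that is done, the sum over $\Z_S^\times$ factors as an absolutely convergent product of geometric-type series.
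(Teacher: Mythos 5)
Your first step (bounding the inner integral by the volume of the intersection of two balls, giving $\int_{\Q_S^d}F(\bsx,k\bsx)\,d\bsx \ll \prod_{p\in S}\max(1,|k|_p)^{-d}$) is sound and is essentially the same estimate the paper derives. However, the key identity you invoke afterwards,
\[
\prod_{p\in S}\max(1,|k|_p) \;=\; \prod_{p\in S_f}p^{|a_p|},
\]
is false, and the error propagates in the wrong direction. Writing $k=\pm k_1/k_2$ with $k_1=\prod_p p^{\max(0,a_p)}$ and $k_2=\prod_p p^{\max(0,-a_p)}$ coprime positive integers, one has $\max(1,|k|_\infty)=\max(k_1,k_2)/k_2$ and $\prod_{p\in S_f}\max(1,|k|_p)=k_2$, so the correct value is $\prod_{p\in S}\max(1,|k|_p)=\max(k_1,k_2)$, whereas $\prod_{p\in S_f}p^{|a_p|}=k_1k_2$. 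These differ whenever neither $k_1$ nor $k_2$ equals $1$ (e.g.\ $S_f=\{2,3\}$, $k=2/3$ gives $3$ vs.\ $6$). Since $\max(k_1,k_2)\le k_1k_2$, your replacement makes each term $\max(k_1,k_2)^{-d}$ smaller, so the chain of inequalities you write does \emph{not} give an upper bound on the sum; it gives a lower bound, and the argument breaks at exactly the step you labeled ``the only subtlety.''

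The gap is fixable with a one-line change: instead of the false equality, use the inequality $\max(k_1,k_2)\ge(k_1k_2)^{1/2}$, which yields $\prod_{p\in S}\max(1,|k|_p)^{-d}\le \prod_{p\in S_f}p^{-(d/2)|a_p|}$. Summing over $(a_p)\in\Z^s$ then gives a finite product $\prod_{p\in S_f}\bigl(1+\tfrac{2}{p^{d/2}-1}\bigr)$, which is finite precisely because $d\ge 2$ forces $d/2\ge 1$. With that correction your route is actually more direct than the paper's: the paper arrives at the same sum $\sum_{k_1,k_2}\max(k_1,k_2)^{-d}$ but controls it by a case analysis over ordered partitions $(P_1,P_2)$ of $S_f$, splitting by the relative sizes of $M_1$ and $M_2$ and using geometric series and the ratio test in each regime. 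Your (repaired) argument replaces that combinatorial analysis with the single elementary inequality $\max\ge\sqrt{\text{product}}$, at the cost of the (already available) hypothesis $d\ge 2$, and is cleaner; but as written the proposal is not correct.
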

\begin{proof}
	For the sake of simplicity, we may assume that $F=f\times f$ (i.e., $F(\bsx, \bsy)={f(\bsx)f(\bsy)}$) for some characteristic function $f$ of $A=\prod_{p\in S} A_p$, where $A_p\subseteq \Q_p^d$ is bounded, since one can always find such a function $f$ and a constant $c>0$ such that $F\le cf\times f$. Denote by $k=k_1/k_2$ if $k>0$ and $k=-k_1/k_2$ if $k<0$ for coprime $k_1,\; k_2\in \N \cap \Z_S^\times$.  Since
\[\begin{gathered}
\int_{\R^d} \one_{A_\infty}(\bv_\infty) \one_{A_\infty}(k\bv_\infty) d\bv_\infty
 \leq \min\left\{1, \frac{1}{|k|_\infty^d}\right\}\vol_\infty(A_\infty)\;\text{and}\\ \int_{\Q_p^d} \one_{A_p}(\bv_p) \one_{A_p}(k\bv_p) d\bv_p
\leq \min\left\{1, \frac{1}{|k|_p^d}\right\} \vol_p(A_p)  {=} |k_2|_p^d \vol_p(A_p),\end{gathered}\]
and since $|k|^d_\infty=k_1^d/ k_2^d$ and $\prod_{p\in S_f} |k_2|_p^d=k_2^{-d}$, it follows that for each $k\in \Z_S^\times$,
\[
\int_{\Q_S^d} f(\bsx)f(k\bsx) d\bsx 
\le \frac 1 {\max(1, |k|_\infty^d)} \vol_\infty(A_\infty)\times \frac{1}{k_2^d} \prod_{p\in S_f} \vol_p(A_p)
=\frac 1 {\max(k_1^d, k_2^d)} \vol_S(A). 
\] 
Hence it suffices to show that
\[
\sum_{\substack{
k_1,k_2\in \N \cap \Z_S^\times\\
\gcd(k_1,k_2)=1}} \frac 1 {\max(k_1, k_2)^d} < \infty
\]
and the bound depends only on the dimension $d$ and the set $S$. 
Let $\mathcal P$ be the collection of ordered pairs $(P_1, P_2)$ of partitions of $S_f$. We allow the cases when $P_1=\emptyset$ or $P_2=\emptyset$.
Then the above summation is bounded by 
\[
\sum_{(P_1,P_2)\in \mathcal P}
\sum_{k_1\in \PP_{P_1}} \sum_{k_2\in \PP_{P_2}} \frac 1 {\max(k_1, k_2)^d},
\]
where we define $\PP_{P}=\{p_{i_1}^{\ell_1}\cdots p_{i_j}^{\ell_j}: \ell_1,\ldots, \ell_j\in \N\cup\{0\}\}$ if $P=\{p_{i_1}, \ldots, p_{i_j}\}$ and $\PP_{\emptyset}=\{1\}$.

If $P_1=\emptyset$ or $P_2=\emptyset$, then the result is given by a product of geometric series
\[
\sum_{k_1\in \PP_{P_1}} \sum_{k_2\in \PP_{P_2}} \frac 1 {\max(k_1, k_2)^d}
= \sum_{\ell_1=0}^\infty \sum_{\ell_2=0}^\infty \cdots \sum_{\ell_s=0}^\infty \frac 1 {p_1^{d\ell_1} \cdots p_s^{d\ell_s}}
=\prod_{p\in S_f} \frac {p^d} {p^d-1} < \infty.
\]

 Assume $P_1\neq \emptyset$ and $P_2\neq \emptyset$. Define $q_1 = \min P_1$ and $q_2 = \min P_2$. Since the summation is symmetric without loss of generality assume $q_1 <q_2$. For $j = 1, 2$ we partition the sets $\mathbb{P}_{P_j}$ by
$$\mathbb{P}_{P_j} = {\bigcup_{M_j = 0}^\infty} \left\{k_j =  \prod_{p \in P_j} p^{\ell_p} : \sum_{p\in P_j} \ell_p =M\\
_j\right\}.$$
Thus for each fixed $M_1$ and $M_2$
\[
\frac 1 {\max(k_1, k_2)^d}
\le \frac{1}{\max( {q_1^{M_1}},  {q_2^{M_2}})^d}
\le \begin{cases}
1/{q_2^{dM_2}} &\text{if } \frac{M_1}{M_2} < \frac{\log(q_2)}{\log(q_1)};\\[0.05in]
 1/{q_1^{dM_1}}&\text{if } \frac{M_1}{M_2} \geq \frac{\log(q_2)}{\log(q_1)}.
\end{cases}
\] 
Let us divide the upper case into $M_1\leq M_2$ and $1<\frac {M_1}{M_2} < \frac{\log q_2} {\log q_1}$. Then we have
\begin{equation} \label{eq:3cases}   
\begin{split}
	\sum_{k_1\in \PP_{P_1}} \sum_{k_2\in \PP_{P_2}} \frac 1 {\max(k_1, k_2)^d}&\leq \sum_{{M_1=0}}^\infty \; \sum_{M_2= M_1}^\infty \frac{1}{q_2^{dM_2}} + \sum_{M_1=1}^\infty \sum_{M_2=  \left\lceil M_1\frac{\log(q_1)}{\log(q_2)}\right\rceil}^{M_1-1} \frac{1}{q_2^{dM_2}}\\
	&\hspace{.25in} + \sum_{M_1 =1}^\infty \sum_{M_2=0}^{M_1 \left\lceil \frac{\log(q_1)}{\log(q_2)}\right\rceil -1} \frac{1}{q_1^{dM_1}}.
\end{split}
\end{equation}
In the first part of the sum of \eqref{eq:3cases}, we use geometric series and the fact that $q_2 \geq 2$ to get
$$\sum_{M_1=0}^\infty \sum_{M_2= M_1}^\infty \frac{1}{q_2^{d{M_2}}} = \sum_{M_1=0}^\infty \frac{1}{(q_2^d)^{M_1} (1 - q_2^{-d})} \leq \sum_{M_1=0}^\infty \frac{2}{(q_2^d)^{M_1}} = \frac{2 q_2^d}{q_2^d-1}.$$
Similarly in the second part of \eqref{eq:3cases} we use the finite geometric series, the fact that $q_2 \geq 2$ and the ratio test to get
$$\sum_{M_1=1}^\infty \sum_{M_2= M_1 \left\lceil \frac{\log(q_1)}{\log(q_2)}\right\rceil}^{M_1-1} \frac{1}{q_2^{d M_2}} \leq 2  \sum_{M_1=1}^\infty \frac{q_2^{d M_1\left(1 -\left\lceil \frac{\log(q_1)}{\log(q_2)}\right\rceil\right)} -1}{q_2^{M_1d}}< \infty.$$
In the third part of of \eqref{eq:3cases} we have by the ratio test
$$\sum_{M_1 =1}^\infty \sum_{M_2=0}^{M_1 \left\lceil \frac{\log(q_1)}{\log(q_2)}\right\rceil -1} \frac{1}{q_1^{dM_1}} = \left\lceil \frac{\log(q_1)}{\log(q_2)}\right\rceil\sum_{M_1=1}^\infty \frac{M_1 }{q_1^{dM_1}} <\infty.$$

Therefore this shows the lemma, where we note all these bounds are depending only on $d$ and the set $S_f$.
\end{proof}

\begin{proof}[Proof of \cref{thm: integrability for dim2}]
By the proof of \cref{rank 2 1st form}, it suffices to show that in addition to \cref{lem:boundedunits}
\begin{align}
&\sum_{n\in \Z_S-\{0\}} \frac{\varphi(\rd(n))}{\zeta_S(2)} \int_{\SG_2} F(\sg J_n) \, d\eta_2(\sg)<\infty.\label{eq 1:finiteness dim2}
\end{align}

For \cref{eq 1:finiteness dim2}, note that the function $\det:\Q_S^2\times \Q_S^2\rightarrow \Q_S$ given by
\[
\det(\bsx, \bsy):=\left(\det(\bx_p, \by_p)\right)_{p\in S}
\]
is continuous so that $\det(\supp F)\cap \Z_S$ is finite, since $\Z_S\subset \Q_S$ is discrete and we assume that $F$ is compactly supported. Hence, the sum in \cref{eq 1:finiteness dim2} is a finite sum of finite integrals.
\end{proof}

\section{Integral formulas over $C_S$}\label{Integral formulas over Cone}

Now, let us show two integral formulas over the cone $C_S$ in Proposition~\ref{integral over the cone}. Recall that the cone $C_S=C_{S, \mathcal F}\simeq \mathcal F\times \Interval_1$, {for the fundamental domain $\mathcal{F}$ of $\SG_2/\Gamma_2$, is} defined as in \cref{def of cone}.
More generally, we will consider the cone which is parameterized by $\mathcal F\times \Interval_n$ for $n\in \Z_S-\{0\}$ in the similar way as in \cref{def of cone}, where 
\[
\Interval_n = n(0,1]\times \prod_{p\in S_f}n (1+L_p\Z_p).
\]

\begin{proof}[Proof of Proposition~\ref{integral over the cone} (1)]

We now deduce the formula from \cref{prop:primmean} and the change of variables. For each $\sv\in I_1$, set $f_{\sv}(\bsx):=f(\sv^{1/2}\bsx)$. Using Fubini's theorem,
\[\begin{split}
\int_{C_S} \rd(\sv) \widehat f \left( \sv^{1/2} \sg\Z_S^2\right) d\mu_2(\sg)d\sv
&=\int_{\Interval_1} \rd(\sv) \int_{\mathcal F} \widehat f_{\sv} (\sg\Z_S^2) d\mu_2(\sg)d\sv
=\int_{\Interval_1} \rd(\sv) \frac 1 {\zeta_S(2)} \int_{\Q_S^2} f(\sv^{1/2} \bsx) d\bsx d\sv\\
&=\int_{\Interval_1} \rd(\sv) \frac 1 {\zeta_S(2)} \int_{\Q_S^2} f(\bsx) \frac {d\bsx} {\rd(\sv)} d\sv
=\frac 1 {L_S\zeta_S(2)} \int_{\Q_S^2} f(\bsx) d\bsx.
\end{split}\]
Moreover, since the right hand side is integrable, this shows that the map $(\sv, \sg)\mapsto \rd(\sv)\widehat f \left(\sv^{1/2}\sg\Z_S^2\right)$ is in $L^1(C_S)$.
\end{proof}

For the second statement of the proposition, we first prove the integral formula regardless of finiteness and then obtain integrability by showing that the right hand side of the formula is finite, as in the proof of \cref{rank 2 1st form}. For this proof recall \cref{def:phisummation}.

\begin{proof}[Proof of \cref{integral over the cone} (2) (integral formula)]
As in the proof of \cref{integral over the cone} (1), we may assume that $F\in SC(\Q_S^2\times \Q_S^2)$ is non-negative.
For each $\sv\in \Interval_1$, define $F_\sv(\bsx, \bsy)=F(\sv^{1/2}\bsx, \sv^{1/2}\bsy)$. By Theorem~\ref{rank 2 1st form},
\[\begin{split}
&\int_{C_S} \rd(\sv)^2 \widehat F \left(\sv^{1/2}\sg\Z_S^2\right) \rd\mu_2(\sg) d\sv
=\int_{\Interval_1}\rd(\sv)^2 \int_{\mathcal F} \widehat {F_\sv}(\sg\Z_S^2) d\mu_2(\sg) d\sv\\
&=\int_{\Interval_1} \rd(\sv)^2
\left(\sum_{n\in \Z_S-\{0\}} \frac {\varphi(\rd(n))} {\zeta_S(2)}
\int_{\SG_2} F_\sv(\sg J_n) d\eta_2(\sg)+
\frac 1 {\zeta_S(2)} \sum_{k\in \Z_S^\times} \int_{\Q_S^2} F_\sv(\bsx, k\bsx)d\bsx
\right) d\sv.
\end{split}\]

First, let us compute the first part of the sum corresponding to full rank matrices. 
For each $\sv\in \Interval_1$ and $n\in \Z_S-\{0\}$, consider the change of variables $\sg'=\sg\sh_{\sv}^{-1}$, where $\sh_{\sv}=\bpm
\sv^{-1/2} & 0 \\
0 & \sv^{1/2} \epm$,
\[
\sv^{1/2}\sg J_n
=\sg' \sv^{1/2} \sh_{\sv} J_n
=\sg' \bpm
1 & 0 \\
0 & \sv n \epm.
\]
Using that $\eta_2$ is a Haar measure of $\SG_2$, we have
\begin{equation}\begin{split} \label{eq 1: integral over the cone (2)}
&\sum_{{n\in}\Z_S-\{0\}} \frac {\varphi(\rd(n))} {\zeta_S(2)}
\int_{\Interval_1} \rd(\sv)^2 \int_{\SG_2} F_{\sv} (\sg J_n)d\eta_2(\sg) d\sv\\
&=\sum_{{n\in}\Z_S-\{0\}} \frac {\varphi(\rd(n))} {\zeta_S(2)}
\int_{\Interval_1} \rd(\sv)^2 \int_{\SG_2} F\left(\sg \bpm
1 & 0 \\
0 & \sv n\epm\right) d\eta_2(\sg) d\sv.
\end{split} \end{equation}
Set $\sx=\sv n$ so that $d\sx=\rd(n)d\sv$ and $\rd(\sv)=\frac 1 {\rd(n)} \rd(\sx)$. Hence
\[
\eqref{eq 1: integral over the cone (2)} =\frac 1 {\zeta_S(2)}\sum_{{n\in}\Z_S-\{0\}} \frac {\varphi(\rd(n))} {\rd(n)^3}
\int_{\Interval_n}\rd(\sx)^2 \int_{\SG_2} F\left(\sg \bpm
1 & 0 \\
0 & \sx \epm\right) d\eta_2(\sg) d\sx.
 \]
 
 Now, we want to rearrange the above integral using Tonelli's theorem: First, we observe that for a given $\sx \in \prod_{p\in S} (\Q_p-\{0\})$,
\[
n\in \Z_S-\{0\} : \sx=(x_p)_{p\in S} \in \Interval_n
\;\Leftrightarrow\; 
x_\infty \in \left\{\begin{array}{cl}
(0, n], &\text{if } x_\infty>0;\\
{[n, 0)}, &\text{if } x_\infty <0,
\end{array}\right. 
\quad\text{and}\quad
x_p \equiv n \;\mod\; |n|_p^{-1}L_p{\Z_p}. 
\]
In particular, $|n|_p=|x_p|_p$ for $p\in S_f$.
Put $n=m p_1^{k_1}\cdots p_s^{k_s}$ if $x_\infty>0$ and $n=-mp_1^{k_1}\cdots p_s^{k_s}$ if $x_\infty<0$, where $p_i^{-k_i}=|x_{p_i}|_{p_i}$ for $1\le i \le s$ are fixed. Then the above is equivalent to the condition that
\[
m\ge |x_\infty|_\infty p_1^{-k_1}\cdots p_s^{-k_s}=\rd(\sx)
\quad\text{and}\quad
\pm\: m\equiv x_p\cdot \prod_{p\in S_f} |x_p|_p \;\mod L_p{\Z_p},\; p\in S_f	
\]
which is described as 
in \cref{def:phisummation}. 
Thus by Tonelli,
\[
\eqref{eq 1: integral over the cone (2)} =\frac 1 {\zeta_S(2)} \int_{\prod_{p\in S}   \Q_p - \{0\}} \rd(\sx)\Phi_S(\sx) \int_{\SG_2} F\left(\sg \bpm
1 & 0 \\
0 & \sx \epm\right) d\eta_2(\sg) d\sx.
 \]

Put
\[
\sg\bpm
1 & 0 \\
0 & \sx \epm
=\bpm
1 & 0\\
\sc & 1 \epm\bpm
\sa & \sb\\
0 & \sa^{-1} \epm\bpm
1 & 0\\
0 & \sx \epm
 =\bpm
\sa & \sb\sx\\
\sc\sa & (\sc\sb+\sa^{-1})\sx \epm
 =\left(\bsx, \bsy\right).
\]
{The Jacobian of the change of coordinates in each place $p\in S$ is $|\sx|_p$, thus it follows that}
$$d\eta_2(\sg)d\sx= d\sa\, d\sb\, d\sc\, d \sx =\frac 1 {\rd(\det(\bsx, \bsy))} d\bsx d \bsy,$$
where we recall $\det(\bsx, \bsy)=(\det(\bx_p, \by_p))_{p\in S}$. Hence 
\[
\eqref{eq 1: integral over the cone (2)} = \frac 1 {\zeta_S(2)} \int_{(\Q_S^2)^2}
\Phi_S\left(\det(\bsx, \bsy)\right) F(\bsx, \bsy) d\bsx d\bsy.
\]

We compute that the linearly dependent part is
\[\begin{split}
&\frac 1 {\zeta_S(2)} \int_{\Interval_1} \rd(\sv)^2 \sum_{k\in \Z_S^\times} \int_{\Q_S^2} F_{\sv} (\bsx, k\bsx) d\bsx d\sv
=\frac 1 {\zeta_S(2)} \sum_{k\in \Z_S^\times} \int_{\Interval_1} \rd(\sv)^2 \int_{\Q_S^2} F (\bsx, k\bsx) d\bsx \frac {d\sv} {\rd(\sv)}\\
&=\frac 1 {\zeta_S(2)} \sum_{k\in \Z_S^\times} \int_{\Interval_1} \rd(\sv) \int_{\Q_S^2} F(\bsx, k\bsx) 
 d\bsx d\sv
=\frac 1 {2L_S\zeta_S(2)}\sum_{k\in \Z_S^\times} \int_{\Q_S^2} F(\bsx, k\bsx) d\bsx.
\end{split}\]
Therefore we obtain the integral formula in Proposition~\ref{integral over the cone} (2).
\end{proof}

To show the integrability of \cref{integral over the cone} (2), the integral formula and \cref{lem:boundedunits} imply that it suffices to show 
\begin{equation}\label{eq 2: integral over the cone (2)}
\frac 1 {\zeta_S(2)} \int_{(\Q_S^2)^2}
\Phi_S\left(\det(\bsx, \bsy)\right) F(\bsx, \bsy) d\bsx d\bsy <\infty
\end{equation}
for a non-negative function $F\in SC(\Q_S^2\times \Q_S^2)$.
For this, we need some observations about the function
$\Phi_S$. 

Notice that for each $\sx\in \prod_{p\in S} (\Q_p-\{0\})$, there is $m_0\in\{1, \ldots, L_S -1\}$ and $\gcd(m_0, L_S)=1$ so that 
\begin{equation}\label{eq:m_0}
m\equiv x_p\Big(\prod_{p\in S_f} |x_p|_p\Big) \;\mod L_p,\; p\in S_f
\;\Leftrightarrow \;
m \equiv m_0\;\mod L_S
\end{equation}
by Sun Tzu's theorem, historically known as the Chinese remainder theorem.

Let us first show the analog of the asymptotic expansion for Euler totient summatory function \cite[Theorem 3.4]{TenenbaumIntro}, which states that 
$$\sum_{1\leq m \leq N} \varphi(m) = \frac{1}{\zeta(2)} \frac{N^2}{2} + O(N\log N),$$
where we recall $\varphi$ is the Euler totient function.

Let $\mu(\cdot)$ be the Mobius function. From the fact that
\[
\zeta_S(d)=\sum_{m\in \N_S} \frac 1 {m^d}=\prod_{\substack{q:\text{ prime}\\
\gcd(q,p_1\cdots p_s)=1}}
{\left( 1-\frac 1 {q^d}\right)^{-1}},
\]
where $d\ge 2$, using the classical properties of $\mu$ and $\zeta$, we can deduce
\begin{equation}\label{eq:summu}
\sum_{m\in \N_S} \frac {\mu(m)} {m^d} = \frac 1 {\zeta_S(d)}.
\end{equation}

\begin{lem}\label{Euler ftn-sum formula}
Let $m_0\in \N_S$ for which $1\le m_0 \le L_S-1$. For any $N\in \R_{>0}$, we have
\begin{equation*}
\sum_{\substack{
1\le m \le N \\
m\equiv m_0 \;\mod L_S}} \varphi(m)
=\frac 1 {L_S\zeta_S(2)} \frac {N^2} 2 + O_{L_S}(N\log N).
\end{equation*}
\end{lem}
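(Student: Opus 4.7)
The plan is to adapt the classical proof of the Euler totient summatory estimate (as in \cite[Theorem 3.4]{TenenbaumIntro}) by using the Möbius convolution $\varphi(m)=\sum_{d\mid m}\mu(d)(m/d)$, interchanging the order of summation, and handling the congruence condition $m\equiv m_0\!\mod L_S$ inside the inner sum over an arithmetic progression.

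First I would write
\[
\sum_{\substack{1\le m\le N\\ m\equiv m_0\,\mathrm{mod}\,L_S}} \varphi(m)
=\sum_{\substack{d,k\ge 1\\ dk\le N\\ dk\equiv m_0\,\mathrm{mod}\,L_S}} \mu(d)\,k.
\]
Since $\gcd(m_0,L_S)=1$, the congruence $dk\equiv m_0\!\mod L_S$ forces $\gcd(d,L_S)=\gcd(k,L_S)=1$, so effectively $d,k\in\N_S$. For fixed $d\in\N_S$ with $d\le N$, let $m_0'\in\{1,\dots,L_S\}$ be the reduction of $d^{-1}m_0$ modulo $L_S$; then the inner sum is a sum of an arithmetic progression of common difference $L_S$ of length $\lfloor(N/d-m_0')/L_S\rfloor+1=N/(dL_S)+O(1)$. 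A direct computation yields
\[
\sum_{\substack{1\le k\le N/d\\ k\equiv m_0'\,\mathrm{mod}\,L_S}} k
=\frac{N^2}{2L_S\,d^2}+O_{L_S}\!\left(\frac{N}{d}\right).
\]

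Next I would sum this over $d\in\N_S$, $d\le N$:
\[
\sum_{\substack{1\le m\le N\\ m\equiv m_0\,\mathrm{mod}\,L_S}} \varphi(m)
=\frac{N^2}{2L_S}\sum_{\substack{d\in\N_S\\ d\le N}}\frac{\mu(d)}{d^2}
+O_{L_S}\!\left(N\sum_{d\le N}\frac{1}{d}\right).
\]
The second sum is $O(N\log N)$. For the main term I extend the partial sum to the full series $\sum_{d\in\N_S}\mu(d)/d^2$, which equals $1/\zeta_S(2)$ by \eqref{eq:summu}, incurring a tail error of size $O(N^2/N)=O(N)$. Putting these together gives the claimed
\[
\frac{1}{L_S\zeta_S(2)}\frac{N^2}{2}+O_{L_S}(N\log N).
\]

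There is no single hard step; the only delicate point is to verify that the congruence condition $dk\equiv m_0\!\mod L_S$ truly forces both $d$ and $k$ to lie in $\N_S$, so that the resulting series over $d$ matches the factorization of $1/\zeta_S(2)$ appearing in \eqref{eq:summu}. Once this observation is in place, the rest is routine bookkeeping of the arithmetic-progression error and the tail of a convergent series, both implicit constants depending on $L_S$ (i.e.\ on $S$) but not on $m_0$ or $N$.
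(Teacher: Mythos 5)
Your proof is correct and follows essentially the same route as the paper's: Möbius inversion $\varphi(m)=\sum_{d\mid m}\mu(d)(m/d)$, an exact evaluation (up to $O(N/d)$) of the inner arithmetic-progression sum, summation over $d\in\N_S$, and extension of $\sum_{d\le N}\mu(d)/d^2$ to the full series $1/\zeta_S(2)$ via the tail bound. The paper phrases the inner sum with an explicit floor function and representative $m_d\in\{1,\dots,L_S-1\}$ before approximating, but the bookkeeping and error bounds are the same as yours.
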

\begin{proof}
It is well-known that for each $m\in \N$, $\varphi(m)=m\sum_{d|m} \mu(d)/d$. By putting $d'=m/d$, since $m_0\in \N_S$, 
\[\begin{split}
\sum_{\substack{
1\le m \le N \\
m\equiv m_0 \;\mod L_S}}
\varphi(m)
=\sum_{\substack{
1\le m \le N \\
m\equiv m_0 \;\mod L_S}}
m\sum_{d|m} \frac {\mu(d)}{d}
=\sum_{\substack{
1\le d\le N\\
d\in \N_S}} \mu(d)
\sum_{\substack{
1\le d' \le N/d\\
d'\in \N_S\\
dd'\equiv m_0\; \mod L_S}} d'.
\end{split}\]

Denote by $m_d$ the unique integer in $\{1, \ldots, L_S -1\}$ for which $d m_d\equiv m_0 \; \mod L_S$.
Let $d'=m_d+L_S(k'-1)$. Since $1\le d'=m_d+L_S(k'-1)\le N/d$, the range of $k'$ is $1\le k' \le N/(d L_S)- m_d/L_S +1$, so that 
\[\begin{split}
\sum_{\substack{
1\le d'\le N/d\\
d'\equiv m_d\;\mod {L_S}}} d'
&=\sum_{k'=1}^{\lfloor \frac{N}{d L_S} - \frac {m_d} {L_S} +1 \rfloor}
(m_d+{L_S} (k'-1))\\
&=m_d\left\lfloor \frac{N }{d L_S} - \frac {m_0(d)}{L_S} +1 \right\rfloor
+\frac{L_S}{ 2} \left(\left\lfloor \frac N {d L_S} - \frac {m_d} {L_S} +1 \right\rfloor^2+\left\lfloor \frac N {d L_S} - \frac {m_d}{ L_S} +1 \right\rfloor\right).
\end{split}\]

Since we want to compute the summation of $\varphi(m)$ up to the error bound $O_{L_S}(N\log N)$, for computational simplicity, we replace $\lfloor N/(d L_S) - m_d/L_S+1 \rfloor$ with $N/(d L_S)$. Equivalently, one can proceed by taking an upper bound $N/(d L_S)+1$ and a lower bound $N/(d L_S)$ and reach the same conclusion.

Hence now our claim is that
\[
\sum_{\substack{
1\le d\le N \\
d\in \N_S}} \mu(d) \left(m_d\frac {N} {d L_S} + \frac {L_S} 2 \left( \frac {N} {d L_S}\right)^2 + \frac {L_S} 2 \left( \frac N {d L_S}\right)\right)= \frac 1 {L_S\zeta_S(2)} \frac {N^2} 2 +O_{L_S}(N\log N)
\]
and we have 3 remaining estimates to conclude the proof.
First,
\[\begin{split}
\Bigg|\sum_{\substack{
1\le d \le N\\
d\in \N_S}}
\mu(d)\cdot m_d\frac N {d L_S}\Bigg|
\le \sum_{1\le d \le N} L_S \frac N {d L_S}
= N\sum_{1\le d \le N} \frac 1 d = O(N\log N).
\end{split}\]
Next, using \cref{eq:summu}
\[\begin{split}
\sum_{\substack{
1\le d \le N\\
d\in \N_S}}
\mu(d)\cdot\frac{ L_S}{ 2} \left(\frac N {d L_S}\right)^2
&=\frac {N^2} {2L_S} \sum_{\substack{
1\le d \le N\\
d\in \N_S}} \frac {\mu(d)} {d^2}
=\frac {N^2} {2L_S} \sum_{
d\in \N_S} \frac {\mu(d)} {d^2}
-\frac {N^2} {2L_S} \sum_{\substack{
d\in \N_S\\
d>N}} \frac {\mu(d)} {d^2}\\
&=\frac {N^2} {2L_S} \frac 1 {\zeta_S(2)}
+O_{L_S}\left(N^2\sum_{d=N+1}^\infty \frac 1 {d^2}\right)
=\frac {N^2} {2L_S} \frac 1 {\zeta_S(2)}
+O_{L_S}\left(N\right).
\end{split}\]
Finally,
\[
\left|\sum_{\substack{
1\le d \le N\\
d\in \N_S}}
\mu(d)\cdot\frac{ L_S} 2\cdot \frac N {d L_S}\right|
\le \frac N 2 \sum_{1\le d \le N} \frac 1 d
=O(N\log N). \]

Therefore the lemma follows.
\end{proof}

Before stating a corollary, let us recall Abel's summation formula (\cite[Theorem 0.3]{TenenbaumIntro}). Let $(a_n)_{n=0}^\infty$ be a sequence of complex numbers and let $A(t):=\sum_{0\le n\le t} a_n$, where $t\in \R$. For $N_1<N_2\in \R$ and $\phi\in C^1([N_1,N_2])$, 
\[
\sum_{N_1<n\le N_2} a_n\phi(n)=A(N_2)\phi(N_2)-A(N_1)\phi(N_1)-\int_{N_1}^{N_2} A(u)\phi'(u)du.
\]
\begin{coro}\label{application of Abel summation formula} 
{There is $r_0>0$ such that  
\begin{enumerate}
\item $\Phi_S(\sx)$ is uniformly bounded on $\sx\in \Q_S$ for which $\rd(\sx)\le r_0$;
\item For almost all $\sx\in \Q_S$ with $\rd(\sx) > r_0$, it holds that
\[\Phi_S(\sx)=\frac 1 {L_S \zeta_S(2)} + O_{L_S}\left(\rd(\sx)^{-1} \log \rd(\sx)\right).\]
\end{enumerate}
}
\end{coro}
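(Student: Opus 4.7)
The plan is to evaluate both claims by analyzing the Dirichlet-type series
\[
\Sigma(\sx) \defeq \sum_{m\in \N_\sx} \frac{\varphi(m)}{m^3},
\]
so that $\Phi_S(\sx)=\rd(\sx)\,\Sigma(\sx)$ on $\prod_{p\in S}(\Q_p-\{0\})$. The first step is to repackage $\N_\sx$ via the Chinese remainder identity \eqref{eq:m_0}: there is a unique residue $m_0=m_0(\sx)\in\{1,\ldots,L_S-1\}$, automatically coprime to $L_S$, for which the mod-$L_p$ constraints in \cref{def:phisummation} collapse into the single condition $m \equiv m_0 \pmod{L_S}$. Since each $L_p$ is a power of $p$, this congruence already forces $\gcd(m,p)=1$ for every $p\in S_f$, so membership in $\N_S$ is automatic and
\[
\N_\sx = \bigl\{m\in \N : m \ge \rd(\sx),\; m\equiv m_0 \pmod{L_S}\bigr\}.
\]

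For part (1), the trivial estimate $\varphi(m)\le m$ gives $\Sigma(\sx)\le \sum_{m\ge 1} m^{-2}=\zeta(2)$, hence $\Phi_S(\sx)\le \zeta(2)\,\rd(\sx)\le \zeta(2) r_0$ whenever $\rd(\sx)\le r_0$. This bound is uniform in $\sx$ and valid for any choice of $r_0>0$.

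For part (2), I apply the Abel summation formula (recalled just before \cref{Euler ftn-sum formula}) to $\Sigma(\sx)$ with weights $a_n=\varphi(n)\one[n\equiv m_0 \pmod{L_S}]$ and smooth factor $\phi(u)=u^{-3}$. Setting $A(t)=\sum_{n\le t} a_n$, \cref{Euler ftn-sum formula} gives $A(t)=\tfrac{t^2}{2L_S\zeta_S(2)}+O_{L_S}(t\log t)$; since $A(N_2)\phi(N_2)=O(N_2^{-1})\to 0$, taking $N=\lceil\rd(\sx)\rceil$ the summation formula reduces to
\[
\Sigma(\sx)=-\frac{A(N)}{N^3}+3\int_N^\infty \frac{A(u)}{u^4}\,du.
\]
Substituting the asymptotic for $A$, the boundary term contributes $-\tfrac{1}{2L_S\zeta_S(2)N}+O_{L_S}(\log N/N^2)$ and the integral contributes $\tfrac{3}{2L_S\zeta_S(2)N}+O_{L_S}(\log N/N^2)$, the error in the integral coming from $\int_N^\infty u^{-3}\log u\,du=O(\log N/N^2)$. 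Adding these and multiplying by $\rd(\sx)$ yields
\[
\Phi_S(\sx)=\frac{1}{L_S\zeta_S(2)}+O_{L_S}\!\left(\frac{\log\rd(\sx)}{\rd(\sx)}\right),
\]
as claimed.

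The only real obstacle is a minor boundary-term bookkeeping: replacing $\rd(\sx)$ by $N=\lceil\rd(\sx)\rceil$ shifts the sum by at most one extra term of size $O(\rd(\sx)^{-2})$, which is absorbed in the displayed error. Any $r_0>1$ then suffices for part (2), and the argument actually produces a pointwise expansion at every $\sx$ with $\rd(\sx)>r_0$; the ``almost all'' caveat is just the measure-zero set $\{\sx : x_p=0\text{ for some }p\in S\}$ on which $\rd(\sx)=0$ and the expansion is vacuous.
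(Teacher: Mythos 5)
Your proof is correct and follows essentially the same approach as the paper: both rely on \cref{Euler ftn-sum formula} together with Abel summation after collapsing $\N_\sx$ into a single residue class mod $L_S$ via \eqref{eq:m_0}. The only cosmetic difference is that the paper reparametrizes the sum as $m = m_0 + nL_S$ before applying Abel summation and excludes the measure-zero set $\{(\rd(\sx)-m_0)/L_S\in\Z\}$ to avoid boundary bookkeeping, whereas you sum directly over $m$ with $N=\lceil\rd(\sx)\rceil$ and absorb the off-by-one term into the error.
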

\begin{proof}
Let $\sx\in \prod_{p\in S} (\Q_p-\{0\})$ be an $S$-arithmetic number such that $(\rd(\sx)-m_0)/{L_S}\notin \Z$, where $m_0=m_0(\sx)$ is defined as in \cref{eq:m_0}. The set of such $\sx\in \Q_S$ has full measure since it contains $(\R-\Q)\times \prod_{p\in S_f} {(\Q_p-\{0\})}$.
When $\sd(\sx)\leq r_0$, notice that ${\Phi_S(\sx)} \leq r_0 \sum_{m=1}^\infty \frac{\varphi(m)}{m^3} <\infty$ since the Dirichlet series $\sum \varphi(m)/m^z$ converges for $\Re(z)>2$.

{Now, we may assume that $\rd(\sx)>L_S$.
It follows that}
\[
\frac 1 {\rd(\sx)}\Phi_S(\sx)=\sum_{\substack{
m\ge \rd(\sx)\\
m\equiv m_0\;\mod L_S }} \frac {\varphi(m)} {m^3}
=\sum_{n\ge \frac {\rd(\sx)-m_0} {L_S}}
\frac {\varphi(m_0+n L_S)} {(m_0+nL_S)^3}.
\]

Put
\[a_n=\varphi(m_0+nL_S)
\quad\text{and}\quad
\phi(n)=(m_0+nL_S)^{-3}.
\]
We set $N_1:=(\rd(\sx)-m_0)/L_S$ and we will let $N_2\rightarrow \infty$. Abel's summation formula gives
\begin{align}
\sum_{n=N_1}^{N_2} \frac {\varphi(m_0+nL_S)} {(m_0+nL_S)^3}
&=\frac 1 {(m_0+N_2L_S)^3}\sum_{0\le n\le N_2} \varphi(m_0+nL_S) \label{eq:abel1}\\
&-\frac 1 {(m_0+N_1L_S)^3}\sum_{0\le n\le N_1} \varphi(m_0+nL_S)\label{eq:abel2}\\
&+3L_S\int_{N_1}^{N_2} \frac 1 {(m_0+uL_S)^4}\sum_{n=1}^u \varphi(m_0+nL_S) du.\label{eq:abel3}
\end{align}
We see immediately that the right hand side of \eqref{eq:abel1} disappears as $N_2\to\infty$. Using Lemma~\ref{Euler ftn-sum formula},
\[\begin{split}
\eqref{eq:abel2}&=-\frac 1 {(m_0+N_1L_S)^3}\sum_{0\le n\le N_1} \varphi(m_0+nL_S)\\
&=-\frac 1 {(m_0+N_1L_S)^3}\left(\frac {(m_0+N_1L_S)^2}
{2L_S\zeta_S(2)} + O_{L_S}\left((m_0+N_1L_S)\log(m_0+N_1L_S)\right)\right)\\
&=-\frac 1 {\rd(\sx)}\cdot \frac 1 {2L_S \zeta_S(2)}+ O_{L_S}\left(\rd(\sx)^{-2}\log \rd(\sx)\right).
\end{split}\]
We now consider 
\[\begin{split}
\eqref{eq:abel3}&=3L_S\int_{N_1}^{N_2} \frac 1 {(m_0+uL_S)^4}\sum_{n=1}^u \varphi(m_0+nL_S) du\\
&=3L_S\int_{N_1}^{N_2}\frac 1 {(m_0+uL_S)^4}
\left(\frac {(m_0+uL_S)^2} {2L_S\zeta_S(2)} + O_{L_S}\left((m_0+uL_S)\log(m_0+uL_S)\right)\right) du\\
&=\frac 3 {2\zeta_S(2)}\int_{N_1}^{N_2}\frac {du} {(m_0+uL_S)^2}
+3L\int_{N_1}^{N_2} O_L\left(\frac {\log(m_0+uL_S)} {(m_0+uL_S)^3}\right) du\\
&=\frac 3 {2L_S\zeta_S(2)}\cdot\frac 1 {\rd(\sx)}+O_{L_S}\left(\rd(\sx)^{-2}\log \rd(\sx)\right).
\end{split}\]
Multiplying by $\rd(\sx)$, we obtain the formula.
\end{proof}

\begin{proof}[Proof of Proposition~\ref{integral over the cone} (2): integrability]
As mentioned before, it suffices to show that the integral in \eqref{eq 2: integral over the cone (2)} is finite. 
Without loss of generality, let us assume that $F$ is a non-negative, bounded and compactly supported function. 
Recall that the map $$(\bsx, \bsy)\in \Q_S^2\times \Q_S^2\mapsto \det(\bsx, \bsy)=(\det(\bx_p, \by_p))_{p\in S}\in \Q_S \mapsto \Phi_S(\rd(\det(\bsx, \bsy)))$$
is uniformly bounded when $\rd(\det(\bsx, \bsy)))\le r_0$, where $r_0>0$ is given as in Corollary~\ref{application of Abel summation formula}. 
Together with the fact that $F$ is compactly supported, integrability is determined by the second part of   Corollary~\ref{application of Abel summation formula}. That is, there is some $C>0$ so that
\begin{align}
\frac 1 {\zeta_S(2)} \int_{(\Q_S^2)^2}
\Phi_S\left(\det(\bsx, \bsy)\right)& F(\bsx, \bsy) d\bsx d\bsy 
{\leq} C+ \frac 1 {L_S \zeta_S(2)^2} \int_{(\Q_S^2)^2} F(\bsx, \bsy) d\bsx d\bsy \nonumber\\ 
&\hspace{-0.8in}+ \frac 1 {\zeta_S(2)} \int_{\{(\bsx,\bsy)\in (\Q_S^2)^2: \rd(\det(\bsx, \bsy)) > r_0\}} F(\bsx, \bsy)O_{L_S}\left( {\rd(\det(\bsx, \bsy))^{-1}}\log \rd(\det(\bsx,\bsy))\right) d\bsx d\bsy. \label{eq:cone2}
\end{align}

Since the first integral of (R.H.S) is finite, let us focus on the second term 
\eqref{eq:cone2}. By the change of variables ${(\bsx, \bsy)}=\sg \left(\begin{array}{cc}
1 & 0 \\
0 & \sx\end{array}\right)$, 
\[\begin{split}
\eqref{eq:cone2}\ll_{L_S}\int_{\{\sx\in \Q_S: \rd(\sx)>r_0\}} \int_{\sg\in \SG_2}
F\left(\sg\left(\begin{array}{cc}
1 & 0 \\
0 & \sx \end{array}\right)\right) {\log \rd(\sx)} d\eta_2(\sg) d\sx.
\end{split}\]
Since $F$ is compactly supported, $\{\sx=\det(\bsx, \bsy)\in \Q_S: F(\bsx, \bsy)\neq 0\}\subseteq (-b_\infty, b_\infty)\times \prod_{p\in S_f} p^{-b_p}\Z_p$ for some $b_\infty>0$ and $b_p\in \N$ ($p\in S_f$). Then
{
\[\begin{split}
\eqref{eq:cone2}&\ll_{L_S,F} \int_{\{\sx\in (-b_\infty, b_\infty)\times \prod_{p\in S_f} p^{-b_p}\Z_p: \rd(\sx)>r_0\}} \log \left(|x_\infty|_\infty \prod_{p\in S_f} |x_p|_p\right) \prod_{p\in S_f} d x_p \cdot d x_\infty\\
&=2\sum_{(k_p)_{p\in S_f}} \prod_{p\in S_f} p^{k_p}\left(1-\frac 1 p\right)\int_{r_0\prod_{p\in S_f} p^{-k_p}}^{b_\infty} \log\left(|x_\infty|_\infty \prod_{p\in S_f} p^{k_p} \right) dx_\infty\\
&=2\sum_{(k_p)_{p\in S_f}} \prod_{p\in S_f} \left(1 -\frac 1 p \right) \int_{r_0}^{b_\infty\prod_{p\in S_f} p^{k_p}} \log |x'_\infty|_\infty d x'_\infty,
\end{split}\]
where we change the variables $x'_\infty$ to $x'_\infty=x_\infty \prod_{p\in S_f} p^{k_p}$ in each summand.
Moreover, the range of $(k_p)_{p\in S_f}$ for the summations above is
\[
(k_p)_{p\in S_f}\in \Z^s :\quad k_p \le b_p \;\text{for each }p\in S_f
\quad\text{and}\quad \prod_{p\in S_f} p^{k_p} > \frac {r_0}{b_\infty}.
\]
Since the number of such $(k_p)_{p\in S_f}$ is finite, one can conclude that \eqref{eq:cone2} is finite.
}
\end{proof}

\section{Applications}\label{sec:applicationproofs}
 We present the proof of the three applications: Error terms in \cref{sec:errortermsproofs}, Khinthine--Groshev Theorems in \cref{sec:KGproofs}, and Logarithm Laws in \cref{sec:loglawsproofs}.

\subsection{Error Terms}\label{sec:errortermsproofs}
This section concludes with the main goal of proving \cref{Schmidt Main Theorem}. Before arriving at this conclusion, we first need some key measure estimates which are given in Proposition~\ref{var upper bound}. The proof of \cref{var upper bound} for $d\geq 3$ is a direct consequence of  \cref{prop:primmean}, \cref{thm:primrog}, and \cref{lem:boundedunits}. However when $d=2$, the proof of \cref{var upper bound} utilizes \cref{integral over the cone} and a technical lemma giving variance bounds (\cref{Schmidt Lemme 3}).


\begin{prop}\label{var upper bound} 
{
Let $A=\prod_{p\in S} A_p$ and $B=\prod_{p\in S} B_p$ be Borel sets with positive volume such that $A_p\subseteq B_p \subseteq \Q_p^d$ for each $p\in S$.
Let $\one_A$ and $\one_B$ be {the indicator functions} of $A$ and $B$, respectively.
}

\begin{enumerate}
\item For $d\ge 3$, there is $C_d>0$ such that 
\[
\int_{\SG_d/\Gamma_d}
\left(\widehat{\one_A}(\sg \Z_S^d)- \frac 1 {\zeta_S(d)}\vol_S(A) \right)^2 d\mu_d(\sg)
\le C_d \vol_S(A).
\] 
\item {$d\ge 3$. Recall that $\# S=s+1$. It follows that
\[
\int_{\SG_d/\Gamma_d}
\left(\widehat{\mathbf 1}_{B-A}(\sg\Z_S^d) - \frac 1 {\zeta_S(d)}\vol_S(B-A)\right)^2 d\mu_d(\sg)
\le (s+1) \:C_d \vol_S(B-A).
\]
}
\item Let $d=2$. There is a constant $r_0>1$, depending only on $S$, such that there exists $\widetilde{C}>0$ so that for all $A$ with $\frac {\vol_S(A)}{(\log\vol_S(A))^{1+s}}>r_0$,
\[\begin{split}
&\int_{C_S}
\left(\sd(\sv)\widehat{\one_A}(\sv^{1/2}\sg \Z_S^2)- \frac 1 {\zeta_S(2)}\vol_S(A) \right)^2 d\mu_2(\sg) d\sv\\
&\hspace{0.5in}\leq \widetilde{C} \vol_S(A) \left( (\log\vol_S(A))^{2+s} +\left[\sum_{p\in S}\prod_{p'\in S- \{p\}} \vol_{p'}(A_{p'})\right] \right).
\end{split}\]

\end{enumerate}
\end{prop}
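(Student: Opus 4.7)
For (1) I would expand the square $(\widehat{\one_A}-\vol_S(A)/\zeta_S(d))^2$ and integrate each term. The cross-term evaluates to $2\vol_S(A)^2/\zeta_S(d)^2$ by \cref{prop:primmean}, and the quadratic term is handled by \cref{thm:primrog} applied to $F=\one_A\otimes\one_A$; its principal term $\vol_S(A)^2/\zeta_S(d)^2$ cancels half of the cross-term, leaving only the diagonal contribution $\zeta_S(d)^{-1}\sum_{k\in\Z_S^\times}\int\one_A(\bsx)\one_A(k\bsx)\,d\bsx$, which by the estimate derived inside the proof of \cref{lem:boundedunits} is bounded by a constant (depending only on $d$ and $S$) times $\vol_S(A)$. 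For (2) I would run the same argument with $\one_{B-A}$ in place of $\one_A$; the only new point is that $B-A$ is no longer a product set, which I would circumvent by dominating $\one_{B-A}\le \sum_{p\in S}\one_{E_p}$ with $E_p:=(B_p\setminus A_p)\times\prod_{p'\ne p}B_{p'}$, and applying Cauchy--Schwarz in $L^2(\SG_d/\Gamma_d)$; the per-term variance bound supplied by (1) then introduces the factor $\#S=s+1$.

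\textbf{Plan for (3) ($d=2$).} The same expansion strategy applies but now uses \cref{integral over the cone} instead of \cref{thm:primrog}. Writing $V$ for the variance on the left-hand side and expanding in the cone $C_S$ gives
\[
V=\tfrac{1}{\zeta_S(2)}\int_{(\Q_S^2)^2}\Phi_S(\det(\bsx,\bsy))\one_A(\bsx)\one_A(\bsy)\,d\bsx\, d\bsy+\tfrac{1}{2L_S\zeta_S(2)}\sum_{k\in\Z_S^\times}\int_{\Q_S^2}\one_A(\bsx)\one_A(k\bsx)\,d\bsx-\tfrac{\vol_S(A)^2}{L_S\zeta_S(2)^2}.
\]
I would then split the first integral at the threshold $\rd(\det)=r_0$ of \cref{application of Abel summation formula}. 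On the large region $\{\rd(\det)>r_0\}$, replacing $\Phi_S$ by its leading constant $1/(L_S\zeta_S(2))$ produces a main term $\vol_S(A)^2/(L_S\zeta_S(2)^2)$ that cancels the explicit negative piece, leaving three residual contributions to control.

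\textbf{The three residuals.} The diagonal sum contributes $O(\vol_S(A))$ by \cref{lem:boundedunits}. The small-determinant region $\{\rd(\det)\le r_0\}\cap(A\times A)$ I would estimate place by place: $\rd(\det(\bsx,\bsy))\le r_0$ forces $|\det(\bx_p,\by_p)|_p$ to be bounded at some place $p\in S$, which confines $\by_p$ to a slab about the line $\Q_p\bx_p$ of $O_{r_0}(1)$ one-dimensional width; integrating the remaining $s$ places out produces the bound $\sum_{p\in S}\prod_{p'\ne p}\vol_{p'}(A_{p'})$. The logarithmic correction $\int_{A\times A}\rd(\det)^{-1}\log\rd(\det)\,d\bsx\, d\bsy$ I would handle by writing $\log\rd(\det)=\sum_{p\in S}\log|\det(\bx_p,\by_p)|_p$ and, at each place, performing the change of variables $(\bx_p,\by_p)\mapsto(\bx_p,\lambda_p,\det_p)$ that linearises the singularity of $|\det|_p^{-1}$; each of the $s+1$ places contributes one $\log\vol_p(A_p)\lesssim\log\vol_S(A)$ and the distinguished place contributes an extra $\log$ from the $\log|\det|_p$ factor, giving the total power $(\log\vol_S(A))^{2+s}$.

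\textbf{Principal obstacle.} The main difficulty is the sharp tracking of the logarithmic exponent $2+s$ in the third residual together with uniform control of the cutoff at $\rd(\det)=r_0$, which is precisely why the hypothesis $\vol_S(A)/(\log\vol_S(A))^{1+s}>r_0$ appears in the statement. This assumption guarantees that for admissible $A$ the main cancellation dominates the boundary effects coming from $\{\rd(\det)\le r_0\}$, and that the error term in \cref{application of Abel summation formula} behaves cleanly as $O(\rd(\det)^{-1}\log\rd(\det))$ on the entire large region, so that the three residuals combine exactly into the bound claimed in the proposition.
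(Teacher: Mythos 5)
Your treatments of parts (1) and (3) follow the same route as the paper: for (1), expand the square, apply \cref{prop:primmean} and \cref{thm:primrog} to cancel the main terms, and bound the diagonal sum via \cref{lem:boundedunits}; for (3), use \cref{integral over the cone}, replace $\Phi_S$ by its leading constant plus error via \cref{application of Abel summation formula}, and estimate the residuals via Schmidt--Lemma--type determinant integrals (the paper's \cref{Schmidt Lemme 3}), which is what your slab/change-of-variables sketch is implicitly reconstructing. The $(\log\vol_S(A))^{2+s}$ exponent and the $\sum_p\prod_{p'\ne p}\vol_{p'}(A_{p'})$ term both emerge correctly.

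Part (2), however, has a genuine gap. You dominate $\one_{B-A}\le\sum_{p\in S}\one_{E_p}$ with the \emph{overlapping} sets $E_p=(B_p\setminus A_p)\times\prod_{p'\ne p}B_{p'}$. The inequality is true, but it does not let you transfer the variance bound from (1): variance is not monotone under pointwise domination, and the centered quantity $\widehat{\one_{B-A}}-\vol_S(B-A)/\zeta_S(d)$ is not pointwise dominated by $\sum_p\bigl(\widehat{\one_{E_p}}-\vol_S(E_p)/\zeta_S(d)\bigr)$ because $\sum_p\vol_S(E_p)$ typically exceeds $\vol_S(B-A)$ when more than one place differs. Even bounding the raw second moment $\int\widehat{\one_{B-A}}^2\le\int\bigl(\sum_p\widehat{\one_{E_p}}\bigr)^2$ leaves you with $\sum_p\vol_S(E_p)^2$ and $\sum_p\vol_S(E_p)$ in place of $\vol_S(B-A)^2$ and $\vol_S(B-A)$, which do not telescope. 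The fix is to use a \emph{disjoint} nested decomposition: set $A_0=A$, $A_{s+1}=B$, and interpolate $A_j$ by replacing $A_p$ with $B_p$ one place at a time, so each $A_{j+1}-A_j$ is a product set, the union is exactly $B-A$, and the centerings add up exactly since $\vol_S(B-A)=\sum_j\vol_S(A_{j+1}-A_j)$. Then Cauchy--Schwarz on the $s+1$ centered summands gives the factor $(s+1)$ with no overcounting.
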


The proof requires the following lemma for $d=2$.
\begin{lem}\label{Schmidt Lemme 3} For $p\in S$ let $A_p\subseteq \Q_p^2$ be a Borel set with $\vol_p(A_p)>0$ and let $\one_{A_p}$ be an indicator function of $A_p$. We have the following.
\begin{enumerate}
\item \cite[Lemma 5]{Schmidt60} For $t>0$,
\[
\int_{\{(\bx_\infty, \by_\infty)\in (\R^2)^2: |\det(\bx_\infty, \by_\infty)|_\infty \le t\}} 
\one_{A_\infty} (\bx_\infty) \one_{A_\infty} (\by_\infty) d\bx_\infty d\by_\infty \le 8t\vol_\infty(A_\infty).
\]

\item  Let $\chi_\infty$ be a non-negative, non-increasing function  on $[r_0,r_1]$, where $0\leq r_0 < r_1 \leq \infty$ for which $\int_{r_0}^{r_1} \chi_\infty(t)dt<\infty$. Then
\[
\int_{(\R^2)^2} \chi_\infty(|\det(\bx_\infty,\by_\infty)|_\infty)
\one_{A_\infty} (\bx_\infty) \one_{A_\infty} (\by_\infty)
d\bx_\infty d\by_\infty
\le 8 \vol_\infty(A_\infty)\left[r_0 \chi_\infty(r_0) + \int_{r_0}^{r_1} \chi_\infty(t) dt\right].
\]
\item For $p<\infty$ and any $t\in \Z$, 
\[
\int_{\{(\bx_p, \by_p)\in (\Q_p^2)^2: |\det(\bx_p, \by_p)|_p = p^t\}} 
\one_{A_p} (\bx_p) \one_{A_p} (\by_p) d\bx_p d\by_p \le p^t\left(1-\frac 1 {p^2}\right)\vol_p(A_p).
\]
\item Let $\one_A$ be an indicator function of a Borel set $A=\prod_{p\in S} A_p\subseteq \Q_S^2$ with $\vol_S(A)>0$. Let $\chi$ be a non-negative, non-increasing function on $[1,r]$ for some fixed $r >1$. Then
\begin{align}
&\int_{\{(\bsx, \bsy)\in(\Q_S^2)^2: |\det(\bx_p, \by_p)|_p > 1, \;\forall p\in S\}} 
\one_A(\bsx)\one_A(\bsy) \chi(\sd(\det(\bsx, \bsy))) d\bsx d\bsy\label{eq:Slemma4}\\
&\hspace{1.5in}\le  
8 \vol_S(A) \left[\prod_{p\in S_f}\left( (\log_p r )\left(1-\frac 1 {p^2}\right) \right)\right]\cdot  \left[\chi(1)  +\int_1^r \chi(t) dt\right].\nonumber
\end{align}
\end{enumerate}
\end{lem}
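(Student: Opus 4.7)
The plan is to prove the four parts sequentially, each building on the previous.

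Part (1) is precisely \cite[Lemma 5]{Schmidt60}, which I would invoke directly.

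For part (2), I would use a layer-cake decomposition of the weight. Since $\chi_\infty\colon [r_0, r_1] \to \R_{\geq 0}$ is non-increasing with maximum $M := \chi_\infty(r_0)$, write
\[
\chi_\infty(s) = \int_0^M \one_{\{s < \chi_\infty^{-1}(u)\}}\, du,
\]
where $\chi_\infty^{-1}(u) = \sup\{t \in [r_0, r_1] : \chi_\infty(t) > u\}$. Substituting into the integrand and exchanging the order of integration, part (1) applied to each level $t = \chi_\infty^{-1}(u)$ gives a factor $8\chi_\infty^{-1}(u)\vol_\infty(A_\infty)$, and the outer integration yields $\int_0^M \chi_\infty^{-1}(u)\, du = r_0\chi_\infty(r_0) + \int_{r_0}^{r_1}\chi_\infty(t)\, dt$ by the area-under-the-graph identity for monotone functions.

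Part (3) is the main technical step and requires a $p$-adic analog of part (1). For fixed $\bx_p \neq 0$, choose $\bx_p^*$ with $\det(\bx_p, \bx_p^*) = 1$ (e.g.\ $\bx_p^* = (0, 1/x_1)$ when $|x_1|_p \geq |x_2|_p$); every $\by_p$ decomposes uniquely as $\by_p = \lambda \bx_p + \mu \bx_p^*$ with $\mu = \det(\bx_p, \by_p)$, and a direct Jacobian computation gives $d\by_p = d\lambda\, d\mu$. Fubini then yields
\[
\iint_{A_p^2,\, |\det|_p = p^t} d\bx_p\, d\by_p = \int_{|\mu|_p = p^t} \left[\int_{(\bx_p,\by_p) \in A_p^2,\, \det = \mu} d\sigma_\mu \right] d\mu,
\]
where $d\sigma_\mu$ is the $3$-dimensional coarea measure on $\{\det = \mu\}$. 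Since $\vol_p(\{|\mu|_p = p^t\}) = p^t(1 - p^{-1})$, it suffices to bound the inner fiber integral by $(1 + p^{-1})\vol_p(A_p)$ uniformly in $\mu$. I would establish this by exchanging order again to express the bracket as $\int_{A_p}\tilde L(\bx_p, \mu)\, d\bx_p$ with $\tilde L(\bx_p, \mu) = \int \one_{A_p}(\lambda \bx_p + \mu \bx_p^*)\, d\lambda$, and exploiting the $\SL_2(\Z_p)$-invariance of the fiber to reduce to the model case $A_p = \Z_p^2$, where the sharp factor $(1 + p^{-1}) = \#\mathbb{P}^1(\F_p)/p$ reflects the density of primitive vectors.

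For part (4), decompose the domain over $(k_p)_{p\in S_f} \in \N^{S_f}$ with $k_p \geq 1$ by setting $|\det(\bx_p, \by_p)|_p = p^{k_p}$ on each stratum. With $K := \prod_{p \in S_f} p^{k_p}$, we have $\sd(\det(\bsx,\bsy)) = K|\det(\bx_\infty, \by_\infty)|_\infty$ on the stratum, so the integrand factors over places. Part (3) bounds the finite-place contribution by $K \prod_{p \in S_f}(1 - p^{-2})\vol_p(A_p)$; part (2) applied to the Archimedean factor with weight $\chi_\infty(s) = \chi(Ks)$ on $[1, r/K]$, after the substitution $s \mapsto Ks$, bounds that factor by $8\vol_\infty(A_\infty)[\chi(K) + K^{-1}\int_K^r\chi(u)\,du]$. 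Multiplying, each stratum contributes at most $8\vol_S(A)\prod_{p \in S_f}(1-p^{-2})[K\chi(K) + \int_K^r\chi(u)\,du]$. The monotonicity of $\chi$ gives $K\chi(K) \leq \int_0^K\chi(u)\,du \leq \chi(1)$ (extending $\chi$ by its value at $1$ on $[0,1)$), so the bracket is at most $\chi(1) + \int_1^r\chi(u)\,du$. Finally the number of tuples $(k_p)$ with $K \leq r$ is at most $\prod_{p \in S_f}\log_p r$, completing the estimate. The hardest step is the uniform fiber bound in part (3), where the $(1 + p^{-1})$ factor cannot come from naive volume comparison and requires the $p$-adic orbit structure.
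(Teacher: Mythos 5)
Parts (1), (2), and (4) of your proposal are sound: (1) is the same citation, your layer-cake treatment of (2) is equivalent to the integration by parts the paper invokes, and your stratified treatment of (4) has the same skeleton as the paper's (you apply (2) on $[1,r/K]$ rather than the paper's $[1/K, r/K]$, which is tighter but immaterial; note the inequality $\int_0^K\chi(u)\,du \leq \chi(1)$ you write is false as stated — what you want is $K\chi(K) \leq \int_0^K\chi = \chi(1) + \int_1^K\chi$, which does give the final bracket bound).

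The genuine gap is in part (3). You reduce the problem to a uniform fiber bound
\[
\int_{A_p}\tilde L(\bx_p,\mu)\,d\bx_p \;\leq\; (1+p^{-1})\vol_p(A_p)
\]
for every $\mu$ with $|\mu|_p = p^t$, and this reduction is fine (the Jacobian computation $d\by_p = d\lambda\,d\mu$ is correct). But the justification you offer — ``exploiting the $\SL_2(\Z_p)$-invariance of the fiber to reduce to the model case $A_p=\Z_p^2$'' — is not a valid argument. The set $A_p$ is an arbitrary Borel set with no relation to $\Z_p^2$ under any group action, so there is no ``reduction''; moreover, your heuristic for where the constant $1+p^{-1}$ comes from (density of primitive vectors) is misleading, since already for $A_p = \Z_p^2$ the fiber integral equals $1-p^{-2}$, not $1+p^{-1}$ — the extremal sets are long thin strips, not balls. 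The uniform fiber bound is in fact true, but proving it requires precisely the argument the paper uses: partition the fiber into $|x_2|_p\leq|y_2|_p$ (where one bounds the $\bx$-direction for fixed $\by$, getting $\vol_p(A_p)$) and $|x_2|_p\geq p|y_2|_p$ (swap roles, getting $p^{-1}\vol_p(A_p)$), exploiting the discreteness of the $p$-adic value group so that these two regions exhaust everything. Your coarea reformulation is an alternative packaging of the same estimate, but the coordinate-splitting step cannot be avoided, and without it the proof of part (3) is incomplete.
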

\begin{proof}[Proof of \cref{Schmidt Lemme 3}]
In part (2), this is an exercise in integration by parts, for which the case when $r_0=0$ and $r_1 = \infty$ is proved in \cite[Theorem 3]{Schmidt60}. 

For part (3) denote by $\bx_p={\tp{(x_1,x_2)}}$ and $\by_p={\tp{(y_1,y_2)}}$.
Let us partition the set $$\{(\bx_p, \by_p)\in (\Q_p^2)^2 : |\det(\bx_p, \by_p)|_p=p^t\}$$ into two subsets: $(\bx_p, \by_p)$ with $|x_2|_p \le |y_2|_p$, and with $\frac{|x_2|_p} p \ge |y_2|_p$, respectively. In the first case, for each fixed $\by_p$, where we may assume that $y_2\neq 0$, the volume of the set of possible $\bx_p$ is independent of $\by_p$ since the fibered sets only differ by translation:
\[\begin{split}
&\vol_p\left(\left\{\bx_p\in \Q_p^2: \begin{array}{c}
|x_1y_2-x_2y_1|_p =p^t \\
|x_2|_p \le |y_2|_p \end{array}\right\}\right)\\
&=\vol_p\left(\left\{\bx_p\in \Q_p^2: x_2\in y_2\Z_p \text{ and for each } x_2, \text{ we have } x_1\in \left(x_2\frac {y_1}{y_2}+\frac{p^{-t}}{y_2}(\Z_p-p\Z_p)\right)\right\}\right)\\
&=|y_2^{-1}|_p \:p^t\left(1 - \frac 1 p\right) |y_2|_p
=p^t \left(1- \frac 1 p\right).
\end{split}\]
Thus we have
\[\begin{split}
&\int_{\{(\bx_p, \by_p)\in (\Q_p^2)^2: |\det(\bx_p, \by_p)|_p = p^t,\; |x_2|_p \le |y_2|_p\}} 
\one_{A_p} (\bx_p) \one_{A_p} (\by_p) d\bx_p d\by_p\\
&\hspace{0.5in}\le
\int_{\by_p\in \Q_p^2} \vol_p\left(\left\{\bx_p\in \Q_p^2: \begin{array}{c}
|x_1y_2-x_2y_1|_p =p^t \\
|x_2|_p \le |y_2|_p \end{array}\right\}\right)\cdot \one_{A_p}(\by_p)  d\by_p\\
&\hspace{0.5in}=p^t\left(1-\frac 1 p\right)\vol_p(A_p).
\end{split}\]

Similarly, excluding a set of measure 0, we suppose each $\bx_p$ has $x_2\neq 0$ to obtain
\[
\int_{\{(\bx_p, \by_p)\in (\Q_p^2)^2: |\det(\bx_p, \by_p)|_p = p^t,\; \frac{|x_2|_p}{p} \ge |y_2|_p\}} 
\one_{A_p} (\bx_p) \one_{A_p} (\by_p) d\bx_p d\by_p
\le \frac{p^t}{p} \left(1-\frac 1 p \right) \vol_p(A_p).
\]
The result of (3) follows from combining the above two inequalities.

For part (4), since $\chi$ is defined on $[1,r]$, we partition as follows:
\begin{equation*}
\eqref{eq:Slemma4}=\sum_{t_1=1}^{\lfloor \log_{p_1} r \rfloor}\cdots \sum_{t_s=1}^{\lfloor \log_{p_s} r \rfloor}
\int_{\left\{(\bsx, \bsy)\in (\Q_S^2)^2:\scriptsize \begin{array}{c}
|\det(\bx_{p_j}, \by_{p_j})|_{p_j}=p_j^{t_j},{ \;\forall} p_j\in S_f\\
\frac 1 {p_1^{t_1}\cdots p_s^{t_s}}<|\det(\bx_\infty, \by_\infty)|_\infty \le \frac {r} {p_1^{t_1}\cdots p_s^{t_s}}\end{array}\right\}}
\one_A(\bsx)\one_A(\bsy) \chi(\sd(\det(\bsx, \bsy))) d\bsx d\bsy.
\end{equation*}
Disintegrating place by place and applying (3), the above expression gives
\[\begin{split}
\eqref{eq:Slemma4}&\leq\sum_{t_1=1}^{\lfloor \log_{p_1} r \rfloor}\cdots \sum_{t_s=1}^{\lfloor \log_{p_s} r \rfloor}
\left[\prod_{j=1}^s p_j^{t_j}\left(1- \frac 1 {p_j^2}\right) \vol_{p_j}(A_{p_j})\right]\\
&\hspace{0.5in}
\int_{\left\{
\frac 1 {p_1^{t_1}\cdots p_s^{t_s}}< |\det(\bx_\infty, \by_\infty)|_\infty \le \frac r {p_1^{t_1}\cdots p_s^{t_s}}\right\}}
\one_{A_\infty} (\bx_\infty) \one_{A_\infty} (\by_\infty)
\chi\left(p_1^{t_1}\cdots p_s^{t_s}\cdot|\det(\bx_\infty, \by_\infty)|_\infty\right)
d\bx_\infty d\by_\infty.
\end{split}\]

Applying (2) with $\chi_\infty(t)=\chi(p_1^{t_1}\cdots p_s^{t_s} \cdot t)$ for each $(t_1, \ldots, t_s)$ and each $t\in \Big[\frac {1} {p_1^{t_1}\cdots p_s^{t_s}},\frac {r} {p_1^{t_1}\cdots p_s^{t_s}}\Big]$ and zero elsewhere. With a change of variables, the above gives 
\[\begin{split}
\eqref{eq:Slemma4}&\le 8\vol_\infty(A_\infty)\sum_{t_1=1}^{\lfloor \log_{p_1} r \rfloor}\cdots \sum_{t_s=1}^{\lfloor \log_{p_s} r \rfloor}\\
&\hspace{1.4in}
\left[\prod_{j=1}^s p_j^{t_j}\left(1- \frac 1 {p_j^2}\right) \vol_{p_j}(A_{p_j})\right]
\left[\frac{\chi(1)}{p_1^{t_1}\cdots p_s^{t_s}}  +  \int_{\frac 1 {p_1^{t_1}\cdots p_s^{t_s}}}^{\frac r {p_1^{t_1}\cdots p_s^{t_s}}} \chi(p_1^{t_1}\cdots p_s^{t_s} t) dt\right]\\
&\le 8 \vol_S(A) \left[\prod_{p\in S_f} \log_p(r)\left(1- \frac 1 {p^2}\right) \right]\cdot \left[\chi(1)  +\int_1^r \chi(t) dt\right]\end{split}\]
which completes the lemma.
\end{proof}

\begin{proof}[Proof of \cref{var upper bound}]
The case when $d\ge 3$ is a direct consequence of combining \cref{prop:primmean}, \cref{thm:primrog}, and the proof of \cref{lem:boundedunits}. 

For (2), we construct $s+2$ sets $A_0,\ldots, A_{s+1}$ by accumulatively changing one place in the product between $A$ and $B$. That is, we set $A_0=A$ and $A_1=B_\infty\times A_{p_1}\times \cdots \times A_{p_s}$. Then for $j=2,\ldots, s$ set
$$A_j = B_\infty\times\cdots \times B_{p_{j-1}} \times A_{p_j} \times \cdots \times A_{p_s},$$
and let $A_{s+1} = B$. 
Notice that each $A_{j+1}-A_j$, where $j=0, \ldots, s$ is the product of Borel sets in $\Q_p^d$ for $p\in S$ and 
\[
B-A={  \bigcup_{j=0}^s} (A_{j+1}-A_j).
\] 
The lemma follows from (1), using the Cauchy–Schwarz inequality and the fact that for each $j=0, \ldots, s$, we have $\vol_S(A_{j+1}-A_j)=\vol_S(A_{j+1}) - \vol_S(A_j)$.

Now let us concentrate on the case when $d=2$. Recall we defined $\mu_{C_S} = \mu_2\times \vol_S$. By \cref{integral over the cone} and the fact that $\mu_{C_S}(C_S) = 1/L_S$,
\[\begin{split}
&\int_{C_S}
\left(\sd(\sv)\widehat{\one_A}(\sv^{1/2}\sg \Z_S^2)- \frac 1 {\zeta_S(2)}\vol_S(A) \right)^2 d\mu_2(\sg) d\sv \\
&=\frac 1 {\zeta_S(2)} \int_{(\Q_S^2)^2}
\left(\Phi_S(\det(\bsx, \bsy))-\frac 1 {L_S\zeta_S(2)}\right)
\one_A(\bsx)\one_A(\bsy) d\bsx d\bsy
+\frac{1}{2L_S\zeta_S(2)}\sum_{k\in \Z_S^\times} \int_{\Q_S^2} \one_A(\bsx)\one_A(k\bsx) d\bsx.
\end{split}\]
From the proof of \cref{lem:boundedunits} there is a constant $c_1>0$, depending only on $d=2$ and $S$, such that
\begin{equation}\label{eq:Varineq:eq1}
\frac{1}{2 L_S \zeta_S(2)}\sum_{k\in \Z_S^\times} \int_{\Q_S^2} \one_A(\bsx)\one_A(k\bsx) d\bsx \le c_1\vol_S(A).
\end{equation}

{For the other integral, set $r_0 >1$ as in \cref{application of Abel summation formula} and define a function $\chi$ on $r\in (0,\infty)$ as an upper bound of $|\Phi_S(r) - 1/(L_S\zeta_S(2))|$ by}
	$$\chi(r) =\begin{cases}c_2, &\text{if } 0 <r\leq r_0;\\  
	c_3 r^{-1}\log r, &\text{if } r_0 < r \leq \frac{\vol_S(A)}{(\log \vol_S(A))^{1+s}};\\
	c_4(\vol_S(A))^{-1}(\log\vol_S(A))^{2+s}, &\text{if } r > \frac{\vol_S(A)}{(\log \vol_S(A))^{1+s}},
		\end{cases}$$
  where one can choose that $c_2, c_3, c_4>0$, {depending only on $r_0$}, so that $\chi$ is non-increasing.
		Notice that the upper bound $\chi$ is chosen to optimize the exponents when estimating error terms, and it reduces to the case of \cite[\S 9]{Schmidt60} when $s=0$.
		
Thus we now have the following upper bound partitioned into two types of integrals:
\begin{align}
&\frac 1 {\zeta_S(2)} \int_{(\Q_S^2)^2}
\left(\Phi_S(\det(\bsx, \bsy))-\frac 1 {L_S\zeta_S(2)}\right)
\one_A(\bsx)\one_A(\bsy) d\bsx d\bsy\nonumber\\
& \leq \int_{(\Q_S^2)^2} \chi(\sd(\det(\bsx, \bsy))) \one_A(\bsx)\one_A(\bsy) d\bsx d\bsy \nonumber\\
&\leq \norm{\chi}_{\sup} \sum_{p\in S} \int_{\left\{(\bsx,\bsy)\in (\Q_S^2)^2: \abs{\det(\bx_p, \by_p)}_p \leq r_0\right\}} \one_A(\bsx)\one_A(\bsy) d\bsx d\bsy \label{eq:1leq}\\
&+ \int_{\left\{(\bsx,\bsy)\in (\Q_S^2)^2: \abs{\det(\bx_p, \by_p)}_p >r_0,\,\forall \, p\in S\right\}}\chi(\sd(\det(\bsx, \bsy))) \one_A(\bsx)\one_A(\bsy) d\bsx d\bsy.\label{eq:allgeq}
\end{align}
We first consider the case of \eqref{eq:1leq}. Fix $p\in S$. When $p=\infty$ we apply \cref{Schmidt Lemme 3} (1), and when $p<\infty$, we apply \cref{Schmidt Lemme 3} (3) and sum over all $t$ with $-\infty \leq t \leq \lfloor \log_p(r_0)\rfloor$, which gives
\begin{align*}
	\eqref{eq:1leq} &\leq C_1\sum_{p\in S}\vol_p(A_p) \prod_{p'\in S-\{p\}} \vol_{p'} (A_{p'})^2 = C_1 \vol_S(A) \left[\sum_{p\in S}\prod_{p'\in S- \{p\}} \vol_{p'}(A_{p'})\right].
\end{align*}

We now consider the case of \cref{eq:allgeq}. Assuming that $\frac {\vol_S(A)}{(\log\vol_S(A))^{1+s}}>r_0>1$, we can bound all determinants below by $1$. We apply \cref{Schmidt Lemme 3} (4) on $\left[1,\frac {\vol_S(A)}{(\log\vol_S(A))^{1+s}}\right]$ for $\chi$ to obtain, 
\begin{align*}
\eqref{eq:allgeq} &\leq 8 \vol_S(A) \left[\prod_{p\in S_f} \left(\log_p\frac{\vol_S(A)}{(\log \vol_S(A))^{1+s}}\right)\left(1- \frac{1}{p^2}\right)\right] \left[{ c_2} + \int_1^{\frac{\vol_S(A)}{(\log \vol_S(A))^{1+s}}} \chi(t)\,dt\right]\\
&\hspace{.25in}+c_4 \vol_S(A)(\log\vol_S(A))^{2+s}\\
&\leq 8 \vol_S(A) \left[\prod_{p\in S_f} \left(\log_p(\vol_S(A))\right)\left(1- \frac{1}{p^2}\right)\right]\left[{ c_2r_0} + \frac{c_3}{2} \left(\log \frac{\vol_S(A)}{(\log \vol_S(A))^{1+s}}\right)^2\right]\\
&\hspace{.25in}+c_4 \vol_S(A)(\log\vol_S(A))^{2+s}\\
&\leq C_2 \vol_S(A)(\log\vol_S(A))^{2+s},
\end{align*}
where in the last line we note that $C_2$ depends only on the set $S$ (and the function $\chi$), and we used
$$\prod_{p\in S_f} \log_p(x) = \frac{(\log x)^s}{\prod_{p\in S_f} \log(p)}.$$

 We conclude the case when $d=2$ by setting $\widetilde{C}=\max\{C_1, C_2\}$.
\end{proof}

We now prove \cref{Schmidt Main Theorem}.
\begin{proof}[Proof of Theorem~\ref{Schmidt Main Theorem} (1)]
Let $d\geq 3$.

Denote $K=\min_{p\in S_f} \min\{T_p: \T=(T_p)_{p\in S}\in \mathcal T\}$. Enlarging $\mathcal T$ by adding appropriate Borel sets $A_\T$ if necessary, we may assume that $\T\in \mathcal T$ whenever there is $\T'\in \mathcal T$ for which $\T\succeq \T'$.


Fix $\alpha >1$ and $0<\beta<1$ to be chosen later. We will use the Borel--Cantelli Lemma to show the following is a null set:
	\begin{equation}\label{eq:3BCgoal}
	{ \limsup_{\scriptsize\substack{\prod_{p\in S}T_p  \to \infty\\\T= (T_p)_{p\in S}\\ T_p\geq K,\; \forall p\in S}}} \left\{\sg \in \SG_d/\G_d: \abs{\widehat{\one_{A_\T}}(\sg \Z_S^d) - \frac{1}{\zeta_S(d)} \vol_S(A_\T)} > \vol_S(A_{\T})^\beta  \right\}
	\end{equation}
	To optimize the error term, we will interpolate between sets of volume $k^\alpha$ and sets of volume $(k+1)^\alpha$ for $k\in \N$. Notice that for any $A_1\subseteq A \subseteq A_2 \subseteq \Q_S^d$,
\begin{equation}\label{eq:3interp}
\begin{split}
&\left|\widehat{\one_{A}}(\sg\Z_S^d)-\frac 1 {\zeta_S(d)} \vol_S(A)\right|-\frac 1 {\zeta_S(d)} \vol_S(A_2-A_1)\\
&\hspace{0.2in}\le
\max\left\{
\left|\widehat{\one_{A_1}}(\sg\Z_S^d)-\frac 1 {\zeta_S(d)} \vol_S(A_1)\right|,
\left|\widehat{\one_{A_2}}(\sg\Z_S^d)-\frac 1 {\zeta_S(d)} \vol_S(A_2)\right|
\right\}.
\end{split}\end{equation}

Suppose $\sg$ is an element of the limsup set in \eqref{eq:3BCgoal} and let $\T=(T_p)_{p\in S}$ be large enough (i.e., $\prod_{p\in S}T_p$ is large enough) satisfying the following inequality
\[
\abs{\widehat{\one_{A_\T}}(\sg \Z_S^d) - \frac{1}{\zeta_S(d)} \vol_S(A_\T)} > \vol_S(A_{\T})^\beta.
\]
There is $k\in \N$ for which $k^\alpha \le \vol_S(A_\T) \le (k+1)^\alpha$. Then, there are $\T_1$ and $\T_2$ such that $\T_1 \preceq \T \preceq \T_2$, and $\vol_S(A_{\T_1})=k^\alpha$ and $\vol_S(A_{\T_2})=(k+1)^\alpha$. For example, one can choose $\T_1=(T^{(1)}_p)_{p\in S}$ and $\T_2=(T^{(2)}_p)_{p\in S}$ such that $T^{(1)}_p=T_p=T^{(2)}_p$ for $p\in S_f$ and $T^{(1)}_\infty=\frac {k^\alpha} {\prod_{p\in S_f} T_p}$ and $T^{(2)}_\infty= \frac {(k+1)^\alpha} {\prod_{p\in S_f} T_p}$.

By \eqref{eq:3interp},
\[\begin{split}
\frac{(k+1)^\alpha - k^\alpha}{\zeta_S(d)} &+\max\left\{
\left|\widehat{\one_{A_{\T_1}}}(\sg\Z_S^d)-\frac 1 {\zeta_S(d)} \vol_S(A_{\T_1})\right|,
\left|\widehat{\one_{A_{\T_2}}}(\sg\Z_S^d)-\frac 1 {\zeta_S(d)} \vol_S(A_{\T_2})\right|
\right\}\\
 &\geq \abs{\widehat{\one_{A_\T}}(\sg \Z_S^d) - \frac{1}{\zeta_S(d)} \vol_S(A_\T)} > \vol_S(A_{\T})^\beta {  \ge} k^{\alpha\beta}\end{split}
 \]
 If $A_{\T_1}$ achieves the maximum, then for $k$ large enough (i.e., for $\T$ large enough) and $\alpha-1 < \alpha \beta$, we have
 \begin{equation}\label{eq:Ak}\left|\widehat{\one_{A_{\T_1}}}(\sg\Z_S^d)-\frac 1 {\zeta_S(d)} \vol_S(A_{\T_1})\right| \geq k^{\alpha\beta}\left(1 - \frac{1}{\zeta_S(d)} \frac{(k+1)^\alpha- k^\alpha}{k^{\alpha\beta}} \right) > 0.9 k^{\alpha\beta}.
 \end{equation}
 Otherwise $A_{\T_2}$ achieves the maximum, and again for $k$ large enough and $\alpha-1 < \alpha\beta$, we obtain
	 \begin{equation}\label{eq:Ak1}\left|\widehat{\one_{A_{\T_2}}}(\sg\Z_S^d)-\frac 1 {\zeta_S(d)} \vol_S(A_{\T_2})\right| \geq (k+1)^{\alpha\beta}\left(\frac{k^{\alpha\beta}}{(k+1)^{\alpha\beta}} - \frac{1}{\zeta_S(d)} \frac{(k+1)^\alpha- k^\alpha}{(k+1)^{\alpha\beta}} \right) > 0.9 (k+1)^{\alpha\beta}.\end{equation}

  
	 Thus for $\alpha-1 < \alpha \beta$, the limsup set in \eqref{eq:3BCgoal} is contained in
	 \begin{equation}\label{eq:3BCup}
	 \begin{split} &\limsup_{\scriptsize \substack{
	 k\in \N\\
	 k\to\infty}} \left\{\sg \in \SG_d/\G_d: \abs{\widehat{\one_{A_\T}}(\sg \Z_S^d) - \frac{1}{\zeta_S(d)}k^\alpha} > 0.9 k^{\alpha \beta}\;\text{for some}\; { A_\T\;\text{with}\;\vol_S(A_\T)=k^\alpha}\right\}. \\
	 \end{split}
	 \end{equation}
	
	Applying Chebyshev's inequality along with \cref{var upper bound} to \eqref{eq:3BCup}, and since there are at most $\left(\prod_{p\in S_f} {  \log_p (k/K)^\alpha}\right)$ total number of $A_\T$ in $\mathcal F$ with ${\vol_S(A_\T)}=k^\alpha$ and each $T_p \geq K$,
	\[\begin{split}
 &\mu_d\left(\left\{\sg \in \SG_d/\G_d: \abs{\widehat{\one_{A_\T}}(\sg \Z_S^d) - \frac{1}{\zeta_S(d)}k^\alpha} > 0.9 k^{\alpha \beta}{\;\text{for some}\;A_\T\;\text{with}\; \vol_S(A_\T)=k^\alpha} \right\}\right) \\
&\hspace{1in}\leq C_d'k^{\alpha(1-2\beta)}{\prod_{p\in S_f} \log_p (k/K)^\alpha}.
\end{split}\]
    By the Borel--Cantelli Lemma applied with $\alpha(1-2\beta) < -1$ we have \eqref{eq:3BCgoal} is indeed a null set. We conclude the proof by noting $\alpha = 3$ and $\beta > \frac{2}{3}$ suffices for the desired inequalities.
\end{proof}

\begin{proof}[Proof of \cref{Schmidt Prim+Cong Theorem}]
Let $\{A_{T_\infty}\}_{T_\infty\in \R_{>0}}$, $N=p_1^{k_1}\cdots p_s^{k_s}$, $\bv_0\in P(\Z^d)$ and $\delta>0$ be given. Set $S=\{p_1, \ldots, p_s\}$. Take $\mathcal T=\left\{\T=(T_p)_{p\in S}: T_\infty\in \R_{>0}\;\text{and}\;T_{p_i}=p_i^{-k_i}\;(1\le i\le s)\right\}$. For $\T\in \mathcal T$, define
\[
\SA_{\T}=A_{T_\infty}\times \prod_{i=1}^s (\bv_0+p_i^{k_i}\Z_{p_i}).
\]

Applying \cref{Schmidt Main Theorem} (1) to $\left\{\SA_{\T}\right\}_{\T\in \mathcal T}$, for almost all 
$\sg\in \SL_d(\R)\times \prod_{i=1}^s \mathcal U_{p_i},$
where $\mathcal U_{p_i}$ is an open neighborhood of $\Id$ in $\SL_d(\Q_{p_i})$ such that $$\mathcal U_{p_i}, \;\mathcal U_{p_i}^{-1}\subseteq \Id+p_i^{k_i}\:\Mat_{d}(\Z_p),$$
it holds that 
\[
\#\left(\sg P(\Z_S^d)\cap \SA_{\T}\right)
=\frac 1 {\zeta_S(d)} \vol_S(\SA_\T)+ O_{\sg}\left(\vol_S(\SA_\T)^\delta\right).
\]

Since for each $\sg=(g_p)_{p\in S}\in \SL_d(\R)\times\prod_{i=1}^s \mathcal U_{p_i}$,
\[
(g_\infty, g_{p_1}, \ldots, g_{p_s})P(\Z_S^d) \cap A_{\T}
=(g_\infty, \Id, \ldots, \Id)P(\Z_S^d) \cap A_{\T},
\]
one can deduce that for almost all $g_\infty \in \SL_d(\R)$,
\begin{equation}\label{eq: prim+cong 1}
\#\left((g_\infty, \Id, \ldots, \Id) P(\Z_S^d)\cap \SA_{\T}\right)
=\frac 1 {\zeta_S(d)N^d} \vol_\infty(A_{T_\infty})+ O_{g_\infty,N}\left(\vol_S(\SA_{T_\infty})^\delta\right).
\end{equation}
Note above that the factor of $N^d$ comes from a direct computation of the volume of the $p$-adic sets for $p\in S_f$. We now consider the left hand side of \cref{eq: prim+cong 1}.

Observe that elements of $(g_\infty, \Id, \ldots, \Id)P(\Z_S^d)$ are
\[\left\{
\bv\in P(\Z_S^d): g_\infty\bv\in A_{T_\infty}\;\text{and}\; \bv\in \bv_0+p_i^{k_i}\Z_{p_i}\;(1\le i\le s)\right\}.
\]
Since $\bv\in \Z_{p_i}^d$ for $1\le i\le s$, we have that $\bv \in \Z^d$ (recall that $\Z[1/p]\cap \Z_p=\Z$). Since $\bv\in \Z^d_{p_i}-p_i\Z^d_{p_i}$ (from the choice $\bv_0\in P(\Z^d)$), $\bv$ is not divided by any $p\in S_f$, hence one can conclude that $\bv\in P(\Z^d)$.
It is also easy to check that $\bv\equiv \bv_0\;\mod\; p_i^{k_i}$ for $1\le i\le s$ if and only if $\bv\equiv \bv_0\;\mod\; N=p_1^{k_1}\cdots p_s^{k_s}$. Therefore the left hand side of \cref{eq: prim+cong 1} is equal to the number of $\bv\in P(\Z^d)$ for which $g_\infty \bv\in A_{T_\infty}$ and $\bv\equiv \bv_0\;\mod\; N$, completing the proof.
\end{proof}

\begin{proof}[Proof of \cref{Schmidt Main Theorem} (2)]  Set $d=2$.
From \cref{var upper bound} (3) and by Chebyshev inequality, there is $\widetilde{C}'>0$ such that for all sufficiently large $\T_\ell$, 
\[\begin{split}
&\mu_{C_S} \left(\left\{\sv^{1/2} \sg\in C_S : \begin{array}{c}
\left|\sd(\sv) \widehat{\one}_{A_{\T_\ell}}(\sv^{1/2}\sg\Z_S^2) - \frac 1 {\zeta_S(2)} \vol_S(A_{\T_\ell})\right|\ge\\
\sum_{p\in S} \vol_p((A_\T)_p)^{\delta_1} \prod_{p'\in S-\{p\}} \vol_{p'}((A_{\T_\ell})_{p'}) + \vol_S(A_{\T_\ell})^{\delta_2}
\end{array}
\right\}\right)\\[0.1in]
&\le \frac
{\widetilde{C}\sum_{p\in S} \vol_p((A_{\T_\ell})_p)\prod_{p'\in S-\{p\}} \vol_{p'} ((A_{\T_\ell})_{p'})^2 + {\widetilde{C}'} \vol_S(A_{\T_\ell})^{1+\delta'}}
{\left(\sum_{p\in S} \vol_p((A_{\T_\ell})_p)^{\delta_1}\prod_{p'\in S-\{p\}} \vol_{p'} ((A_{\T_\ell})_{p'}) + \vol_S(A_{\T_\ell})^{\delta_2}\right)^2}\\[0.1in]
&\le \frac
{\widetilde{C}\sum_{p\in S} \vol_p((A_{\T_\ell})_p)\prod_{p'\in S-\{p\}} \vol_{p'} ((A_{\T_\ell})_{p'})^2 + {\widetilde{C}'} \vol_S(A_{\T_\ell})^{1+\delta'}}
{\sum_{p\in S} \left(\vol_p((A_{\T_\ell})_p)^{\delta_1}\prod_{p'\in S-\{p\}} \vol_{p'} ((A_{\T_\ell})_{p'})\right)^2 + \left(\vol_S(A_{\T_\ell})^{\delta_2}\right)^2}\\[0.1in]
&\le \widetilde{C}\sum_{p\in S} \vol_p((A_{\T_\ell})_p)^{1-2\delta_1}
+\widetilde{C}'\vol_S(A_{\T_\ell})^{1+\delta'-2\delta_2}.
\end{split}\]
By our assumption for $(\T_\ell)$, $\delta_1$ and $\delta_2>0$,
the result follows from the Borel--Cantelli lemma.
\end{proof}

\begin{proof}[Proof of \cref{thm:starshaped}]
We first consider the case $S=\{\infty\}$. In this case, \cite[Theorem 1]{Schmidt60} states that the error term is given by 
$$O(\vol_S(\T A)^\frac{1}{2} \log(\vol_S(\T A)) \psi(\log(\vol_S(\T A)))$$
for a positive nonincreasing function $\psi$ on $\mathbb{R}_{\geq 0}$ so that $\int_0^\infty \psi^{-1} < \infty$. By considering $\psi$ with $\psi(s) = s^2$ for $s\geq 1$ and $\psi(s) = 1$ for $s<1$, we obtain the formula in \cref{thm:starshaped}. The proof strategy requires reducing the theorem for those $\T A$ with $\vol_S(\T A) \in \mathbb{N}$ and then applying \cite[Lemmas 2 and 3]{Schmidt60}.

For general $S$, we can use the same function $\psi$, and a similar proof by only needing to adapt the two lemmas from \cite{Schmidt60}. Namely, we can first reduce the theorem statement to those $\T A$ with $\vol_S(\T A)\in \vol_S(A)\N$.


For general $S$, one can reduce the theorem for those $\T A$ such that $\vol_S(\T A)\in \vol_S(A)\N$. For each $T\in \N$, set
\[
K_T=\left\{(\T_1, \T_2)\in \Big(\R_{>0}\times \prod_{p\in S_f} p^{(\N \cup \{0\}}\Big)^2: \begin{array}{c}
\T_1 \preceq \T_2;\quad T_p^{(1)}=T_p^{(2)}\;\text{for}\; \forall p\in S_f;\\
0\le \Big(\hspace{-0.03in}\prod\limits_{p\in S} T_p^{(1)}\hspace{-0.03in}\Big)^d\hspace{-0.05in}=u2^t < \Big(\hspace{-0.03in}\prod\limits_{p\in S} T_p^{(2)}\hspace{-0.03in}\Big)^d\hspace{-0.05in}= (u+1)2^t\le 2^T\\
\text{for some non-negative integers $u$ and $t$}\end{array}\hspace{-0.05in}\right\}.
\]
Applying \cref{var upper bound} (2) the analog of \cite[Lemma 2]{Schmidt60} is
\begin{equation}\label{Sch lem 2}\begin{split}
\mathrm{SV}_T(A):=&\sum_{(\T_1, \T_2)\in K_T} \int_{\SG_d/\Gamma_d} 
\left(\widehat \one_{\T_2 A-\T_1 A} - \frac 1 {\zeta_S(d)} \vol_S(\T_2 A -\T_1 A)\right)^2 d\mu_d(\sg)\\
&\hspace{2in}\le (s+1)C_d (T+1)2^T\vol_S(A)\cdot \prod_{p\in S_f} \log_p (2^{T/d}).
\end{split}\end{equation}
Here we note that $\prod_{p\in S_f} \log_p(2^{T/d})$ is the upper bound of the number of $(\T_1, \T_2)\in K_T$ for which $\left(\prod_{p\in S} T_p^{(1)}\right)^d=u2^t$.

For the analog of \cite[Lemma 3]{Schmidt60} we apply \cref{Sch lem 2} to get
\begin{equation}\label{Sch lem 3}\begin{split}
&\mu_S\left(\left\{\sg\Gamma_d\in \SG_d/\Gamma_d: 
\mathrm{SV}_T(A)>(T+1)2^T \Big(\prod_{p\in S_f}\log_p (2^{T/d})\Big) \psi(T\log 2-1)\vol_S(A)\right\}\right)\\
&\hspace{2in}<(s+1)C_d\psi^{-1}(T\log 2-1).
\end{split}\end{equation}

When we follow the identical argument with the proof of \cite[Theorem 1]{Schmidt60} using \cref{Sch lem 3} instead, and $\psi(s)=s^2$. In doing so we verify that the complement of the limsup set of the set given in \eqref{Sch lem 3} over all $T\in \N$ is a {full-measured} set of $\SG_d/\Gamma_d$ satisfying the formula in \cref{thm:starshaped} .
\end{proof}

\subsection{Khintchine--Groshev Analogs}\label{sec:KGproofs}

In the proof of the \cref{1-d primitive Khintchine-Groshev Thm}, we will briefly follow footprints of the idea used in \cite[Section 4]{Han22}, which was introduced in \cite{AGY21} with gentle modification to the case when $m=n=1$.

\begin{proof}[Proof of \cref{1-d primitive Khintchine-Groshev Thm}]
Note that 
\[
\widehat{N}_{\psi,x}(\T)=\# \:\su_{\sx} P(\Z_S^2) \cap E_{\psi}(\T),
\;\text{where}\;\su_{\sx}=\left(\begin{array}{cc}
1 & \sx \\
0 & 1 \end{array}\right)\]
and $V_\psi(\T)=\vol_S(E_\psi(\T))$.

The trick used here is to reduce the ``almost all $\su_\sx$''-statement from the ``almost all $\sg$''-statement of \cref{Schmidt Main Theorem} (2) using an approximating technique.  
For this, let us take a decreasing sequence $(\eps_\ell)_{\ell\in \N}$ converging to $0$ and define
$\psi^{\pm}_{\ell}=(\psi^{\pm}_{p,\ell})_{p\in S}$ by
\[
\psi^{\pm}_{p,\ell}(|\sy|_p)
=\left\{
\begin{array}{cl}
(1+\eps_\ell)^{\pm 1} \psi_{p}\left(\dfrac 1 {(1+\eps_\ell)^{\pm1}} |\sy|_p \right), &\text{for }p=\infty;\\[0.15in]
\psi_p(|\sy|_p), &\text{for }p\in S_f.
\end{array}\right.
\]
Also, we will consider sequences $(\T^{\pm}_\ell)_{\ell\in \N}$ defined by
\[
\T^{\pm}_\ell
=((1+\eps_\ell)^{\pm 1}T^{(\ell)}_\infty, T^{(\ell)}_{p_1}, \ldots, T^{(\ell)}_{p_s}),\; \ell\in \N.
\]
Notice that these sequences $(\T^{\pm}_\ell)$ also satisfies the conditions in \cref{borel-cantelli condition}, and 
it is not hard to show that $\vol_S(E_{\psi^{\pm}_\ell})(\T^{\pm}_\ell)=(1+\eps_\ell)^{\pm 2} \vol_S(E_{\psi}(\T_\ell))$.

Applying \cref{Schmidt Main Theorem} (2) to each $\psi^{\pm}_\ell$, one can deduce that for almost all $\sg\in \Interval_1\times \SL_2(\Q_S)$,
\begin{equation}\label{eq thm 5.1 (2) kg}\begin{split} 
&\rd(\det\sg) \#\left(\sg\Z_S^2 \cap E_{\psi^{\pm}_\ell}(\T^\pm_\ell)\right)
=\frac {(1+\eps_\ell)^{\pm 2}} {\zeta_S(2)} \vol_S(E_{\psi}(\T_\ell))\\
&\hspace{0.2in}+ O\left(\sum_{p\in S} \vol_p\left(E_{\psi_p}(T^{(\ell)}_p)\right)^{\delta_1}\prod_{p'\in S-\{p\}} \vol_{p'}\left( (E_{\psi_{p'}}(T_{p'}^{(\ell)})\right)\right)
+ O\left(\vol_S(E_{\psi}(\T_\ell))^{\delta_2}\right).
\end{split}\end{equation}
Note that $\Interval_1\times \SL_2(\Q_S)$ can be decomposed as
\[
\left\{\left(\begin{array}{cc}
\sv^{1/2} & 0 \\
0 & \sv^{  1/2}
\end{array}\right): \sv\in \Interval_1\right\}\cdot
\left\{\left(\begin{array}{cc}
\sa & 0 \\
\sb & \sa^{-1}
\end{array}\right): \begin{array}{c}
\text{invertible } \sa\in \Q_S,\\ 
\sb\in \Q_S\end{array}\right\}\cdot
\left\{\su_\sx : \sx \in \Q_S \right\}.
\]

Let us denote $\sh(\sv, \sa, \sb)=\left(\begin{array}{cc}
\sv^{1/2} & 0 \\
0 & \sv^{  1/2}
\end{array}\right)\left(\begin{array}{cc}
\sa & 0 \\
\sb & \sa^{-1}
\end{array}\right)$ so that {a generic} element of $\Interval_1\times \SL_2(\Q_S)$ can be expressed as $\sg=\sh(\sv, \sa, \sb)\su_{\sx}$.
For each $\ell\in \N$, set
\[
C_S(\eps_\ell)
:=\left\{{  \sh}(\sv, \sa, \sb) :
\begin{array}{c}
v_\infty \in (\frac {\eps_\ell} {16}, 1];\\
v_p\in 1+L_p\Z_p
\end{array}
,\;
\begin{array}{c}
|a_\infty|_\infty^{\pm1}\le 1+\frac {\eps_\ell} 4;\\
a_p \in \Z_p-p\Z_p
\end{array}
\;\text{and}\;
\begin{array}{c}
|b_\infty|_\infty\le 1+\frac {\eps_\ell} 8;\\
b_p \in \Z_p.
\end{array}
 \right\}
\] 
so that for any element $\sh\in C_S(\eps_\ell)$, we have that $$E_{\psi^-_\ell}(\T^-_\ell) \subseteq \sh E_\psi(\T_\ell) \subseteq E_{\psi^+_\ell}(\T^+_\ell).$$

Since $C_S(\eps_\ell)\{\su_\sx: \sx\in \Q_S\}$ is open in $\Interval_1\times \SL_2(\Q_S)$, one can find a sequence $(\sh_\ell=\sh(\sv_\ell, \sa_\ell, \sb_\ell))_{\ell\in \N}$ such that for each $\ell\in \N$, the asymptotic formula \cref{eq thm 5.1 (2) kg} holds for $\sh_\ell \su_\sx$ for almost all $\sx\in \Q_S$. 
Therefore one can find a full-measure set of $\Q_S$ whose element $\sx$ satisfies \cref{eq thm 5.1 (2) kg} for $\sh_\ell \su_\sx$, $\forall \ell\in \N$.

For such $\sx\in \Q_S$, since $\delta_1, \delta_2 <1$, it follows that
\[\begin{split}
&\lim_{\ell\rightarrow \infty}
\left|
\frac {\widehat{N}_{\psi,\sx}(\T_\ell)}{V_{\psi}(\T_\ell)/\zeta_S(2)}-1
\right|
=\left|\frac{\# \sh_\ell (\su_\sx P(\Z_S^2) \cap E_{\psi}(\T_\ell))}{\vol_S(E_{\psi}(\T_\ell))/\zeta_S(2)}-1\right|\\
&\le \lim_{\ell\rightarrow \infty}
\max
\left\{\left|\frac{\#\sh_\ell \su_\sx P(\Z_S^2) \cap E_{\psi^{+}_\ell} (\T^+_\ell)}{\vol_S(E_{\psi}(\T_\ell))/\zeta_S(2)}-1\right|, 
\left|\frac{\#\sh_\ell \su_\sx P(\Z_S^2) \cap E_{\psi^{-}_\ell} (\T^-_\ell)}{\vol_S(E_{\psi}(\T_\ell))/\zeta_S(2)}-1\right|\right\}\\
&\le \lim_{\ell\rightarrow \infty} 3\eps_\ell 
+O\left(\sum_{p\in S} \vol_p\left(E_{\psi_p}(T^{(\ell)}_p)\right)^{\delta_1-1}\right)
+O\left(\vol_S(E_{\psi}(\T_\ell))^{\delta_2-1}\right)
=0.
\end{split}\]
\end{proof}

\begin{proof}[Proof of \cref{primitive Khintchine-Groshev Thm}]
The two cases are almost identical with those of Theorem 1.3 and Theorem 1.4 in \cite[Section 4]{Han22} with $N=1$ and $d=m+n$, respectively, except we use \cref{Schmidt Main Theorem} (1) instead of \cite[Theorem 4.1]{Han22}.
\end{proof}

\begin{proof}[Proofs of \cref{1-d primitive Khintchine-Groshev Thm real} and \cref{quan K--G thm with prim+cong}]
The results follow when we apply the same argument (with $S=\{\infty\}$) in the proof of \cref{1-d primitive Khintchine-Groshev Thm} replacing the use of \cref{Schmidt Main Theorem} with real versions, and using different target sets $E_\psi(T)$. We also use that there is a two-to-one correspondence between $P(\Z^2)$ and $\Q$. In the case of \cref{1-d primitive Khintchine-Groshev Thm real}, we use \cite[Theorem 2]{Schmidt60} and
\[
E_\psi(T)=\{(x,y)\in \R\times \R: |x|_\infty \le \psi(|y|_\infty)\;\text{and}\; |y|_\infty < T\},\text{ for all } T>0.
\]
In the case of \cref{quan K--G thm with prim+cong}, we use \cref{Schmidt Prim+Cong Theorem} and set 
\[
E_{\psi}(T)=\left\{(\bx, \by)\in \R^m\times \R^n : \|\bx\|_\infty^m \le \psi(\|\by\|_\infty^n)\;\text{and}\; \|\by\|_\infty^n<T\right\},\;\forall T>0.
\]

\end{proof}




\subsection{Logarithm Laws}\label{sec:loglawsproofs}

 The proof proceeds in 3 sections: an analog of the Random Minkowski theorem in \cref{sec:randommink}, then upper bounds in \cref{sec:upperlog}, finishing with lower bounds in \cref{sec:lowerlog}.

\subsubsection{Random Minkowski} \label{sec:randommink}

\begin{proof}[Proof of \cref{prop:random minkowski}]
	When $d\geq 3$, set $g_A: \SG_d/\Gamma_d \to \R$ to be $g_A = 1- \mathbf{1}_{\{\sg\Gamma_d\in \SG_d/\Gamma_d: \sg P(\Z_S^d) \cap A = \emptyset\}}$. Then by \cref{prop:primmean}, the Cauchy--Schwarz inequality and \cref{var upper bound}
	$$\frac{\vol_S(A)^2}{\zeta_S(d)^2} = \left(\int_{\SG_d/\Gamma_d} \widehat{\mathbf{1}_A}\,d\mu_d\right)^2 \leq \norm{\widehat{\mathbf{1}_A}}^{2}_{\SG_d/\Gamma_d,2} \norm{g_A}_{\SG_d/\Gamma_d,1}\leq \norm{g_A}_{\SG_d/\Gamma_d,1} \left(C_d\vol_S(A) + \frac{\vol_S(A)^2}{\zeta_S(d)^2}\right),$$
{where we use the fact that $\|g_A\|_{\SG_d/\Gamma_d,2}^2=\|g_A\|_{\SG_d/\Gamma_d, 1}$ provided that $g_A$ is an indicator function.}
	Thus
	$$ \mu_d(\{\sg\Gamma_d\in \SG_d/\Gamma_d: \sg P(\Z_S^d) \cap A = \emptyset\}) \leq \frac{C_d \vol_S(A)}{\left(C_d\vol_S(A) + \frac{\vol_S(A)^2}{\zeta_S(d)^2}\right)} \leq  \frac{C_d \zeta_S^2(d)}{\vol_S(A)}.$$

For $d=2$, we extend the function $g_A$ to a function on $C_S$, also denoted by $g_A$, by $g_A(\sv^{1/2}g\Gamma_2)=g_A(\sv^{1/2}g\Gamma_2)$ for all $\sv\in I_1$.
By \cref{integral over the cone} and the Cauchy-Schwarz inequality applied to the probability space $(C_S, L_S\mu_{C_S})$,
\[\begin{split}
\frac {\vol_S(A)^2}{\zeta_S(2)^2}
&=\left(L_S\int_{C_S} \rd(\sv)\widehat{\mathbf{1}_A}(\sv^{1/2} \sg \Gamma_2) g_A(\sg\Gamma_2)\,d\mu_2 \,d\sv \right)^2\\
&\le
\left(L_S\int_{C_S} \left(\rd(\sv)\widehat{\mathbf{1}_A}(\sv^{1/2} \sg \Gamma_2)\right)^2\,d\mu_2 \,d\sv \right)
\left(L_S\int_{C_S} g_A(\sg\Gamma_2)^2\,d\mu_2 \,d\sv \right).
\end{split}\]
It follows that
\[
L_S\int_{C_S} g_A(\sg\Gamma_2)^2\,d\mu_2 \,d\sv
=L_S\frac 1 {L_S}\int_{\SG_2/\Gamma_2} g_A(\sg\Gamma_2)^2 d\mu_2=\norm{g_A}_{\SG_2/\Gamma_2, 1}.
\]
Applying \cref{var upper bound},
\[
\frac {\vol_S(A)^2}{\zeta_S(2)^2}
\le 
\left(\frac {\vol_S(A)^2}{\zeta_S(2)^2} + \widetilde{C} L_S \vol_S(A) E(A)\right)
\norm{g_A}_{\SG_2/\Gamma_2, 1},
\]
which leads to
\[
\mu_2(\{\sg\Gamma_2\in \SG_2/\Gamma_2: \sg P(\Z_S^2) \cap A = \emptyset\}) 
\leq
\frac {\widetilde{C} L_S \zeta_S(2)^2 E(A)}{\vol_S(A)}.
\]
\end{proof}

\subsubsection{Upper bounds}\label{sec:upperlog}
We next pursue \cref{upper bound of AM}, an upper bound for \cref{thm:loglawAM}.

\begin{proof}[Proof of \cref{upper bound of AM}]
Take any countable sequence $(\eps_r)\subseteq \R_{>0}$, where $\eps_r\rightarrow 0$ as $r\rightarrow \infty$.
It suffices to show that for each $r$, the set of $\Lambda$ satisfying
\[
\limsup_{|\sx|\rightarrow\infty}
\frac{\log(\alpha_1(\su_\sx \Lambda))}{\log\left(\prod_{p\in S} |x_p|_p\right)} \le \frac{1}{d}+\eps_r
\] 
is a full measure set.

Since the map $\sx\mapsto \log(\alpha_1(\su_\sx \Lambda))$ is upper semicontinuous and the map $\sx\mapsto \log(\prod_{p\in S} |x_p|^{-1}_p)$ is continuous on $\prod_{p\in S} (\Q_p -\{0\})$, and since we want to obtain the supremum limit, it is enough to consider those $\sx$ with $|\sx|_p\ge 1$ for $\forall p\in S$, i.e. of the form
\begin{equation}\label{eq upper AM}
\sx=\frac {m} {p_1^{k_1}\cdots p_s^{k_s}} \in \Z_S
\quad\text{s.t.}\quad
\left\{\begin{array}{l}
k_1, \ldots, k_s \in \Z_{\ge 0};\\
\left|{m}/({p_1^{k_1}\cdots p_s^{k_s}})\right|_\infty \ge 1;\\
m\in \N_S\;\text{or}\; -m \in \N_S.
\end{array}\right.
\end{equation}
Notice that $\prod_{p\in S} |x_p|_p=|m|_\infty$.

We want to obtain the upper bound of 
\[
\mu_d\left(\left\{
\Lambda\in \SG_d/\Gamma_d: 
\log(\alpha_1(\su_\sx \Lambda)) > \left(\frac 1 d + \eps_r\right) \log |m|_\infty.
\right\}\right),
\]
where $\sx=m/(p_1^{k_1}\cdots p_s^{k_s})$.
Note that $\alpha_1(\su_\sx \Lambda)>|m|_\infty^{1/d+\eps_r}$ if there is $\bv\in \su_\sx\Lambda$ for which $\prod_{p\in S} \|\bv\|_p < |m|_\infty^{-(1/d+\eps_r)}$. It is a fact that there is $D=D(d, S)>0$ such that by multiplying an element of $\Z_S^\times$ to $\bv$, one can find $\bw\in \su_\sx \Lambda$ such that
\[
\|\bw\|_p < Dm^{-\frac 1 {s+1} \left(\frac 1 d +\eps_r\right)},\; \forall p\in S,
\] 
where $s+1$ is the cardinality of $S=\{\infty, p_1, \ldots, p_s\}$. This implies that if we set $$B=B\left(0, D|m|_\infty^{-\frac 1 {s+1} \left(\frac 1 d +\eps_r\right)}\right)\times \prod_{p\in S_f} \left\lfloor D|m|_\infty^{-\frac 1 {s+1} \left(\frac 1 d +\eps_r\right)}\right\rfloor_p {\Z_p^d},$$ where $\lfloor t \rfloor_p$ is the largest number in $\{p^z:z\in \Z\}$ less than or equal to $t$, we have
\[
\widetilde{\mathbf{1}}_B(\su_\sx\Lambda)\ge \widehat{\mathbf{1}}_B(\su_\sx\Lambda)\ge 1.
\]
Hence applying the mean value formula in \cref{prop:primmean},
\[
\mu_d\left(\left\{
\Lambda\in \SG_d/\Gamma_d: 
\log(\alpha_1(\su_\sx \Lambda)) > \left(\frac 1 d + \eps_r\right) \log |m|_\infty
\right\}\right) \ll_{d,S} \frac 1 {|m|_\infty^{1+d\eps_r}}.
\]

Summing over $\sx\in \Z_S$ with $|\sx|_p\ge 1$ for $\forall p\in S$, following the notation in \cref{eq upper AM}, we have
\[\begin{split}
&\sum_{\scriptsize \begin{array}{c}
\sx\in \Z_S\\
\text{as in }\cref{eq upper AM}
\end{array}}
\mu_d\left(\left\{
\Lambda\in \SG_d/\Gamma_d: 
\log(\alpha_1(\su_\sx \Lambda)) > \left(\frac 1 d + \eps_r\right) \log |m|_\infty
\right\}\right)\\
&\ll 2 \sum_{k_1\in \Z_{\ge 0}} \cdots \sum_{k_s\in \Z_{\ge 0}} 
\sum_{\scriptsize \begin{array}{c}
m\in \N_S\\
\end{array}}
\frac 1 {m^{1+\eps_r d}}\\
&\le 2 \sum_{m \ge 1} 
\frac 1 {m^{1+\eps_r d}} \#\left\{(k_1, \ldots, k_s)\in \N^s: p_1^{k_1} \cdots p_s^{k_s} \le m \right\}\\
&\le 2 \sum_{m \ge 1} 
\frac 1 {m^{1+\eps_r d}} \prod_{p\in S_f} \log_p m
<\infty.
\end{split}\]
Hence we achieve our claim by the Borel--Cantelli lemma.
\end{proof}

\subsubsection{Lower bounds} \label{sec:lowerlog} 
To proceed with \cref{lower bound of AM}, the lower bound, let us first recall some facts about unipotent one-parameter subgroups in the $S$-arithmetic setting that we need for the proof of \cref{lower bound of AM}.  To do so we just need some observations for unipotent one-parameter subgroups in $\SL_d(\Q_p)$ for a prime $p$, which mostly mimic the real case. 
Recall the matrix exponential map and the matrix logarithmic map
\[
\exp(X)=\sum_{i=0}^\infty \frac {X^i} {i!}
\quad\text{and}\quad
\log(X)=\sum_{i=1}^\infty (-1)^{i+1} \frac {(X-\Id)^i} {i}
\]
on the space of $d\times d$ matrices, as formal power series. Note that the convergence of the exponential and logarithmic maps with respect to $p$-adic numbers behaves differently than the real case, but we avoid this subtlety since we are considering unipotent and nilpotent matrices. 
In particular, one is the inverse of the other.

Let $U_t$ be a unipotent one-parameter subgroup, i.e. a continuous homomorphism from $t\in \Q_p$ to $U_t\in \SL_d(\Q_p)$.
Then since the map $t\mapsto \log U_t$ is a continuous homomorphism in $(\Mat_{d}(\Q_p), +)$ and $\Z$ is dense in $\Z_p$, by evaluating at $t=1/p^k$ for $\forall k\in \N$, $t\in \frac 1 {p^k} \Z$, and then $t\in \frac 1 {p^k} \Z_p$, we can construct the nilpotent element $N:=\log U_1\in \Mat_{d}(\Q_p)$ for which $\log U_t=Nt$. 

In other words, for any unipotent one-parameter subgroup, one can find a nilpotent $N\in \Mat_{d}(\Q_p)$ so that 
\[
U_t= \exp(Nt),\; \forall t\in \Q_p.
\]
Using the Jordan-canonical form for nilpotent $N$, we further obtain that there is $h\in \SL_d(\Q_p)$ so that $h^{-1}U_th$ is a diagonal of block matrices where each block is of the form 
\begin{equation}\label{Jordan block}
\begin{pmatrix}1 & t & t^2/2 &t^3/6 &\cdots &t^k/k!\\
0&1& t & {t^2}/2 &\cdots & t^{(k-1)}/(k-1)!\\
\vdots & 0 & \ddots & \ddots & & \vdots\\
\vdots & \cdots & 0 & \ddots & \ddots& \vdots\\
0& \cdots & \cdots &0 & 1 & t\\
0&\cdots &&\cdots & 0 & 1
\end{pmatrix}, \quad t\in \Q_p.
\end{equation}

Therefore, for any unipotent one-parameter subgroup $\su_\t$, there is $\sh\in \SL_d(\Q_S)$ so that each $p$-adic matrix of $\su'_t:=\sh^{-1}\su_\t \sh$ is a diagonal of Jordan blocks, where the sizes of blocks can be different in each place. By replacing the variable $\Lambda$ by $\Lambda'=\sh^{-1}\Lambda$ and using the fact that $\log\alpha_1(\su'_\t \Lambda')=\log\alpha_1(\sh^{-1}\su_\t \Lambda)$ differs from $\log\alpha_1(\su_\t \Lambda)$ by a uniform bound, it suffices to show that for $\mu_d$-almost every $\Lambda'$
\[
\limsup_{|\sx|\rightarrow \infty}
\frac {\log(\alpha_1(\su'_\sx \Lambda'))} {\log(\prod_{p\in S} |x_p|_p)} \ge \frac 1 d.
\]

Therefore, from now on, we may assume $\su_\t$ consists of matrices of Jordan normal form.

\begin{proof}[Proof of \cref{lower bound of AM}]
Since the key ideas are contained in the case of $d=3$, we first consider $d=3$, and then generalize to the case of $d\ge 3$. From the difference of assumptions for the cases of $\dim\ge 3$ and $\dim=2$ respectively in \cref{cor:limit rand minkowski}, the latter case demands more process, which we address at the end of the proof.

\vspace{0.1in}
\noindent\textbf{The case of $\dim=3$.}\quad
In this case, $\su_\t$ consists of matrices of the form
\begin{equation}\label{3-dim lower bound type}
\left(\begin{array}{ccc}
1 & 0 & 0 \\
0 & 1 & t_p \\
0 & 0 & 1
\end{array}\right)\;
\quad\text{or}\quad
\left(\begin{array}{ccc}
1 & t_{p} & {t_{p}^2}/2 \\
0 & 1 & t_{p} \\
0 & 0 & 1
\end{array}\right),\quad t_{p}\in \Q_{p}.
\end{equation}

It suffices to show that for any $\delta>0$, it follows that for almost every $\Lambda$, there is a sequence $(\t_\eta)_{\eta\in \N}$ of $\Q_S$ so that 
\begin{equation}\label{claim: lower bound}
\log(\alpha_1(\su_{\t_\eta} \Lambda)) \ge \left(\frac 1 d -\delta\right)\log\left(\prod_{p\in S} |\t_\eta|_p\right).
\end{equation}

Fix a constant $\eps>0$ depending on $\delta>0$ to be chosen later. 
Consider a family of sets $A_{\SK}=\prod_{p\in S} A_{\SK}^{(p)}$ for $\SK=(k_\infty, p_1^{k_{p_1}}, \ldots, p_s^{k_{p_s}})\in \N\times \prod_{p\in S_f} p^{3\N}$, where for each $p\in S$,
\begin{enumerate}
\item if the $p$-adic element of $\su_\t$ is the 
matrix on the left in \cref{3-dim lower bound type}, $A_{\SK}^{(p)}$ is the set of $(x_p,y_p,z_p)\in\Q_p^3$ given by either
\begin{equation}\label{eq:A_k}
\begin{cases}
0<|x_\infty|_\infty \le k_\infty^{-1/3},\\
|y_\infty|_\infty \le k_\infty^{2/3}, {\frac{y_\infty}{z_\infty}<0}\\
k_\infty^{-1/3-\eps} < { |z_\infty|_\infty} < k_\infty^{-1/3+\eps}; 
\end{cases}
\text{or if } p<\infty, \quad
\begin{cases}
x_p\in p^{k_p/3}\Z_p-p^{(k_p/3)+1}\Z_p, \\
{  y_p}\in p^{-2k_p/3}\Z_p-p^{-(2k_p/3)+1}\Z_p, \\
z_p\in p^{k_p/3}\Z_p-p^{(k_p/3)+1}\Z_p,
\end{cases}
\end{equation}
\item if the $p$-adic element of $\su_\t$ is the 
matrix on the right in \cref{3-dim lower bound type}, then $A_{\SK}^{(p)}$ is given by the same inequalities as \cref{eq:A_k} except for the first coordinate:
\[
0<\left|x_\infty- \frac {y_\infty^2} {2z_\infty}\right|_\infty \le k_\infty^{-1/3}
\quad{\text{ or if } p<\infty,\quad}
x_p - \frac {y_p^2} {2z_p} \in p^{k_p/3}\Z_p - p^{(k_p/3)+1}\Z_p.
\]
\end{enumerate} 
Note that in both cases, the volumes are
\[
\vol_\infty (A_{\SK}^{(\infty)})=4(k_\infty^{\eps}-k_\infty^{-\eps})
\quad\text{and}\quad
\vol_p (A_{\SK}^{(p)})=(1-1/p)^{3},
\]
so that $\vol_S(A_{\SK})\rightarrow \infty$ when $k_\infty\rightarrow \infty$.

	Select an increasing sequence  $(\SK_\eta)_{\eta\in \N} = (k_{\infty,\eta}, p_1^{k_{p_1,\eta}}, \ldots, p_s^{k_{p_s,\eta}})_{\eta \in \mathbb{N}}$ such that for each $p\in S$, $k_{p,\eta}\rightarrow \infty$ as $\eta\rightarrow \infty$. Since $k_{\infty,\eta} \to \infty$, we can apply \cref{cor:limit rand minkowski} to obtain 
	\[
 \lim_{\eta \to\infty }\mu_3(\Lambda: P(\Lambda) \cap A_{\SK_\eta}=\emptyset)=0.
\]
Thus replacing $\eta$ with a subsequence if necessary we can further assume
\[
\sum_{\eta=1}^\infty \mu_3(\Lambda : P(\Lambda) \cap A_{\SK_\eta}=\emptyset)<\infty.
\]



By the Borel--Cantelli lemma, for almost every $\Lambda$, there exists $\eta_0 = \eta_0(\Lambda)$ so that for all $\eta\geq \eta_0$, there is some $\bsv_\eta = {\tp{(\sx_\eta,\sy_\eta,\sz_\eta)}} \in P(\Lambda) \cap A_{\SK_\eta}$. 

Now, let us fix such a lattice $\Lambda$. Since the sequence of $\eta$ is increasing and the set $A_{\SK_\eta}$ is strictly shrinking in the first component {of $\mathbb{Q}_S^3$}, by passing to the subsequence if necessary, the sequence
$\bsv_\eta$ consists of distinct points and each $p$-adic component of $\sz_\eta$ is nonzero.
For each $\eta$, take $\t_\eta={ -\sy_\eta/\sz_\eta}=(-y_{p,\eta}/z_{p,\eta})_{p\in S}\in \Q_S$. By passing to another subsequence if necessary, we may further assume that for each $p\in S$, $(|\t_{\eta}|_p)$ is an increasing sequence.
Moreover, the points cannot be contained in a compact set, or else the lattice would have accumulation points, so by taking a subsequence if necessary, we can choose $\bsv_\eta \in P(\Lambda) \cap A_{\SK_\eta}$ which is unbounded.


We have that the components of $\su_{\t_\eta}\bsv_\eta$ are either 
\[
\left(\begin{array}{c}
x_{p,\eta}\\
0\\
z_{p,\eta}
\end{array}\right)
\quad\text{or}\quad
\left(\begin{array}{c}
x_{p,\eta}- \frac{y_{p,\eta}^2}{2z_{p,\eta}}\\
0\\
z_{p,\eta}\end{array}\right).
\]

Notice that 
\[
\log\alpha_1(\su_{\t_\eta}\Lambda)
\ge \log\left(\prod_{p\in S}\|\su_{\t_\eta}\bsv_{\eta}\|^{-1}_p\right)
= \sum_{p\in S} \log\|\su_{\t_\eta}\bsv_{\eta}\|^{-1}_p
\]
so that one can obtain \cref{lower bound of AM} if we show lower bounds on the logarithm in each place.  
	
	In the $\infty$ place, 
 \[\begin{split}
	\log\norm{\su_{\t_\eta} \bsv_{\eta}}_\infty^{-1}
	&\ge\log\frac{1}{|\sx_{\eta}|_\infty + |\sz_{\eta}|_{\infty}}
	\quad\text{or}\quad
	\log\frac{1}{\left|\sx_{\eta}-\frac {\sy_{\eta}^2} {2\sz_{\eta}}\right|_\infty + |\sz_{\eta}|_{\infty}}\;\text{(respectively)}\\ 
	&\geq \log \frac{1}{2k_{\eta, \infty}^{-1/3+\eps}}\\
	&\ge  \frac{1/ 3 - \eps}{1+\eps}\log|\t_{\eta}|_\infty - \log2.\\
	\end{split}\]
	Similarly, for each $p\in S_f$, we have that
    \[\begin{split}
	\log\norm{\su_{\t_\eta}\bsv_{\eta}}^{-1}_p
	&\ge \log \frac 1 {\max\{|\sx_{\eta}|_p, |\sz_{\eta}|_p\}}
	\quad\text{or}\quad
	\log \frac 1 {\max\left\{\left|\sx_{\eta}-\frac {\sy_{\eta}^2}{2\sz_{\eta}}\right|_p, |\sz_{\eta}|_p\right\}}\;\text{(respectively)}\\
   &=\frac 1 3 \log p^{k_{p,\eta}}
   =\frac 1 3 \log |\t_{\eta}|_p.
	\end{split}\] 

	Thus we have
\begin{align*}\log(\alpha_1(\su_{\t_\eta}\Lambda))
&\geq\left[{ \frac{1/ 3 - \eps}{1+\eps}}
 \log|\t_{\eta}|_{\infty} -\log2+  \frac{1}{3} \sum_{p\in S_f}\log(|\t_\eta|_p) \right]\\
& \geq \left[-\log2+ { \frac{1/ 3 - \eps}{1+\eps}}
\sum_{p\in S}\log(|\t_\eta|_p) \right].\\
\end{align*}
Dividing by $\log \prod_{p\in S} |\t_\eta|_p$, we have
\begin{align*}
\frac{\log(\alpha_1(\su_{\t_\eta}\Lambda))}{ \log \prod_{p\in S} |\t_\eta|_p}
&\geq  \left[-\frac{\log2}{\log \prod_{p\in S} |\t_\eta|_p} +{  \frac{1/ 3 - \eps}{1+\eps}}
\right] \geq \frac{1}{3} -\delta,
\end{align*}
where the last inequality holds when
\[
{  \frac{1}{3} \left(1-\frac{1-3\epsilon}{1+\epsilon}\right) \leq \frac{\delta}{2}  }
\quad\text{and}\quad
\frac{\log2}{\log \prod_{p\in S} |\t_\eta|_p }\leq \frac{\delta}{2}.
\]
Since the product of the norms of our $\t_\eta$ diverges to infinity as $\eta$ goes to infinity, one can take $\eta_0>0$ so that the above is true for all $\eta > \eta_0$.
We now have the set of full measure for each $\delta$, and taking the intersection of these sets yields the full measure set where we have the desired lower bound.

\noindent\textbf{The general case $\dim \ge 3$.}\quad
In this case, each $p$-adic component of $\su_\t=(U_{t_p})_{p\in S}$ consists of Jordan blocks of the form provided in \cref{Jordan block}. Note that the number of blocks and their sizes would be different in each place. However, as we can see in the proof for the 3-dimensional case, it is irrelavant to our argument, since we will define the set $A_{\SK}$ for $\SK\in \N\times \prod_{p\in S_f}p^{d\N}$ as the product of the set $A_{\SK}^{(p)}$, where $A_{\SK}^{(p)}$ is determined by inequalities only relavant to the Jordan normal form $U_{t_p}$ over $\Q_p$.

For each $p\in S$, without loss of generality, we may assume the bottom block of $U_{t_p}$ has size {$\ell\geq 2$}. Thus there are polynomials $\widetilde{f}_j(\bx)$ with coefficients in ${ {t_p}^\ell/\ell!}$ for $j=1,\ldots, d-2$ and {$\ell\geq 0$} so that
$$
U_{t_p} \bx = \begin{pmatrix} \widetilde{f}_1(\bx) \\ \vdots \\ \widetilde{f}_{d-2}(\bx) \\ x_{d-1} + t_p x_d \\ x_d\end{pmatrix},
$$
where $\bx={\tp{(x_1, \ldots, x_d)}}\in \Q_p^d$.
Setting $t_p =-x_{d-1}/x_d$, we obtain rational functions $f_j(\bx)$ for $j=1,\ldots, d-2$, in the variables excluding $x_j$, and with only $x_d$ in the denominator so that
	$$U_{t_p} \bx = \begin{pmatrix} x_1 -{f}_1(\bx) \\ \vdots \\ x_{d-2} - {f}_{d-2}(\bx) \\ 0 \\ x_d\end{pmatrix}.$$
	
Now we can set $A_\SK^{(p)}$ so that in the infinite place,
\begin{enumerate}
	\item $0< |x_j - f_j(\bx)| \leq k_\infty^{-1/d}$ for $j\leq d-2$;
	\item {$|x_{d-1}| \leq k_\infty^{(d-1)/d},\, \frac{x_{d-1}}{x_d} <0$};
	\item ${|x_d|}\in [k_\infty^{-1/d-\eps} ,k_\infty^{-1/d+\eps}]$.
\end{enumerate}
And for each $p\in S_f$, we have
\begin{enumerate}
	\item $x_j - f_j(\bx) \in  p^{k_p/d}\Z_p - p^{(k_p/d)+1}\Z_p$ for $j\leq d-2$;
	\item ${  x_{d-1}} \in p^{-(d-1)k_p/d}\Z_p - p^{-((d-1)k_p/d) +1}\Z_p  $;
	\item $x_{d} \in p^{k_p/d}\Z_p - p^{(k_p/d)+1}\Z_p$.
\end{enumerate}

In this case the volume is given by
\[
\vol_S(A_\SK) = 2^{(d-1)}(k_\infty^\eps - k_\infty^{-\eps}) \prod_{p\in S_f} \left(1-\frac 1 p\right)^{d}
\]
so that $\vol_S(A_{\SK})\rightarrow \infty$ as $k_\infty\rightarrow \infty$, hence when $\SK\rightarrow \infty$. The proof now proceeds exactly as in case 1 with $3$ replaced by $d \geq 3$ and $\log(d-1)$ in place of $\log2$.

\noindent\textbf{The case $\dim=2$}.\quad
We first remark that as in \cref{cor:limit rand minkowski} for the case of $\dim\ge 3$, we didn't demand the volume $\vol_p(A_\SK^{(p)})$ for $p\in S_f$ diverging to infinity, as $k_p$ goes to infinity.
However, to use \cref{cor:limit rand minkowski} for the 2-dimensional case, we need $\vol_p(A_{\SK}^{(p)})$ diverging for all $p\in S$.

For any positive $\delta< 1/2$, choose {$\epsilon<1$ so that $\frac{1}{\epsilon}\in \N$ and  $\frac{1}{2}\left(1-\frac{1-2\epsilon}{1+\epsilon}\right) <\delta$.} The canonical unipotent one-parameter subgroup $\su_\t$ is
\[
\left(\begin{array}{cc}
1 & \t\\
0 & 1\end{array}\right),\;\t\in \Q_S.
\]

For each $\SK=(k_\infty, p_1^{k_{p_1}}, \ldots, p_s^{k_{p_s}})\in \N\times \prod_{p\in S_f} p^{(2/\eps)\N}$, define $A_\SK=\prod_{p\in S} A_{\SK}^{(p)}$ as the set of ${\tp{(y_p,z_p)}}\in \Q_p^2$ for which
\[\begin{cases}
|\sy_\infty|_\infty \le k_\infty^{1/2}, {\frac{y_\infty}{z_\infty} <0}\\
k_\infty^{-1/2-\eps} < |z_\infty| < k_\infty^{-1/2+\eps};
\end{cases}
\quad\text{or if }p\in S_f,\quad
\begin{cases}
{ y_p}\in p^{-k_p/2}\Z_p - p^{-(k_p/2)+1}\Z_p,\\
z_p\in p^{k_p(1/2-\eps)}\Z_p - p^{k_p(1/2 + \eps)}\Z_p;
\end{cases}
\]
so that each of volumes
\[
\vol_\infty\left(A_\SK^{(\infty)}\right)=2(k_\infty^\eps - k_\infty^{-\eps})
\quad\text{and}\quad
\vol_p\left(A_\SK^{(p)}\right)=\left(1-\frac 1 p\right)(p^{\eps k_p} - p^{-\eps k_p})
\]
diverges when $k_\infty,\;k_p \rightarrow \infty$, respectively so that one can proceed the argument used for general dimensional cases.
In particular, one can obtain the sequence $(\t_\eta)_{\eta}$ such that for each $p\in S$,
\[
\log\|\su_{\t_\eta}\bv_{\eta}\|_p^{-1} 
\ge \left(\frac 1 2 -\eps\right) \frac 1 {1+\eps} \log|\t_{\eta}|_p
\ge \left(\frac 1 2 - \delta\right) \log|\t_{\eta}|_p,
\]

where the last inequality follows from the choice of $\eps$ in the beginning.
\end{proof}

\bibliographystyle{alpha}
\bibliography{Sources}
\end{document}